\def\@email#1#2{%
 \endgroup
 \patchcmd{\titleblock@produce}
  {\frontmatter@RRAPformat} {\frontmatter@RRAPformat{\produce@RRAP{*#1\href{mailto:#2}{#2}}}\frontmatter@RRAPformat}
  {}{}
}%
\newtheorem{theorem}{Theorem}[section]
\newtheorem{corollary}[theorem]{Corollary}
\newtheorem{lemma}[theorem]{Lemma}
\newtheorem{proposition}[theorem]{Proposition}
\newtheorem{definition}[theorem]{Definition}
\theoremstyle{definition}
\newtheorem{example}[theorem]{Example}
\newtheorem{remark}[theorem]{Remark}
\newtheorem{assumption}{Assumption}
\newcommand{\commentCS}[1]{#1}
\newcommand{\new}[1]{#1}
\newcommand{\R}{\mathbb{R}}
\newcommand{\C}{\mathcal{C}}
\renewcommand{\H}{\mathcal{H}}
\newcommand{\B}{\mathcal{B}}
\newcommand{\SpanX}{\mathrm{Span}\{k(x,\cdot) : x \in X\}}
\newcommand{\N}{\mathbb{N}}
\newcommand{\Kf}{\overline{K}_f}
\newcommand{\CN}{\mathbb{C}}
\renewcommand{\L}{\mathrm{L}}
\newcommand{\RKBS}{\left(\B,\B',\langle\cdot,\cdot\rangle,k\right)}
\begin{document}

\preprint{}

\title[Koopman and Perron-Frobenius Operators on reproducing kernel Banach spaces]{Koopman and Perron-Frobenius Operators on reproducing kernel Banach spaces}
\author{Corbinian Schlosser}
 \email{cschlosser@laas.fr}
\affiliation{LAAS-CNRS, 7 avenue du colonel Roche, F-31400 Toulouse; France}%

\author{Isao Ishikawa}%
 \email{ishikawa.isao.zx@ehime-u.ac.jp}
\affiliation{Center for Data Science, Ehime University, Matsuyama, 790-8577, Japan}%
\affiliation{Center for Advanced Intelligence Project, RIKEN, Tokyo, 103-0027, Japan}

\author{Masahiro Ikeda}
 \affiliation{Center for Advanced Intelligence Project, RIKEN, Tokyo, 103-0027, Japan}
 \affiliation{Department of Mathematics, Keio University, Yokohama, 223-8522, Japan}
 \email{masahiro.ikeda@riken.jp}
 
\date{\today}

\begin{abstract}
Koopman and Perron-Frobenius operators for dynamical systems have been getting popular in a number of fields in science these days. Properties of the Koopman operator essentially depend on the choice of function spaces where it acts. Particularly the case of reproducing kernel Hilbert spaces (RKHSs) draws more and more attention in data science. In this paper, we give a general framework for Koopman and Perron-Frobenius operators on reproducing kernel Banach spaces (RKBSs). More precisely, we extend basic known properties of these operators from RKHSs to RKBSs and state new results, including symmetry and sparsity concepts, on these operators on RKBS for discrete and continuous time systems.
\end{abstract}

\maketitle



\section{Introduction}
Koopman operators, as well as reproducing kernel Hilbert spaces, have been getting popular in various fields in science these days. Both contribute widely to applications\cite{AppliedKoopmanism, EDMD, KoopmanControlMilan, SS16, RKHSGlobalOptimization} but provide also theoretical insights to several fields\cite{EngelNagel, MezicMauroy, SS16, RKHSProbability} and many others.
Koopman operators and reproducing kernel Hilbert space techniques aim at translating the corresponding problems into a functional analytic setting and borrowing methods from there.

The Koopman (or composition) operator for a function $f:X \rightarrow X$ on a set $X$ is defined by $Tg := g \circ f$ for functions $g:X \rightarrow \mathbb{C}$ in a suitable function space. This lifting procedure results in a linear operator and the main idea is to transfer properties of the dynamical system to properties of the Koopman operator and vice versa.

The adjoint of the Koopman operator is called the Perron-Frobenius operator. The properties of the dynamical system that can be observed via the Koopman or Perron-Frobenius operator depend strongly on the choice of function space. In the study of topological properties of dynamical systems, the Koopman operator on the space of continuous functions was extensively studied.\cite{KariKuester,AppliedKoopmanism} For ergodic theory, the space $\mathrm{L}^2(\mu)$ of square integrable functions with respect to an invariant measure $\mu$ appears naturally and the Koopman operator has proven to be a powerful tool to enable the use operator theory.\cite{OperatorTheoreticAspectsOfErgodicTheory} For instance, spectral theory lies at the core of forecasting via dynamic mode decomposition. With regard to applications the space $\mathrm{L}^2(\mu)$ requires cautious treatment\cite{MilanDataDrivenKoopman} because it typically lacks the property of bounded (or even well defined) point evaluation, which for applications causes sensitivity with respect to the measurement. An RKHS on the other hand provides such a Hilbert space of functions with bounded point evaluations and additionally provides explicit access to the geometry via the kernel function. In machine learning these advantages are already widely used and have shown strong applications\cite{HIIK,SVM2008, KernelsInMachineLearning}. Recently a different application of Koopman theory to machine learning was found in speeding up the learning process\cite{dogra2020optimizing}.

To connect Koopman theory and RKBS methods, we follow the path in this direction explored for RKHS by\cite{Kawahara16, IFIHK, IIS20, AA17, CompOpAnalyticFunctions, Das, DasSpectrum, das2021reproducing,Rosenfeld,alexander2020operator, lian2021koopman,KernelMeanEmbedding,mezic2020spectrum}, treating several different problems. The advantage of working with an RKBS or RKHS is that it provides continuous point evaluation, i.e. robustness with respect to measurements. Secondly, it incorporates directly the underlying geometry of the RKBS leading to the fact kernel DMD\cite{Kawahara16} is the least-distance projection of the Perron-Frobenius operator on the space operators on the kernel functions\cite{Kawahara16}. \commentCS{Thirdly, in some cases the RKBS appears naturally via spectral decomposition of the Koopman operator\cite{mezic2020spectrum}.} This property flourishes when the RKBS can be chosen in such a way that its geometry has a desirable meaning for the dynamical system.

Our goal consists of providing a common framework for Koopman operators on reproducing kernel Banach spaces (RKBS), including RKHS, with a special focus on  boundedness (or at least closedness) of these operators. As in the RKHS case a dual perspective via the Perron-Frobenius operator brings advantages, in particular via its natural action on kernel functions in the RKBS. In Section \ref{sec:KoopmanOpAndRKBS} we define the central objects: The Koopman and Perron-Frobenius operator and reproducing kernel Banach spaces. In Section \ref{sec:Koopman_Op_RKBS} we introduce the Koopman and Perron-Frobenius operators on RKBS and, on one hand, we list and extend known general results for these operators resp. semigroups on RKHS to RKBS. On the other hand we give new results such as that under mild assumptions on the RKBS the Perron-Frobenius operator is not closed (Theorem \ref{ElementaryPropertiesKoopmanRKHS} 5.), but closeable whenever the Koopman operator is densely defined (under a reflexivity assumption on the RKBS, Theorem \ref{ElementaryPropertiesKoopmanRKHS} 6.). This is followed by some examples of Koopman and Perron-Frobenius operators on specific RKBS, including the fundamental case of linear systems, treatment of the space of continuous functions as RKBS and an application to the transport equation via Besov-spaces as underlying RKBS. In Section \ref{sec:ContinuousTime} we focus on continuous time systems. This leads to one-parameter semigroups of Koopman respectively Perron-Frobenius operators. We introduce their generators, present certain elements that belong to their domain and give a geometric condition for continuous time dynamical systems under which the Perron-Frobenius semigroup is uniformly bounded (hence so is the Koopman semigroup) and strongly continuous (Proposition \ref{prop:LumerPhillipsDirectApplication}). With a view to applications and computational aspects, we show in Section \ref{sec:SymmetrySparsity} that symmetry and sparsity of the dynamics can be preserved for our approach under corresponding properties of the RKBS.
    
Our investigations indicate three fundamental obstacles; first, the notion of RKBS is very general and, as a result, general results on Koopman and Perron-Frobenius operators are limited, second, not always can we let go of the regularity that comes with reproducing kernel Hilbert spaces such as reflexivity and explicit expression of the geometry in terms of the kernel and third, the kernel should be adapted to the dynamics in order to assure that the Koopman and Perron-Frobenius operators can flower out their potential.

\section{Notations}
The set of natural numbers is denoted by $\N$. For a complex number $a \in \CN$ we denote by $\overline{a}$ its complex conjugate and by $\mathcal{Re}(a)$ its real part. $\R_+$ denotes the non-negative real line $[0,\infty)$. By $\overline{B_1(0)}$ we denote the unit disc in $\CN$. The dual of a topological vector space $Y$ is denoted by $Y^*$. The domain of an operator $T$ is denoted by $D(T)$. By $T \subset S$ for operators $T,S$ we mean that $D(T) \subset D(S)$ and $Tx = Sx$ for all $x \in D(T)$. The adjoint of an operator $T$ is denoted by $T^*$ while the adjoint operator with respect to a given bilinear form is denoted by $T'$. For a compact topological space $X$, the space of continuous functions on a set $X$ is denoted by $\C(X)$ and equipped with the supremum norm. We identify the dual space of $\C(X)$ with the space of bounded Borel measures, denoted by $M(X)$. The dirac delta at a point $x \in X$ is denoted by $\delta_x \in M(X)$. For a measure $\mu$ on $X$ with corresponding sigma algebra $\Sigma$ and for $1\leq p \leq \infty$ we denote by $\mathrm{L}^p(X) = \mathrm{L}^p(X,\Sigma,\mu)$ the space of $p$ times Lebesgue integrable functions.

\section{Koopman operators and reproducing kernel Banach spaces}\label{sec:KoopmanOpAndRKBS}

In this section, we provide the definition of Koopman and Perron-Frobenius operators as well as the definition of reproducing kernel Banach spaces and some of their properties that we need in the following.

\subsection{Review of Koopman and Perron-Frobenius operators}
We start with a review of well known results on Koopman operators and point out some properties of the Koopman operator that motivate the use of RKBS for Koopman operators but also demonstrate that choosing a function space for the Koopman operator is a delicate task. This problem is to be expected since using the Koopman operator just means looking at the dynamical system from another perspective -- so the complexity does not change but the idea is to enable different tools from functional analysis.
\begin{definition}\label{Def:Koopman}
	Let $X$ be a set and $f:X \rightarrow X$. Let $Y$ be a (normed) function space on $X$. The Koopman operator $U_f:D(U_f)\rightarrow Y$ is given by
	\begin{equation}\label{equ:DefKoopman}
		U_fg := g \circ f
	\end{equation}
	where its domain $D(U_f) \subset Y$ is given by $D(U_f) = \{g \in Y: g\circ f \in Y\}$. If $U_f$ is densely defined its adjoint operator $K_f:D(K_f) \rightarrow Y^*$ with domain $D(K_f) \subset Y^*$ is called the Perron-Frobenius operator.
\end{definition}

\begin{remark}
    In case the underlying function space is $\mathrm{L}^p(X)$ for some $p\geq 1$ with respect to a given measure $\mu$ on $X$ then, for the Koopman operator to be well defined, $f$ has to satisfy $\mu(f^{-1}(N)) =0$ whenever $\mu(N) = 0$.
\end{remark}

\begin{remark}\label{rem:ContinuousTimeRemark}
    The definition of Koopman and Perron-Frobenius operators for continuous time systems is analogous, which we treat in Section \ref{sec:ContinuousTime}.
\end{remark}

\noindent
The Koopman operator is always linear and an interplay between the dynamics and the chosen function space can allow detailed descriptions of the Koopman operator as well as the dynamics.\cite{KariKuester, OperatorTheoreticAspectsOfErgodicTheory} 
For applications of the Koopman operator for problems from engineering the paper\cite{AppliedKoopmanism} was a seminal work.

\begin{example}\label{example:KoopmanCXL2}
We will mention two classical examples of function spaces the Koopman operator acts on $Y = \C(X)$ and $Y = \mathrm{L}^2(\mu)$.
\begin{enumerate}
	\item $Y = \C(X)$ for $X$ compact: If $f$ is continuous then $D(U_f) = \C(X)$ and hence $D(K_f) = \C(X)^* = M(X)$ where $M(X)$ denotes the space of signed Borel measures on $X$. It's adjoint operator $P_f: M(X) \rightarrow M(X)$, the Perron-Frobenius operator, satisfies
	\begin{equation}\label{eq:KoopmanPerronFrobAdjoint}
	    \int\limits_X g \circ f \; d\mu = \int\limits_X g \; dP_f\mu
	\end{equation}
	for any $g \in \C(X)$ and $\mu \in M(X)$. The Perron-Frobenius operator can be given more explicitly by the pushforward $P_f = f_\#$, i.e.
	\begin{equation}\label{equ:DefPerronFrobenius}
		P_f\mu(A) = \mu(f^{-1}(A))	
	\end{equation}
	for any $\mu \in M(X)$ and Borel set $A \subset X$.
The Koopman operator $U_f$ is a contractive linear algebra homomorphisms, i.e. $U_f$ is a linear operator with $\max\limits_{x \in X} |U_fg(x)| \leq \max\limits_{x \in X} |g(x)|$ and $U_f(g_1 \cdot g_2) = U_fg_1 \cdot U_f g_2$ for all $g_1,g_2 \in \C(X)$. Apart from these intriguing properties, the Koopman operator $U_f$ on $\C(X)$ has some disadvantages. Among these are that as long as the image of $f$ contains infinitely many points the operator $U_f$ is not compact, secondly, whenever there is no $n\in \N$ with $f^{n}(X) = f^{n+1}(X)$ then $\sigma(U_f) = \overline{B_1(0)}$ (see Theorem 2.7 respectively Theorem 3.0.2 in the literature \cite{Scheffold,KariKuester}) and if there is a non-periodic point $x \in X$ then $U_f$ is not a spectral operator\cite{fixman1959problems}.
	\item For $Y = \mathrm{L}^2(X,\mathcal{B},\mu)$ where $\mathcal{B}$ denotes the Borel sigma algebra on $X$, $f$ is assumed to be Borel measureable and essentially invertible and $\mu$ is an invariant measure, i.e. $K_f \mu = \mu$ for $K_f$ as in (\ref{equ:DefPerronFrobenius}): Then $U_f$ is unitary (in particular not compact if $X$ is not finite) but a spectral operator and $K_f = T_{f^{-1}}$ in this case, where $T_{f^{-1}}$ is the composition operator given by $T_{f^{-1}}g=g\circ f^{-1}$.
\end{enumerate}
\end{example}

\subsection{Reproducing kernel Banach spaces}
The concept follows the idea of reproducing kernel Hilbert spaces but aims to extend this concept to (pairs of) Banach spaces of functions where instead of an inner product we have a bilinear form on the (pair of) Banach spaces.\cite{RKBSUnified}
That means keeping the property of continuous point evaluation but at the same time allowing different geometries than these that arise from an inner product. Natural examples are any finite dimensional function spaces equipped with a bilinear form and any norm -- if this norm is not induced by a Hilbert space the corresponding space is not an RKHS but an RKBS.

\begin{definition}[Reproducing kernel Banach space\cite{RKBSUnified}]
    \label{def of RKBS}
    Let $X$ be a set and $\B$ be a Banach space of functions on $X$ (where the addition and multiplication with a scalar are defined pointwise). We call $\B$ an RKBS if the point evaluation $\B \ni g \mapsto g(x)$ is continuous for all $x \in X$.
\end{definition}

Similar to RKHS we want to relate the RKBS to kernels because they allow a more explicit description of the metric.

\begin{definition}[Kernels for RKBS\cite{RKBSUnified}]
    A quadrupel $(\B,\B',\langle \cdot,\cdot\rangle,k)$ is called an RKBS with kernel $k$ if $\B$ is an RKBS on a set $X$, $\B'$ a Banach space of functions on a set $Y$, $\langle \cdot,\cdot \rangle: \B \times \B' \rightarrow \mathbb{C}$ a continuous bilinear form and $k:X \times Y \rightarrow \mathbb{C}$ is such that for all $x \in X$ we have $k(x,\cdot) \in \B'$ and
\begin{equation}
    g(x) = \langle g,k(x,\cdot)\rangle \; \; \text{ for all } g \in \B.
\end{equation}
If further $\B'$ is also an RKBS and for all $y \in Y$ we have $k(\cdot,y) \in \B$ and
\begin{equation}
    h(y) = \langle k(\cdot,y),h\rangle \; \; \text{ for all } h \in \B'
\end{equation}
then we call $\B'$ an adjoint RKBS. If $Y = X$ we call $(\B,\B',\langle \cdot,\cdot\rangle,k)$ an RKBS on $X$ with kernel $k$.
\end{definition}

\begin{remark}
    If $(\B,\B',\langle \cdot,\cdot\rangle,k)$ is an RKBS with $\B = \B'$, where $\B'$ is a Hilbert space with scalar product $\langle\cdot,\cdot\rangle$, then $\B$ is an RKHS with kernel $k$. Vice versa an RKHS $\H$ with scalar product $\langle\cdot,\cdot\rangle$ with kernel $k$ induces naturally the RKBS $(\H,\H,\langle\cdot,\cdot\rangle,k)$.
\end{remark}

\begin{remark}
    The continuous bilinear form $\langle\cdot,\cdot\rangle$ induces a map
    \begin{equation}\label{eq:DefInducedMapB'B*}
        \phi:\B' \rightarrow \B^*, h \mapsto \langle \cdot,h\rangle
    \end{equation}
    where $\B^*$ denotes the dual space of $\B$. The map $\phi$ is continuous due to the continuity of the bilinear form and represents how far the bilinear form differs from the natural pairing of $\B$ and its dual $\B^*$. \new{In the case of RKHS the map $\phi$ is exactly the Fr\'echet-Riesz isomorphism between and $\H$ and its dual.}
\end{remark}

Next, we define the pullback kernel which allows to define an RKBS on a set $Y$ based on an RKBS on a set $X$ and an embedding $\psi:Y \rightarrow X$. As the name suggests, we pull back the RKBS structure via $\psi$.

\begin{lemma}[Pullback kernel]\label{PullbackKernelRKBS}
    Let $(\B,\B',\langle\cdot,\cdot\rangle,k)$ be an RKBS on $X$ with kernel $k$ and $\phi:Y \rightarrow X$ be a bijective map. Then $(\B_\phi,\B_\phi',\langle\cdot,\cdot\rangle_\phi,k_\phi)$ is an RKBS on $Y$ with kernel for
    \begin{equation}\label{equ:PullBackB}
        \B_\phi := \{g \circ \phi: g \in \B\} \text{ with norm } \|h\|_{\B_\phi} := \|h \circ \phi^{-1}\|_\B
    \end{equation}
    and
    \begin{equation}\label{equ:PullbackB'}
        \B_\phi' := \{g \circ \phi: g \in \B'\} \text{ with norm } \|h\|_{\B'_\phi} := \|h \circ \phi^{-1}\|_{\B'},
    \end{equation}
    with bilinear form
\begin{equation}\label{equ:PullbackBilinearform}
    \langle h,h'\rangle_\phi := \langle h \circ \phi^{-1},h' \circ \phi^{-1}\rangle
\end{equation}
and kernel
\begin{equation}\label{equ:PullbackKernel}
    k_\phi:Y \times Y \rightarrow \mathbb{K}, \; \;  k_\phi(y_1,y_2) := k(\phi(y_1),\phi(y_2)),
\end{equation}
where $\mathbb{K}$ denotes $\mathbb{R}$ or $\mathbb{C}$. Further the composition operator $T_\phi$ with $T_\phi g := g \circ \phi$ defines isometric isomorphisms between $\B$ and $\B_\phi$ and $\B'$ and $\B'_\phi$ and preservers the bilinear forms, i.e. $\langle T_\phi g,T_\phi h\rangle_\phi = \langle g,h\rangle$.
\end{lemma}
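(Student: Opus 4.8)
The entire statement reduces to the observation that the composition operator $T_\phi\colon g \mapsto g\circ\phi$ is, by construction, a bijective linear isometry, so that every ingredient of the RKBS-with-kernel structure on $Y$ can simply be transported from $X$ through $T_\phi$. The plan is therefore to establish first that $T_\phi$ is an isometric isomorphism, and then to pull back each piece of the structure one at a time: completeness and the norm axioms, continuity of the point evaluations, continuity of the bilinear form, and the reproducing property of $k_\phi$.

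First I would record that, since $\phi$ is a bijection, the map $h \mapsto h\circ\phi^{-1}$ is a two-sided inverse of $T_\phi$ at the level of functions; in particular the representation of each $h \in \B_\phi$ as $g\circ\phi$ with $g = h\circ\phi^{-1} \in \B$ is unique. Hence the prescriptions in (\ref{equ:PullBackB}) and (\ref{equ:PullbackB'}) are well defined and are genuine norms because $\|\cdot\|_\B$ and $\|\cdot\|_{\B'}$ are. By definition $\|T_\phi g\|_{\B_\phi} = \|g\|_\B$ and $\|T_\phi h\|_{\B'_\phi} = \|h\|_{\B'}$, so $T_\phi$ is a surjective linear isometry onto $\B_\phi$ (resp. $\B'_\phi$); completeness of $\B_\phi$ and $\B'_\phi$ then transfers from that of $\B$ and $\B'$, making both Banach spaces of functions on $Y$. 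The preservation of the bilinear form is an immediate substitution: for $g \in \B$, $h \in \B'$ one has $\langle T_\phi g, T_\phi h\rangle_\phi = \langle (g\circ\phi)\circ\phi^{-1}, (h\circ\phi)\circ\phi^{-1}\rangle = \langle g,h\rangle$, and continuity of $\langle\cdot,\cdot\rangle_\phi$ follows from the same identity together with the isometry property and continuity of $\langle\cdot,\cdot\rangle$.

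Next I would verify that $\B_\phi$ is an RKBS, that is, that point evaluation at each $y \in Y$ is continuous. Writing $h = g\circ\phi$ with $g = h\circ\phi^{-1}$, we have $h(y) = g(\phi(y))$, so if $C_{\phi(y)}$ bounds the evaluation at $\phi(y) \in X$ on $\B$ then $|h(y)| \le C_{\phi(y)}\|g\|_\B = C_{\phi(y)}\|h\|_{\B_\phi}$; the identical argument applies to $\B'_\phi$.

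Finally, for the kernel property I would check $k_\phi(y,\cdot) \in \B'_\phi$ and the reproducing identity. For fixed $y$ one has $k_\phi(y,\cdot) = k(\phi(y),\cdot)\circ\phi$, and since $k(\phi(y),\cdot) \in \B'$ by the kernel property of the original RKBS, this lies in $\B'_\phi$ by definition. For the reproducing property, given $h = g\circ\phi \in \B_\phi$, one computes
\begin{equation}
\langle h, k_\phi(y,\cdot)\rangle_\phi = \langle g, k(\phi(y),\cdot)\rangle = g(\phi(y)) = h(y),
\end{equation}
using first the cancellation $k_\phi(y,\cdot)\circ\phi^{-1} = k(\phi(y),\cdot)$ and then the reproducing property of $k$ on $\B$. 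The adjoint reproducing identity on $\B'_\phi$, if required, follows verbatim with the roles of the two spaces exchanged. The argument is a sequence of direct substitutions; the only point demanding real care is keeping track of the compositions with $\phi$ and $\phi^{-1}$, and in particular the cancellation $k_\phi(y,\cdot)\circ\phi^{-1} = k(\phi(y),\cdot)$, which is precisely where the bijectivity of $\phi$ is genuinely used.
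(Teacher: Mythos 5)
Your proposal is correct and follows essentially the same route as the paper's proof: both arguments hinge on $T_\phi$ being an isometric isomorphism by construction, transport the Banach space structure and bilinear form through it, and verify the reproducing property via the cancellation $k_\phi(y,\cdot)\circ\phi^{-1}=k(\phi(y),\cdot)$. Your write-up is in fact slightly more complete, since you explicitly check continuity of the point evaluations on $\B_\phi$, a step the paper leaves implicit.
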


\begin{proof}
    By definition of $\B_\phi$ and $\B'_\phi$ it follows that $T_\phi$ induces isometric isomorphisms from $\B$ to $\B_\phi$ and from $\B'$ to $\B_\phi'$. Hence $\B_\phi$ and $\B_\phi'$ are Banach spaces (of functions on $Y$). Similarly, we see that $\langle T_\phi g,T_\phi h\rangle_\phi = \langle g,h\rangle$, and in particular $\langle\cdot,\cdot\rangle_\phi$ is continuous on $\B_\phi \times \B'_\phi$. It remains to check the reproducing property, this also follows from the (pull back) definition, namely we have for all $h = g \circ \phi \in \B_\phi$ and $y \in Y$
    \begin{align*}
        h(y) &= g(\phi(y)) = \langle g,k(\phi(y),\cdot)\rangle = \langle T_\phi g,T_\phi k(\phi(y),\cdot)\rangle_\phi\\
        &= \langle h,k_\phi(y,\cdot)\rangle. \qedhere
    \end{align*}
\end{proof}


\section{Koopman and Perron-Frobenius operators on RKBS; discrete time systems}\label{sec:Koopman_Op_RKBS}

In this section we define the Koopman and Perron-Frobenius operator on RKBS $\RKBS$ on $X$ and (discrete time) dynamics $f:X\rightarrow X$. We state general properties of these operators including fundamental properties concerning their functorial nature and address continuity by investigating closedness and boundedness of these operators.

We begin this section by motivating how the Perron-Frobenius operator acts on an RKBS. This is in analogy to the case when the Koopman operator is considered on the space of continuous functions on $X$, i.e. the Perron-Frobenius operator acts on Borel measures. Similar to RKBS, the space $\C(X)$ enjoys continuous point evaluation -- which are given by the action of the dirac measures $\delta_x$ for $x \in X$. The map $x \mapsto \delta_x \in M(X)$ provides an injective embedding of $X$ into the space of regular Borel measures on $X$, analogous to the map $x \mapsto k(x,\cdot)$, and both elements $\delta_x$ and $k(x,\cdot)$ represent the point evaluation $g \mapsto g(x)$. The Perron-Frobenius operator on $M(X)$ acts on measures by (\ref{equ:DefPerronFrobenius}), in particular it maps $\delta_x$ to $\delta_{f(x)}$. Hence it seems natural for the Perron-Frobenius operator on RKBS to send $k(x,\cdot)$ to $k(f(x),\cdot)$. And this is exactly what it does.\cite{IIS20, RosenfeldSystemIdentification}

\begin{remark}
A fundamental difference between the point evaluations in $\C(X)$ and a reproducing kernel Hilbert space is that the evaluation functionals $\delta_x$ on $\C(X)$ are extremal points of the unit ball in the dual space $\C(X)^*$. This geometric characterization of the point evaluations does not have to be true in RKBSs.
\end{remark}

\subsection{Definitions of the Koopman and Perron-Frobenius operator}

Before we state the technical requirements for defining the Koopman and Perron-Frobenius operator we want to mention the following duality in defining these two operators: The functional description $U_fg := g \circ f$ can be stated easily but it can be difficult to verify for which $g$ in the function space $\B$ it holds that $U_fg$ is still an element of $\B$. On the other hand we will see that the Perron-Frobenius operator $K_f$ can be naturally defined on the dense subset $\mathrm{Span}\{k(x,\cdot) : x\in X\}$ using $K_f k(x,\cdot) := k(f(x),\cdot)$; but at the same time a functional expression for $K_f h$ for an arbitrary element $h \in \B'$ is not obvious. In Theorem \ref{ElementaryPropertiesKoopmanRKHS} we will see that this asymmetry in the behaviour of the Koopman and Perron-Frobenius operators relates to the closablility of the Perron-Frobenius operator.

As mentioned, we want to define the linear operator $K_f$ via $K_fk(x,\cdot) := k(f(x),\cdot)$. To guarantee that the map $k(x,\cdot) \mapsto k(f(x),\cdot)$ is well defined, it is useful to assume that $k(x_1,\cdot),\ldots,k(x_n,\cdot)$ are linearly independent for any $n \in \N$ and any choice of pairwise distinct point $x_1,\ldots,x_n \in X$.

\begin{assumption}\label{Assumption:SpanLinearIndep} We assume that the set $\{k(x,\cdot) : x \in X\} \subset \B'$ is linearly independent. 
\end{assumption}

\begin{remark}
    For RKHS the $\{k(x,\cdot) : x \in X\}$ is linearly independent if and only if $k$ is a strictly positive kernel, that is, the kernel $k$ satisfies for all $n \in \mathbb{N}$, $(a_1,\ldots,a_n) \in \mathbb{C}^n \setminus \{0\}, \; (x_1,\ldots,x_n) \in X^n$
    \begin{equation*}
			\sum\limits_{i,j = 1}^n a_i\overline{a}_j k(x_i,x_j) > 0.
		\end{equation*}
\end{remark}

\new{\begin{remark}\label{rem:GuidelineRKHS}
    Most of the following concepts concerning the Koopman and Perron-Frobenius operator on RKBS follow well understood machinery for Hilbert spaces, such as RKHS. Sometimes the necessary technical parts risk hiding the underlying idea even though they are in most of the cases motivated by their analog parts for Hilbert spaces, such as the adjoint operator (with respect to a bilinear form), the notion of density from (\ref{equ:DenseB}) or the embedding $\phi$ of $\B'$ into $\B^*$ from (\ref{eq:DefInducedMapB'B*}).
\end{remark}}

\begin{definition}[Koopman and Perron-Frobenius operator]\label{def:KoopmanPerronFrobeniusRKBS}
    Let $(\B,\B',\langle \cdot,\cdot\rangle,k)$ be an RKBS with kernel such that Assumption \ref{Assumption:SpanLinearIndep} is satisfied. Let $f:X \rightarrow X$ be given dynamics. The Koopman operator $U_f:\B \supset D(U) \rightarrow \B$ is defined by
    \begin{equation}\label{def:KoopmanOperator}
        U_fg:= g \circ f \; \; \; \text{ for } g \in D(U) := \{h \in \B : h \circ f \in \B\}.
    \end{equation}
    The Perron-Frobenius operator $K_f:\SpanX \rightarrow \SpanX \subset \B'$ is defined by
\begin{equation}\label{eq:def:Kf:OnPoints}
    K_fk(x,\cdot) := k(f(x),\cdot) \text{ for } x \in X
\end{equation}
    and extended linearly to $\SpanX$.
\end{definition}

Assumption \ref{Assumption:SpanLinearIndep} guarantees that extending (\ref{eq:def:Kf:OnPoints}) linearly to $\SpanX$ is well defined. We will see that the Koopman operator is the adjoint operator of the Perron-Frobenius operator, but therefore we need the Perron-Frobenius operator to be densely defined. Therefore we choose the following notion of density\cite{RKBSUnified}; a set $W \subset \B$ respectively $W' \subset \B'$ is called dense with respect to $\langle \cdot,\cdot\rangle$ if
\begin{equation}\label{equ:DenseB}
    \langle w,g \rangle = 0 \text{ for all } w \in W \text{ implies } g = 0
\end{equation}
and analog for $W'$
\begin{equation}\label{equ:DenseB'}
    \langle v,w' \rangle = 0 \text{ for all } w' \in W' \text{ implies } v = 0.
\end{equation}
 In the case of $W=\B$ and $W' = \B'$, the conditions (\ref{equ:DenseB}) and (\ref{equ:DenseB'}) state that the dual form is non-degenerate. Condition (\ref{equ:DenseB}) is a reformulation of the map $\phi$ from (\ref{eq:DefInducedMapB'B*}) being injective and (\ref{equ:DenseB'}) states that we can embed $\B$ into $(\B')^*$. Hence the conditions (\ref{equ:DenseB}) and (\ref{equ:DenseB'}) describe foremost an algebraic property of the bilinear form $\langle \cdot,\cdot\rangle$ and therefore should not be mistaken with the notion of density with respect to the topologies on $\B$ and $\B'$.

\begin{remark} \label{Assumption:SpanDense} The set $\SpanX$ is dense in $\B'$ with respect to $\langle \cdot,\cdot\rangle$ because for any $g \in \B$ with $0 = \langle g,h\rangle$ for all $h \in \B'$, we have in particular $g(x) = \langle g,k(x,\cdot)\rangle = 0$, i.e. $g$ is the zero function. 
For reflexive RKBS, in particular RKHS, we get also that $\B$ is dense in $\B$.
\end{remark}

The following result states that the Perron-Frobenius operator is adjoint (with respect to $\langle\cdot,\cdot\rangle$) to the Koopman operator. It extends the result from the RKHS setting.\cite{Rosenfeld}
Note that we use the notation $A'$ for the adjoint with respect to a bilinear form $\langle \cdot,\cdot\rangle$ (see Section \ref{app:AdjointOperators} in the appendix) and $A^*$ for the classical adjoint operator.

\begin{lemma}\label{lem:PerronFrobeniusRKBS}
    Let $(\B,\B',\langle \cdot,\cdot\rangle,k)$ be an RKBS with kernel satisfying Assumption \ref{Assumption:SpanLinearIndep}. Then $K_f$ is densely defined with respect to $\langle\cdot,\cdot\rangle$ and we have $U_f = K_f'$ (with respect to $\langle\cdot,\cdot\rangle$).
\end{lemma}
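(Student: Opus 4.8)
The plan is to dispatch the density claim using Remark~\ref{Assumption:SpanDense} directly, and then to establish $U_f = K_f'$ by the standard two-sided inclusion argument for operators, verifying separately that $U_f \subset K_f'$ and $K_f' \subset U_f$. Throughout, the workhorse is the reproducing property $g(x) = \langle g, k(x,\cdot)\rangle$ for $g \in \B$, which lets us translate abstract identities under the bilinear form into pointwise statements about functions.

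For the density part, the domain of $K_f$ is $D(K_f) = \SpanX \subset \B'$, and I would invoke Remark~\ref{Assumption:SpanDense}, which is exactly the assertion that this span is dense in $\B'$ with respect to $\langle\cdot,\cdot\rangle$ in the sense of (\ref{equ:DenseB'}). Concretely, if $v \in \B$ satisfies $\langle v, k(x,\cdot)\rangle = 0$ for all $x \in X$, then $v(x) = 0$ for all $x$ by the reproducing property, so $v = 0$; hence $K_f$ is densely defined. This density is also what guarantees that the adjoint $K_f'$ is well defined, since it forces the element $\tilde g \in \B$ witnessing membership in $D(K_f')$ to be unique.

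For the inclusion $U_f \subset K_f'$, I would take $g \in D(U_f)$, so that $g \circ f \in \B$, and compute for each $x \in X$ that $\langle g\circ f, k(x,\cdot)\rangle = (g\circ f)(x) = g(f(x))$, while on the other side $\langle g, K_f k(x,\cdot)\rangle = \langle g, k(f(x),\cdot)\rangle = g(f(x))$. By linearity these identities extend from the generators $k(x,\cdot)$ to all of $D(K_f) = \SpanX$, so $g\circ f$ is precisely the element of $\B$ realizing $\langle g \circ f, h\rangle = \langle g, K_f h\rangle$ for all $h \in D(K_f)$; hence $g \in D(K_f')$ with $K_f' g = g\circ f = U_f g$.

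For the reverse inclusion $K_f' \subset U_f$, which is the subtler direction and the step I expect to require the most care, I would take $g \in D(K_f')$ and set $\tilde g := K_f' g \in \B$, so that $\langle \tilde g, h\rangle = \langle g, K_f h\rangle$ for all $h \in D(K_f)$. Evaluating at the generators $h = k(x,\cdot)$ and using the reproducing property on $\B$ gives $\tilde g(x) = \langle \tilde g, k(x,\cdot)\rangle = \langle g, k(f(x),\cdot)\rangle = g(f(x))$ for every $x \in X$. Thus $\tilde g$ and $g \circ f$ agree pointwise on $X$, so $g \circ f = \tilde g \in \B$; this is exactly what is needed to conclude $g \in D(U_f)$ and $U_f g = g\circ f = \tilde g = K_f' g$. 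The crux here is that knowing $\tilde g \in \B$ abstractly does not by itself tell us that the composition $g \circ f$ lands in $\B$: it is the reproducing property that identifies the abstract element $\tilde g$ with the concrete function $g\circ f$ and thereby certifies membership in $D(U_f)$. Combining the two inclusions yields $U_f = K_f'$.
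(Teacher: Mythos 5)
Your proposal is correct and follows essentially the same route as the paper: density of $\SpanX$ via the reproducing property (Remark \ref{Assumption:SpanDense}), then the two inclusions $U_f\subset K_f'$ and $K_f'\subset U_f$, each obtained by testing against the generators $k(x,\cdot)$ and using $g(x)=\langle g,k(x,\cdot)\rangle$ to identify the abstract adjoint image with the concrete composition $g\circ f$. The extra care you take in the direction $K_f'\subset U_f$ matches the paper's argument that $K_f'g(x)=g(f(x))$ pointwise.
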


\begin{proof}
    Since we assume that the set $\{k(x,\cdot): x \in X\}$ is linearly independent the Perron-Frobenius operator is well defined. By Remark \ref{Assumption:SpanDense} $\SpanX$ is dense in $\B'$ with respect to $\langle\cdot,\cdot\rangle$ and by Lemma \ref{Lem:AdjointClosed} the adjoint of $K_f$ exists and is unique. To check that $U_f$ is the adjoint of $K_f$ let $g \in D(K_f')$ then for all $x \in X$ we have
    \begin{align*}
        K_f'g(x) &= \langle K_f'g,k(x,\cdot)\rangle = \langle g,K_fk(x,\cdot) \rangle = \langle g,k(f(x),\cdot)\rangle\\
        &= g(f(x)) = U_fg(x).
    \end{align*}
    This shows that $U_f$ is at least an extension of $K_f'$. For $g \in D(U_f)$, i.e. $g \in \B$ such that $g \circ f \in \B$ we have
    \begin{eqnarray*}
        \langle g,K_fk(x,\cdot) \rangle & = & \langle g,k(f(x),\cdot) \rangle = g(f(x)) = (g \circ f) (x)\\
        & = &\langle g\circ f, k(x,\cdot)\rangle = \langle U_fg,k(x,\cdot)\rangle.
    \end{eqnarray*}
    Hence we have $K_f' = U_f$.
\end{proof}

\begin{remark}
    It is shown that if an operator $K$ leaves the set $\{k(x,\cdot) : x \in X\}$ invariant then $K$ is a Perron-Frobenius operator\cite{CompOpAnalyticFunctions} (see Theorem 1.4 in the literature).
\end{remark}

\subsection{Basic properties}

In Theorem \ref{ElementaryPropertiesKoopmanRKHS} we present a collection of fundamental properties of the Koopman and Perron-Frobenius operator on RKBS. Before stating it we want to put it into context with existing results for Koopman and Perron-Frobenius operators on RKHS. The first three statements in Theorem \ref{ElementaryPropertiesKoopmanRKHS} transfer from classical arguments for composition operators; in particular it shows that the information about the dynamical system is incorporated in the Koopman operator (statement 3. in Theorem \ref{ElementaryPropertiesKoopmanRKHS}). Statement 4. is an extension from existing results for the RKHS setting\cite{RosenfeldSystemIdentification,paulsen}. Statement 6. is a transfer of a classical result for adjoint operators to the RKBS setting and statement 8. relates to kernel-mean embeddings \cite{KernelMeanEmbedding}.

\begin{theorem}\label{ElementaryPropertiesKoopmanRKHS}
Let $f,\tilde{f}:X \rightarrow X$ be two maps and $(\B,\B',\langle \cdot,\cdot\rangle,k)$ be an RKBS on $X$ with kernel $k$ satisfying Assumption \ref{Assumption:SpanLinearIndep}. Then
\begin{enumerate}
    \item $K_fK_{\tilde{f}} = K_{f\circ \tilde{f}}$
    \item if $f$ is a bijection then $K_f^{-1} = K_{f^{-1}}$
    \item If $\B$ is dense in $\B$ with respect to $\langle\cdot,\cdot\rangle$ then
\begin{equation*}
	f = g \text{ if and only if } K_f = K_g.
\end{equation*}
\item $U_f$ is closed (with respect to the weak as well as norm topology). In particular, $U_f$ is bounded if and only if $D(U_f) = \B$.
\item Assume $X$ is compact and $\B$ has the universal property (see Definition \ref{Def:UnivProperty}), $f:X\rightarrow X$ is continuous and one of the following holds
\begin{enumerate}
    \item The map $\phi$ from (\ref{eq:DefInducedMapB'B*}) is an isomorphism
    \item \label{5b} $x \mapsto k(x,\cdot) \in \B'$ is continuous
\end{enumerate}
Then, if $X$ contains infinitely many elements, the operator $K_f$ is not closed with respect to $\langle\cdot,\cdot\rangle$.
\item \label{6} If $U_f$ is densely defined then $K_f$ is closeable. If the map $\phi$ from (\ref{eq:DefInducedMapB'B*}) is an isomorphism and $\B$ is reflexive then the converse is true as well, i.e. if $K_f$ is closeable then $U_f$ is densely defined.
\item Assume the map $\phi$ from (\ref{eq:DefInducedMapB'B*}) is an isomorphism.  If $D(U_f) = \B$ then $K_f$ can be extended to a bounded operator on $\B'$. If, in addition, $\B$ is reflexive then the converse is true as well.
\item Under the assumptions of point 5., the operator $K_f$ can be extended to
\begin{equation}\label{eq:DomainExtensionK}
    D:= \left\{\int\limits_Xk(x,\cdot) \; d\mu(x): \mu \in M(X)\right\}
\end{equation}
by
\begin{equation}\label{eq:ExtentionKToMeasures}
    \bar{K}_f \left( \int\limits_Xk(x,\cdot) \; d\mu(x)\right) := \int\limits_X k(f(x),\cdot) \; d\mu(x) \text{ for } \mu \in M(X).
\end{equation}
where $M(X)$ denotes the set of Borel measures on $X$.
\end{enumerate}
\end{theorem}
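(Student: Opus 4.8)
The plan is to view the assignment $\mu\mapsto\Phi(\mu):=\int_X k(x,\cdot)\,d\mu(x)$ as a vector-valued kernel mean embedding $\Phi:M(X)\to\B'$, to establish that it is well defined and \emph{injective}, and then to recognize \eqref{eq:ExtentionKToMeasures} as the transport of the pushforward $f_\#$ through $\Phi$. The first task is to give meaning to the integral in \eqref{eq:DomainExtensionK}. The universal property ensures that $\B$ is dense in $\C(X)$, and since point evaluations are continuous on $\B$ the inclusion $\B\hookrightarrow\C(X)$ has closed graph, hence is bounded: $\|g\|_\infty\le C\|g\|_\B$ for some $C$ and all $g\in\B$. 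I would then define $\Phi(\mu)\in\B'$ separately under the two sub-hypotheses. Under 5(b) the map $x\mapsto k(x,\cdot)$ is continuous on the compact set $X$, hence bounded and Bochner integrable against the finite measure $\mu$, and $\Phi(\mu)$ is that Bochner integral. Under 5(a) the functional $g\mapsto\int_X g\,d\mu$ is bounded on $\B$ (by the above estimate and finiteness of $\mu$), so it is an element of $\B^*$; since $\phi$ from (\ref{eq:DefInducedMapB'B*}) is an isomorphism there is a unique $\Phi(\mu)\in\B'$ representing it, and the notation $\int_X k(x,\cdot)\,d\mu$ is legitimized by the Dirac case $\mu=\delta_x$, where it returns $k(x,\cdot)$. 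In both cases $\Phi(\mu)$ is pinned down by the single identity
\begin{equation*}
\langle g,\Phi(\mu)\rangle=\int_X g\,d\mu\qquad\text{for all }g\in\B,
\end{equation*}
which is all that the rest of the argument uses.

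The crux is injectivity of $\Phi$, and this is precisely where the universal property is indispensable. If $\Phi(\mu)=\Phi(\nu)$, then pairing with an arbitrary $g\in\B$ gives $\int_X g\,d\mu=\int_X g\,d\nu$; by density of $\B$ in $\C(X)$ this identity persists for all $g\in\C(X)$, and the identification $\C(X)^*=M(X)$ then forces $\mu=\nu$. Consequently each $h\in D$ admits a \emph{unique} representing measure, so \eqref{eq:ExtentionKToMeasures} is unambiguous and $\bar{K}_f$ is well defined on $D$. I expect this to be the main obstacle, for a subtle reason: one cannot prove well-definedness by pairing with $g\in\B$ alone, since the natural manipulation produces the integrand $g\circ f$, and $g\circ f$ need not belong to $\B$ (it lies only in $\C(X)$). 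It is exactly the passage from $\B$ to $\C(X)$ supplied by the universal property that repairs this gap.

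It remains to verify that \eqref{eq:ExtentionKToMeasures} produces an operator on $D$ that extends $K_f$. First, $\int_X k(f(x),\cdot)\,d\mu(x)$ is a bona fide element of $\B'$ by the same reasoning as for $\Phi(\mu)$: under 5(b) the map $x\mapsto k(f(x),\cdot)$ is continuous because $f$ is, and under 5(a) the functional $g\mapsto\int_X g\circ f\,d\mu$ is bounded since $g\circ f\in\C(X)$. Combining the characterizing identity with the change-of-variables formula for the pushforward gives, for every $g\in\B$,
\begin{equation*}
\Big\langle g,\int_X k(f(x),\cdot)\,d\mu(x)\Big\rangle=\int_X g(f(x))\,d\mu(x)=\int_X g\,d(f_\#\mu)=\langle g,\Phi(f_\#\mu)\rangle,
\end{equation*}
so that $\bar{K}_f\Phi(\mu)=\Phi(f_\#\mu)$; as $f$ is continuous and $X$ compact, $f_\#\mu\in M(X)$ and hence $\bar{K}_f h\in D$. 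Finally, taking $\mu=\sum_i a_i\delta_{x_i}$ yields $\Phi(\mu)=\sum_i a_i k(x_i,\cdot)$, which ranges over all of $\SpanX$, and $\bar{K}_f\Phi(\mu)=\sum_i a_i k(f(x_i),\cdot)=K_f\Phi(\mu)$; thus $\bar{K}_f$ restricts to $K_f$ on $\SpanX$, completing the verification that it is the asserted extension.
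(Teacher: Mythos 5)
Your treatment of item 8 is correct and is essentially the paper's own argument, cleanly repackaged: your embedding $\Phi(\mu)=\int_X k(x,\cdot)\,d\mu$ is the paper's $\phi(i^*\mu)$ in case (a) and its Bochner integral in case (b); injectivity of $\Phi$ via the universal property together with Riesz--Markov is exactly how the paper establishes well-definedness; and your identity $\bar{K}_f\Phi(\mu)=\Phi(f_{\#}\mu)$ is the paper's formula $\bar{K}_f(\phi(i^*\mu))=\phi(i^*P_f\mu)$. Your observation that well-definedness cannot be checked by pairing with $g\in\B$ alone, because the manipulation produces $g\circ f$, which need only lie in $\C(X)$, is a genuine subtlety and you resolve it the right way.

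The gap is one of scope: the statement comprises eight assertions and you have proved only the eighth. Items 1--3 are short algebraic computations on $\SpanX$ and could be granted as routine, but items 4--7 require arguments of an entirely different nature that your kernel-mean-embedding machinery does not touch: item 4 rests on $U_f=K_f'$ being an adjoint, hence closed, plus the closed graph theorem; item 6 uses the closed extension $U_f'$ of $K_f$ and, for the converse, the reflexivity machinery of Proposition \ref{prop:AdjointDenselyDefined}; item 7 requires the constructions $T=\phi^{-1}U_f^{*}\phi$ and $U=J^{-1}(\phi^{*})^{-1}K^{*}\phi^{*}J$. Most importantly, item 5 is the one assertion for which your construction is directly relevant, yet you stop short of the step that makes it work: one takes an arbitrary non-atomic $\mu\in M(X)$, approximates it weak$^*$ by convex combinations of Dirac measures $\mu_n$, notes that $K_f$ applied to $\Phi(\mu_n)$ converges (in the $\langle\cdot,\cdot\rangle$ sense) to $\Phi(P_f\mu)$, and then uses the assumed closedness of $K_f$ with respect to $\langle\cdot,\cdot\rangle$ to force $\Phi(\mu)\in\SpanX$; your own injectivity of $\Phi$ then implies $\mu$ is a finite combination of Diracs, contradicting Lemma \ref{AppendixLemmaLinearCombination} when $X$ is infinite. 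Without items 1--7 the theorem is not proved.
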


\begin{proof}We have for all $x \in X$
    \begin{align*}
        K_fK_{\tilde{f}}k(x,\cdot) &= K_fk(\tilde{f}(x),\cdot) = k(f(\tilde{f}(x)),\cdot) = k((f\circ \tilde{f})(x),\cdot)\\
        &= K_{f \circ \tilde{f}}k(x,\cdot).
    \end{align*}
    Hence $K_fK_{\tilde{f}} = K_{f \circ \tilde{f}}$ on $\mathrm{Span}\{k(x,\cdot) : x \in X\}$ and if follows the first statement. In particular it follows $K_f^{-1} = K_{f^{-1}}$ if $f$ is invertible. For the third statement for $f = g$ it is obvious that also $K_f = K_g$. Assume now $K_f = K_g$. Then for $x \in X$ and all $h \in \B$
    \begin{eqnarray*}
         0 & = & \langle h,(K_f - K_g)k(x,\cdot)\rangle = \langle h,k(f(x),\cdot) - k(g(x),\cdot)\rangle.
    \end{eqnarray*}
    Hence, since we assumed $\B$ to be dense in $\B$ with respect to $\langle \cdot,\cdot\rangle$, it follows $k(f(x),\cdot) = k(g(x),\cdot)$. From Assumption \ref{Assumption:SpanLinearIndep}, it follows $f(x) = g(x)$. The fourth statement follows from $U_f = K_f'$ (by Lemma \ref{lem:PerronFrobeniusRKBS}), Lemma \ref{Lem:AdjointClosed} and the closed graph theorem. We will show the fifth statement at last once we have proven 8. For 6., if $U_f$ is densely defined then $B:= U_f'$ is a closed extension of $K_f$. If $(\B,\B',\langle\cdot,\cdot\rangle)$ is reflexive then the second statement in 6. follows from Proposition \ref{prop:AdjointDenselyDefined}. 
    In the case that $D(U_f) = \B$ then by 4. we have that $U_f$ is bounded. The idea is to use the adjoint of $U_f$ together with the isomorphism $\phi$ to define a natural candidate for an extension of $K_f$. We define the bounded operator $T:= \phi^{-1}U_f^*\phi:\B'\rightarrow \B'$, where $U_f^*:\B^* \rightarrow \B^*$ denotes the (classical) adjoint of $U_f$. We claim that $T$ extends $K_f$. To check this let $x \in X$ and $g\in \B$, then by definition of $\phi$ and Lemma \ref{lem:PerronFrobeniusRKBS}
    \begin{align*}
        \langle g,Tk(x,\cdot)\rangle & = \langle g,\phi^{-1}U_f^*\phi k(x,\cdot)\rangle = (U_f^*\phi k(x,\cdot))(g)\\
        & = (\phi k(x,\cdot))(U_fg) = \langle U_fg,k(x,\cdot)\rangle\\
        & = \langle g,K_fk(x,\cdot)\rangle.
    \end{align*}
    From which it follows $Tk(x,\cdot) = K_fk(x,\cdot)$ because $\phi$ is injective (or in other words, $\B$ is dense in $\B$ with respect to $\langle\cdot,\cdot\rangle$). For the second statement of 7. we assume that $K_f$ has a bounded extension $K:\B' \rightarrow \B'$ and $\B$ reflexive and want to show that $U_f$ is bounded. The idea is very similar but the adjoint of $K$ is an operator on $\B'^* \cong \B^{**}$ and in order to find an operator on $\B$ we use that $\B$ is reflexive. That $\B$ is reflexive means that the map
    \begin{equation}\label{eq:ReflexiveMap}
        J:\B \rightarrow \B^{**}, J(b)(b^*) := b^*(b)
    \end{equation}
    is an isomorphism. We define the candidate operator
    \begin{equation}\label{eq:DefOpU}
        U:= J^{-1}(\phi^{*})^{-1}K^*\phi^*J:\B \rightarrow \B.
    \end{equation}
    The operator $U$ from (\ref{eq:DefOpU}) is bounded and we claim that $U = U_f$. To check this let $g \in \B$ and $x \in X$. Then playing with the definition of $\phi,\phi^*$ and $J$ gives
    \begin{align*}
        Ug(x) &=  \langle Ug,k(x,\cdot)\rangle = \phi(k(x,\cdot))(Ug)\\
        &= \phi(k(x,\cdot)) \left( J^{-1}(\phi^{*})^{-1}K^*\phi^*Jg\right)\\
        &= \left((\phi^{*})^{-1}K^*\phi^*Jg\right)(\phi(k(x,\cdot))\\
        &= \left(K^*\phi^*Jg\right)(\phi^{-1}\phi(k(x,\cdot)) = (K^*\phi^*Jg)k(x,\cdot)\\
        &= (\phi^*Jg)(Kk(x,\cdot)) = (\phi^*Jg)k(f(x),\cdot)\\
        &= Jg(\phi k(f(x),\cdot)) = \phi(k(f(x),\cdot))(g)\\
        &= \langle g,k(f(x),\cdot) = g(f(x)) = U_fg(x).
    \end{align*}
    To show statement 8., we separate the two cases of assumptions (a) and (b) from 5. In the case of (b) note first that the (Bochner) integrals in (\ref{eq:DomainExtensionK}) and (\ref{eq:ExtentionKToMeasures}) exist due to the continuity assumptions on $k$ and $f$. By choosing $\mu$ to be a dirac delta $\delta_y$ for some $y \in X$ we get \begin{align*}
        \bar{K}_f k(y,\cdot) &= \bar{K}_f\left( \int\limits_X k(x,\cdot) \; d\delta_y(x)\right) = \int\limits_X k(f(x),\cdot) \; d\delta_y(x)\\
        &= k(f(y),\cdot).
    \end{align*}
    That shows that $\bar{K}_f$ extends $K_f$. It remains to show that (\ref{eq:ExtentionKToMeasures}) is well defined. That means whenever there are two measures $\mu,\nu \in M(X)$ with \begin{equation}\label{eq:NonUniqueRep}
        \int\limits_Xk(x,\cdot) \; d\mu(x) = \int\limits_Xk(x,\cdot) \; d\nu(x)
    \end{equation}
    then also $\int\limits_Xk(f(x),\cdot) \; d\mu(x) = \int\limits_Xk(f(x),\cdot) \; d\nu(x)$. This follows trivially once we have shown that the representation of (\ref{eq:NonUniqueRep}) is unique, i.e. (\ref{eq:NonUniqueRep}) implies $\mu = \nu$. From (\ref{eq:NonUniqueRep}) we get for all $g \in \B$ by continuity of the bilinear form
    \begin{eqnarray*}
        \int\limits_X g(x) \; d\mu(x) & = & \int\limits_X \langle g,k(x,\cdot)\rangle \; \mu(x) =        \bigg\langle g,\int\limits_X k(x,\cdot) \; d\mu(x) \bigg\rangle\\
        & = & \bigg\langle g,\int\limits_X k(x,\cdot) \; d\nu(x)\bigg\rangle = \int\limits_X g(x) \; d\nu(x).
    \end{eqnarray*}
    The universal property together with the Riesz-Markov representation theorem implies now that $\mu = \nu$. To show that we can extend $K_f$ by (\ref{eq:ExtentionKToMeasures}), also in case of assumption (a) from 5., we first show that the argument in (\ref{eq:ExtentionKToMeasures}) as well as the proposed image have representations based on the embedding $i:\B \rightarrow \C(X)$ (more precisely its adjoint $i^*:M(X) \rightarrow \B^*$), the isomorphism $\phi :\B^* \rightarrow \B'$ with $b^*(b) = \langle b,\phi(b^*)\rangle$ for all $b^* \in \B^*$ and $b\in \B$, and the Perron-Frobenius operator $P_f$ on $M(X)$ from (\ref{equ:DefPerronFrobenius}). Note that here the term $\int\limits_X k(x,\cdot) \; d\mu(x)$ is understood in the weak sense, that is, for each $g \in \B$ we have
    \begin{equation}\label{eq:weakdefintKx}
        \langle g,\int\limits_X k(x,\cdot) \; d\mu(x) \rangle := \int\limits_X \langle g,k(x,\cdot)\rangle \; d\mu(x) = \int\limits_X g(x) \; d\mu(x).
    \end{equation}
    Next, we claim that $\int\limits_X k(x,\cdot) \; d\mu(x)$, is nothing else than $\phi(i^*\mu)$ for all $\mu \in M(X)$. This can be seen as follows: For any $g \in \B$ we have
    \begin{eqnarray*}
        \langle g,\phi(i^* \mu)\rangle & = & (i^* \mu)(g) = \int\limits_X i(g)(x) \; d\mu(x) = \int\limits_X g(x) \; d\mu(x)\\
        & \overset{(\ref{eq:weakdefintKx})}{=} & \langle g,\int\limits_X k(x,\cdot) \; d\mu(x) \rangle.
    \end{eqnarray*}
    Similarly for the right-hand side of (\ref{eq:ExtentionKToMeasures}). Namely, for any $g \in \B$    \begin{eqnarray*}
        \bigg\langle g, \int\limits_X k(f(x),\cdot) \; d\mu(x)\bigg\rangle & = & \int\limits_X g(f(x)) \; d\mu(x) = \int\limits_X g \; dP_f\mu\\
        & = & (i^*(P_f \mu))(g) = \langle g,\phi(i^*(P_f \mu))\rangle.
    \end{eqnarray*}
    where $P_f$ denotes the Perron-Frobenius operator from (\ref{eq:KoopmanPerronFrobAdjoint}). That means (\ref{eq:ExtentionKToMeasures}) states that we want to extend $K_f$ to the range of $\phi \circ i^*$, i.e. $D$, by setting
    \begin{equation}\label{eq:ExtensionByiAndPhi}
        \bar{K}_f (\phi(i^* \mu)) := \phi(i^*P_f\mu)).
    \end{equation}First let us check that this is well defined. By the universal property, $i^*$ is injective (Remark \ref{rem:i*Injective}) and hence $\phi \circ i^*$ is injective, too -- hence (\ref{eq:ExtensionByiAndPhi}) is well defined. Finally, to see that $\bar{K}_f$ is indeed an extension of $K_f$ we have show that $\bar{K}_fk(x,\cdot) = k(f(x),\cdot)$ for all $x \in X$. As in the previous case we use that $\phi(i^*\delta_x) = k(x,\cdot)$ for any $x \in X$, from which it follows
    \begin{eqnarray*}
        \bar{K}_f k(x,\cdot) & = & \bar{K}_f (\phi(i^*\delta_x)) = \phi(i^*(P_f\delta_x)) = \phi(i^* \delta_{f(x)})\\
        & = & k(f(x),\cdot).
    \end{eqnarray*}
    This shows 8. under the assumption (b) from 5.
    Last, it remains to show 5. The property that is important in this proof is that weak* convergence of measures $\mu_n$ to $\mu \in M(X)$, denoted by $\mu_n \overset{*}{\rightharpoonup} \mu$, implies
    \begin{equation}\label{eq:weak*convBilinearFormConv}
     \bigg\langle g,\int\limits_X k(x,\cdot) \; d\mu_n(x)\bigg\rangle \rightarrow \langle g,\int\limits_X k(x,\cdot) \; d\mu(x)\rangle 
    \end{equation}
    for all $g \in \B$. This follows directly from the weak* convergence of $\mu_n$, namely
    \begin{align*}
        \bigg\langle g,\int\limits_X k(x,\cdot) \; d\mu_n(x)\bigg\rangle &= \int\limits_X g \; d\mu_n\rightarrow \int\limits_X g \; d\mu\\
        &= \langle g,\int\limits_X k(x,\cdot) \; d\mu(x)\rangle.
    \end{align*}
     We use the extension $\bar{K}_f$ of $K_f$ from 8. and show $\bar{K}_f = K_f$ if $K_f$ was closed with respect to $\langle \cdot,\cdot\rangle$. But this will lead to a contradiction because we will see that the domain of $\bar{K}_f$ is strictly greater than the domain of $K_f$. Let $\mu \in M(X)$. We may assume that $\mu$ represents a non-negative measure -- otherwise, apply the Hahn-Jordan decomposition to $\mu$. By scaling we may assume that $\mu$ is a probability measure. Then for $n \in \N$ there exist $x_1^{(n)},\ldots,x_{k_n}^{(n)} \in X$ and $\lambda_1^{(n)},\ldots,\lambda_{k_n}^{(n)} \geq 0$ with $\sum\limits_{i = 1}^{k_n} \lambda_i^{(n)} = 1$ such that
\begin{equation}\label{equ:deltaxConvergeToMu}
    \mu_n := \sum\limits_{i = 1}^{k_n} \lambda_i^{(n)} \delta_{x_i^{(n)}} \overset{*}{\rightharpoonup} \mu \; \; \text{ as } n \rightarrow \infty.
\end{equation}
    By continuity of the Perron-Frobenius operator $P_f$ on $M(X)$ from (\ref{equ:DefPerronFrobenius}) we then also have
    \begin{equation}\label{eq:weakcontPerronFrob}
        \sum\limits_{i = 1}^{k_n} \lambda_i^{(n)} \delta_{f(x_i^{(n)})} = P_f\mu_n \overset{*}{\rightharpoonup} P_f \mu
    \end{equation}
    For for any $g \in \B$ we get from (\ref{equ:deltaxConvergeToMu}) 
    \begin{eqnarray*}
        \langle g, \sum\limits_{i = 1}^{k_n} \lambda_i^{(n)} k(x_i^{(n)},\cdot) \rangle & = & \bigg\langle g, \int\limits_X k(x,\cdot) \; d\mu_n(x) \bigg\rangle\\
        & \rightarrow & \bigg\langle g,\int\limits_X k(x,\cdot)\; d\mu(x)\bigg \rangle
    \end{eqnarray*}
    and from (\ref{eq:weakcontPerronFrob})
    \begin{eqnarray*}
        \langle g, K_f \sum\limits_{i = 1}^{k_n} \lambda_i^{(n)} k(x_i^{(n)},\cdot) \rangle & = &  \langle g, \sum\limits_{i = 1}^{k_n} \lambda_i^{(n)} k(f(x_i^{(n)}),\cdot) \rangle \\
        & = & \bigg\langle g, \int\limits_X k(f(x),\cdot) \; d\mu_n(x)\bigg\rangle\\
        & = & \bigg\langle g, \int\limits_X k(x,\cdot) \; dP_f\mu_n(x)\bigg\rangle \\
        & \rightarrow & \bigg\langle g, \int\limits_X k(x,\cdot) \; dP_f\mu(x)\bigg\rangle
    \end{eqnarray*}
    Because we assumed that $K_f$ was closed with respect to $\langle\cdot,\cdot\rangle$ it follows in particular that $\int\limits_X k(x,\cdot)\; d\mu(x) \in D(K_f) = \SpanX$. That means we can find $m \in \N$, $y_1,\ldots,y_m \in X$, $a_1,\ldots,a_m \in \R$ with
    \begin{equation}
        \int\limits_X k(x,\cdot)\; d\mu(x) = \sum\limits_{i = 1}^m a_i k(y_i,\cdot) \text{ in } \B',
    \end{equation}
    which means for all $g \in \B$ we have
    \begin{equation*}
        \int\limits_X g \; d\mu = \sum\limits_{i = 1}^m a_i g(y_i) =  \int\limits_X g \; d\left( \sum\limits_{i = 1}^m a_i \delta_{y_i}\right).
    \end{equation*}
    From the universal property it follows $\mu = \sum\limits_{i = 1}^m a_i \delta_{y_i}$, i.e. $\mu$ is atomic. Since $\mu$ was arbitrary that means all Borel measures $\mu \in M(X)$ are atomic -- which contradicts Lemma \ref{AppendixLemmaLinearCombination} since $X$ contains infinitely many points.
\end{proof}

\begin{remark}
     In many cases in Theorem \ref{ElementaryPropertiesKoopmanRKHS} we could not overcome the need for regularity that we are used to from Hilbert spaces, such as reflexivity and the isomorphism between $\H$ and it dual space (here this role is played by $\phi$). That means for RKHS the regularity assumptions in Theorem \ref{ElementaryPropertiesKoopmanRKHS}) are redundant. We can partially overcome the regularity assumptions on $\RKBS$ by imposing regularity on the map $x \mapsto k(x,\cdot)$ for instance, as in \ref{5b}. Another possibility is in \ref{6} where instead of reflexivity of $\B$ it is possible to derive the same statement under the condition that $K_f$ has a bounded extension $K$ such that $\phi K \phi^{-1}$ is weak* continuous (from which it follows that there exists a bounded operator $U:\B \rightarrow \B$ with $U^* = \phi K \phi^{-1}$ and we can argue similarly as in the proof to show that $U = U_f$). 
\end{remark}

\begin{remark}[Invariant kernels]\label{rem:InvariantKernel}
    An easy (but restrictive) setting that guarantees boundedness of the operator $U_f$ on an RKHS $\H$ with kernel $k$ is invariance of $k$, i.e. for all $x,y \in X$
    \begin{equation}\label{eq:InvKernelDef}
        k(f(x),f(y)) = k(x,y).
    \end{equation}
    In this case $U_f$ and $K_f$ are isometries, due to
    \begin{eqnarray*}
        \left\|K_f\sum\limits_{i = 1}^n a_ik(x_i,\cdot)\right\|^2 & = & \left\|\sum\limits_{i = 1}^n a_ik(f(x_i),\cdot)\right\|^2 \\
        & = & \sum\limits_{i,j = 1}^n a_i \overline{a}_j k(f(x_i),f(x_j))\\
        & = & \sum\limits_{i,j = 1}^n a_i\overline{a}_j k(x_i,x_j) = \left\|k(x,\cdot)\right\|^2
    \end{eqnarray*}
    for all $n \in \N$ and $a_1,\ldots,a_n \in \mathbb{C}$. More generally, by the same arguments, the Perron-Frobenius operator is bounded with $\|K_t\| \leq M$ if and only if we have
    \begin{equation}\label{eq:KfBoundedExplicit}
        \sum\limits_{i,j = 1}^n a_i \overline{a}_j k(f(x_i),f(x_j)) \leq M\sum\limits_{i,j = 1}^n a_i\overline{a}_j k(x_i,x_j).
    \end{equation}
    In contrast to (\ref{eq:InvKernelDef}) the condition (\ref{eq:KfBoundedExplicit}) is typically not easily verified.
\end{remark}

Concerning continuity and domain, it is clear that the treatment of $U_f$ on an RKBS is more subtle than working on $\C(X)$ for continuous dynamics or $\mathrm{L}^2(X,\mu)$ for measure preserving dynamics (where the Koopman and Perron-Frobenius operators are bounded) -- for an RKBS it can happen that the condition $g \circ f \in \B$ might not be satisfied for any function $g \in \B \setminus \{0\}$. This indicates that the RKBS (and the kernel $k$) need to be chosen corresponding to the function $f$. This is a very natural condition because we want the RKBS to capture information about $f$.

One possibility of defining an RKBS such that the Koopman operators are bounded uses conjugacy and follows the classical concept for dynamical systems that sometimes (local) charts give better insight into the dynamics.

\begin{proposition}\label{LemmaConjugacy}
    Let $f:X \rightarrow X$ and $(\B,\B',\langle\cdot,\cdot\rangle,k)$ be an RKBS on $X$ with kernel. Let $g:Y \rightarrow Y$ such that there exists a bijective function $\phi: Y \rightarrow X$ with $\phi \circ g = f \circ \phi$. Let  $(\B_\phi,B'_\phi,\langle\cdot,\cdot\rangle_\phi,k_\phi)$ be the corresponding pullback RKBS with kernel from Lemma \ref{PullbackKernelRKBS}. Then 
    \begin{equation}\label{IntertwiningConjugacyRKBS}
        K_f T_\phi = T_\phi K_g.
    \end{equation}
    In particular if $K_f$ is bounded on $\B'$ then so is $K_g$ with $\|\overline{K}_g\| = \| \overline{K}_f\|$.
\end{proposition}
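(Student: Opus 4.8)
The plan is to verify the intertwining (\ref{IntertwiningConjugacyRKBS}) on the spanning kernel sections, where both Perron--Frobenius operators are given by explicit formulas, and then to transfer boundedness through the isometric isomorphism $T_\phi$ of Lemma \ref{PullbackKernelRKBS}. The first ingredient I would record is the action of $T_\phi$ on a kernel section: for $y \in Y$, unwinding the pullback definitions gives $T_\phi(k(\phi(y),\cdot)) = k(\phi(y),\cdot)\circ \phi = k_\phi(y,\cdot)$, since $k(\phi(y),\phi(y')) = k_\phi(y,y')$ for every $y' \in Y$. Because $\phi$ is bijective, the sections $k(\phi(y),\cdot)$ exhaust $\{k(x,\cdot) : x \in X\}$, so $T_\phi$ maps $D(K_f) = \SpanX$ isometrically onto $D(K_g) = \mathrm{Span}\{k_\phi(y,\cdot) : y \in Y\}$. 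This matching of the (merely algebraically defined) dense domains is what makes the compositions in (\ref{IntertwiningConjugacyRKBS}) meaningful.

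The second step is the computation itself, performed on a single section and then extended by linearity. Evaluating $K_f$ and invoking the conjugacy $f\circ \phi = \phi \circ g$ gives $K_f k(\phi(y),\cdot) = k(f(\phi(y)),\cdot) = k(\phi(g(y)),\cdot)$; applying $T_\phi$ and the section identity from the first step then yields $T_\phi K_f k(\phi(y),\cdot) = k_\phi(g(y),\cdot)$. On the other hand, directly from the definition of $K_g$ one has $K_g T_\phi k(\phi(y),\cdot) = K_g k_\phi(y,\cdot) = k_\phi(g(y),\cdot)$. The two expressions coincide on every kernel section, hence by linearity $K_f$ and $K_g$ are intertwined by $T_\phi$ on all of $D(K_f)$, which is the content of (\ref{IntertwiningConjugacyRKBS}).

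For the final assertion I would use that $T_\phi : \B' \to \B'_\phi$ is an isometric isomorphism. If $K_f$ is bounded it admits a unique bounded extension $\overline{K}_f$ to $\B'$, because $\SpanX$ is dense with respect to $\langle\cdot,\cdot\rangle$; the intertwining, already established on a dense subspace, passes to the closures and gives $\overline{K}_g = T_\phi \overline{K}_f T_\phi^{-1}$. Since conjugation by an isometric isomorphism preserves the operator norm, $\overline{K}_g$ is bounded with $\|\overline{K}_g\| = \|\overline{K}_f\|$.

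I expect the algebra to be entirely routine; the only point that demands attention is the bookkeeping of domains and directions --- establishing $T_\phi(k(\phi(y),\cdot)) = k_\phi(y,\cdot)$ correctly from the pullback construction and checking that $T_\phi$ carries the dense domain of $K_f$ onto that of $K_g$ --- so that the operator identity is well posed on matching dense subspaces before one passes to the bounded extensions and compares norms.
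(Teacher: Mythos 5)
Your proof is correct and follows essentially the same route as the paper: verify the intertwining on the kernel sections using the conjugacy $f\circ\phi = \phi\circ g$, extend by linearity, and transfer boundedness through the isometric isomorphism $T_\phi$ of Lemma \ref{PullbackKernelRKBS}. The only cosmetic difference is the direction in which you write the intertwining ($T_\phi K_f = K_g T_\phi$, whereas the paper's $K_f T_\phi = T_\phi K_g$ implicitly applies $T_\phi^{-1}$ to the sections $k_\phi(y,\cdot) \in \B'_\phi$ — your convention is actually the one consistent with Lemma \ref{PullbackKernelRKBS}); one small caveat is that density of $\SpanX$ with respect to $\langle\cdot,\cdot\rangle$ is weaker than norm density, so the unique bounded extension is guaranteed only on the norm closure of $\SpanX$ rather than on all of $\B'$, but this does not affect the norm equality you derive.
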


\begin{proof} By Lemma \ref{PullbackKernelRKBS} we have that $T_\phi$ is an isometric isomorphism. Hence it remains to show (\ref{IntertwiningConjugacyRKBS}). For any $y \in Y$ we have
    \begin{align*}
        K_fT_\phi k_\phi(y,\cdot) & = K_f k(\phi(y),\cdot) = k(f(\phi(y)),\cdot)\\
                                  & = k(\phi(g(y)),\cdot) = T_\phi k_\phi(g(y),\cdot)\\
                                  & = T_\phi K_g k_\phi (y,\cdot). \qedhere
    \end{align*}
\end{proof}

Proposition \ref{LemmaConjugacy} can be exploited
when we have the knowledge of a suited RKBS for a conjugated system.

\subsection{Examples}

In this section, we present several examples from the literature. Example \ref{example:RKHSRn} is of introductory nature and covers linear (or finite) dynamics and Example \ref{ex:CXRKBS} recovers the case of the Koopman operator acting $\C(X)$ from Example \ref{example:KoopmanCXL2} from an RKBS perspective. Other examples treat holomorphic dynamics (Example \ref{ex:Hardyspace}), point out limitations of the approach (Examples \ref{ex:PositiveDefiniteFunction} and \ref{ex:ShiftInvariantKernel}), focus on polynomial dynamics and provide situations in which the domain of the Koopman operator contains the set of all polynomials or where a connection to well-posedness of a transport equation is drawn via boundedness of Koopman operator.

Those examples, particularly the limiting ones, demonstrate that not any RKBS fits the dynamical system at hand, and properties of the dynamical system, such as linearity or regularity, have to be considered for the choice of the kernel.

For our first example, we view $\R^n$ as an RKBS.

\begin{example}[$\R^n$ as an RKBS and linear systems]\label{example:RKHSRn}
There are two very natural ways to interpret $\R^n$ as an RKBS. The first is that $\R^n$ is interpreted as the space of functions from $X := \{1,\ldots,n\}$ to $\R$, i.e. we identify $(x_1,\ldots,x_n) \in \R^n$ with the map $x(\cdot):X \rightarrow \R$ given by $x(i):= x_i$ for $i = 1,\ldots,n$. Since that space is finite dimensional it is a Banach space for any norm we choose and any linear operator is bounded, including the point evaluation $x(\cdot) \mapsto x(i)$ for $i = 1,\ldots,n$. The second way to view $\R^n$ as an RKBS is to interpret $\R^n$ as the dual space of $\R^n$, that is we view an element $x \in \R^n$ as a linear map from $\R^n$ to $\R$. This can be done by fixing a bilinear form $\langle\cdot,\cdot\rangle$ on $\R^n$; then each $x \in \R^n$ induces the linear map $\hat{x}:\R^n \rightarrow \R$ given by $\hat{x}(a) := \langle a,x\rangle$. Again, due to finite dimensions, the point evaluation for $\hat{x}$ is continuous. In the following we will make the above constructions more precise and induce corresponding kernels as well.

The first case, i.e. viewing $\R^n$ as the space of real valued functions on $X = \{1,\ldots,n\}$, is well suited for dynamical systems on the discrete set $X$. We denote the dynamics on $X$ with $i \mapsto \sigma(i)$ for $i \in X= \{1,\ldots,n\}$. We set $\B = \B' = \{h:X\rightarrow \R\}$ and we identify each $h \in \B$ with a vector $\bar{h} := (h(1),\ldots,h(n)) \in \R^n$. For the bilinear form we choose $\langle h,g\rangle := \bar{h}^T M \bar{g}$ for $h,g \in \B = \B'$ and an invertible matrix $M \in \R^{n \times n}$ respectively $M \in \mathbb{C}^{n \times n}$. Invertibility of $M$ on the one hand assures that $\B$ is dense in $\B$ with respect to $\langle\cdot,\cdot\rangle$. On the other hand for all $i = 1,\ldots,n$
\begin{equation}\label{eq:DefKRn}
    \bar{k}(\cdot,i) = M^{-1}e_i, \; \bar{k}(i,\cdot) = (M^T)^{-1}e_i
\end{equation}
where $e_i := (\delta_{ij})_{j = 1}^n \in \R^n$, which shows that $\{k(x,\cdot): x \in X\}$ is linearly dependent. Equation (\ref{eq:DefKRn}) follows from $\langle h,k(\cdot,i)\rangle = h(i) = \bar{h}^T e_i = \bar{h}^TM M^{-1}e_i$. In particular, $\B'$ is an adjoint RKBS. Clearly the Koopman operator $U_\sigma$ is well defined and acts by
\begin{equation}
    U_\sigma h = h \circ \sigma
\end{equation}
and
\begin{equation*}
    \overline{U_\sigma h} = P_\sigma \bar{h}
\end{equation*}
where $P_\sigma$ denotes the permutation matrix with $(P_\sigma)_{ij} = 1$ if $\sigma(i) = j$ and $(P_\sigma)_{ij} = 0$ otherwise. The Perron-Frobenius operator has the form
\begin{equation}\label{eq:PerrFrobDiscreteRn}
    K_\sigma k(i,\cdot) = k(\sigma(i),\cdot)
\end{equation}
and hence
\begin{equation*}
    \overline{K_\sigma k(i,\cdot)} = M^{-1} e_{\sigma(i)}.
\end{equation*}
Hence the representation matrix of $K_\sigma$ with respect to the basis $e_1,\ldots,e_n$ of $\R^n$ (respectively $\mathbb{C}^n$) is given by $M^{-1}P_\sigma M$. If $M$ is chosen so that it diagonalizes $P_\sigma$ then the Koopman operator has a diagonal representation with respect to the standard basis of $e_1,\ldots,e_n$.

For the second case we choose $X = \R^n \cong \B = \B'$ where we identify an element $a \in \R^n$ with a map $g_a:X \rightarrow \R$ by $g_a(x) := a^T x$, i.e. $\B$ and $\B'$ consist of linear forms on $\R^n$. Again we choose the bilinear form $\langle g_a,g_b\rangle := a^T M b$ for an invertible matrix $M \in \R^{n \times n}$. Similar to the discrete case we have for $x \in \R^n$ that $k(x,\cdot) = g_{M^{-1}x}$ because
\begin{equation*}
    \langle g_a,g_{M^{-1}x}\rangle = a^TMM^{-1}x = a^Tx = g_a(x).
\end{equation*}
From a dimension argument it is clear that $\{k(x,\cdot): x \in X\}$ is not linearly independent so we do not know a-priori that the Perron-Frobenius operator defined as in Definition \ref{def:KoopmanPerronFrobeniusRKBS} is well defined. Let the dynamics be given by $x_{k+1} = Ax_k$ for a matrix $A \in \R^{n\times n}$. The Koopman operator is given by
\begin{equation}
    U_A g_a(x) = g_a(Ax) = a^T Ax = (A^Ta)^Tx = g_{A^Ta}(x)
\end{equation}
for all $x$, i.e. $U_A$ has matrix representation $A^T$ with respect to the basis $g_{e_1},\ldots,g_{e_n}$, and
\begin{equation}\label{eq:PerronFrobLinearForm}
    K_Ak(x,\cdot) = k(Ax,\cdot) = g_{M^{-1}Ax}.
\end{equation}
And we see that the basis representation of $K_A$ in the standard basis $e_1,\ldots,e_n$ is given by $MAM^{-1}$.
\end{example}

Next, we turn our attention to the classical example of the Koopman operator on $\C(X)$ for continuous dynamics $f$ from Example \ref{example:KoopmanCXL2}. We reformulate it as a Koopman operator on the RKBS $\C(X)$.

\begin{example}[$\C(X)$ as an RKBS.]\label{ex:CXRKBS}
    For compact $X$, we view $\C(X)$ equipped with the supremum norm $\|\cdot\|_\infty$ as an RKBS with kernel.\cite{RKBSUnified} Clearly $\C(X)$ enjoys bounded point evaluation, hence $\B := \C(X)$ is an RKBS. But there is freedom in the choice of $\B'$ and the kernel\cite{RKBSUnified}. We follow the construction from\cite{RKBSUnified}. Let $X$ be compact and $k:X \times X \rightarrow \R$ continuous such that $\mathrm{Span}\{k(\cdot,x): x \in X\}$ is a dense subset of $\C(X)$. For $X = [0,1]$ examples of such $k$ are $k(x,y) = 1- |x-y|$, $k(x,y) = e^{xy}$ and $k(x,y) = (1+y)^x$. We define the RKBS in the following way: Let $\B = \C(X)$ and $\B'$ be the space of kernel mean embeddings, i.e.
    \begin{equation}\label{eq:ExRKBSCXKernelEbedding}
        \B' = \left\{g_\mu : \mu \in M(X), g_\mu(x):= \int\limits_X k(y,x) \; d\mu(y)\right\}
    \end{equation}
    and the bilinear form $\langle\cdot,\cdot\rangle:\B \times \B' \rightarrow \R$ is given by
    \begin{equation}\label{eq:BilinearFormCX}
        \langle h,g_\mu\rangle := \int\limits_X h \; d\mu.
    \end{equation}
    The condition that $\SpanX$ is dense in $\C(X)$ guarantees that the bilinear form (\ref{eq:BilinearFormCX}) is well defined. Then $\RKBS$ is an RKBS with kernel $k$.\cite{RKBSUnified}
    To verify that $k$ is a kernel let $x\in X$. Because for $y \in X$ we have $k(x,\cdot)(y) = k(x,y) = \int\limits_X k(x,z)\; d\delta_y(z)$, i.e. $k(x,\cdot) = g_{\delta_x}$ (with the notion from (\ref{eq:ExRKBSCXKernelEbedding})). It follows for all $h \in \C(X)$
    \begin{equation*}
        \langle h,k(x,\cdot)\rangle = \langle h,g_{\delta_x}\rangle = \int\limits_X h \; d\delta_x = h(x).
    \end{equation*}
    Further $k$ is an adjoint kernel as well. To check this let $\mu \in M(X)$. For $g_\mu$ we have
    \begin{equation*}
        g_\mu(x) = \int\limits_X k(y,x) \; d\mu(y) = \int\limits_X k(\cdot,x) \; d\mu = \langle k(\cdot,x),g_\mu\rangle.
    \end{equation*}
    For the Koopman operator, we get that $U_f$ has domain $\B = \C(X)$ because $g \circ f$ is continuous whenever $g$ is (since we assume $f$ to be continuous). Hence $K_f$ can be extended to a bounded operator on $\B'$ by Theorem \ref{ElementaryPropertiesKoopmanRKHS}). Note that $\C(X)$ is not unique as an RKBS with kernel. Similarly, $K_f$ depends on the kernel $k$. For the examples $k(x,y) = 1- |x-y|$, $k(x,y) = e^{xy}$ and $k(x,y) = (1+y)^x$ for $X = [0,1]$ we get
    \begin{equation}
        K_f: \begin{cases} 1- |x-\cdot| \mapsto 1-|f(x)-\cdot|\\
                           e^{x\cdot} \mapsto e^{f(x)\cdot}\\
                           (1+\cdot)^x \mapsto (1+\cdot)^{f(x)}.
        \end{cases}
    \end{equation}
    
\end{example}

In the next example, we consider Hardy spaces $H^p(D)$ where $p \geq 1$ and $D$ is the unit disc $D \subset \mathbb{C}$. The Hardy space $H^p(D)$ consists of all analytic functions on $D$ for which the following norm is finite
\begin{equation}
    \|g\|_{H^p} := \sup\limits_{0\leq r < 1} \left(\int\limits_0^{2\pi} | f(re^{i\theta})|^p \; d\theta\right)^{\frac{1}{p}}.
\end{equation}

\begin{example}\label{ex:Hardyspace}
The kernel for the Hardy space $H^p(D)$ is given by the Szegö kernel $k(z,w) := \frac{1}{1-z\bar{w}}$ and turns $B:= H^p(D)$ into an RKBS where we take the dual-pairing $\langle\cdot,\cdot\rangle$ of $H^p(D)$ and its dual space and we set $\B' := \overline{\mathrm{Span}\{k(z,\cdot) : z \in D\}}$ where the closure is taken in the topological dual space of $H^p(D)$. By \cite{CompOpAnalyticFunctions} (Theorem 3.6 in the literature) a holomorphic automorphism $f:D \rightarrow D$ has a bounded Koopman operator on $H^p(D)$ with $\|U_f\|^p = \frac{1+| f(0)|}{1-| f(0) |}$. For dynamics given by a Möbiustransform $f(z):= \lambda \frac{z-a}{1-z \bar{a}}$ for $a,\lambda \in \mathbb{C}$ with $| \lambda | = 1$ and $|a| < 1$ we define the map $\phi(z) := \frac{z-\gamma}{1-z \bar{\gamma}}$ with the unique fixed point $\gamma \in D$ of $f$. It can be shown that the pull-back kernel $k_\phi$ is an invariant kernel for $f$ and thus $U_{f}$ is an isometry (see Remark \ref{rem:InvariantKernel}) on the pullback RKBS (see Lemma \ref{PullbackKernelRKBS}) and $K_{f}$ and can be extended to an isometry on the same space as well. In \cite{russo2022liouville} the authors go further and consider weighted composition operators on the Hardy space and show stronger boundedness results for this setting.
\end{example}

The following example presents a class of RKHS that does not allow for compact Koopman operators.
\new{
\begin{example}\label{ex:PositiveDefiniteFunction}
Let $G$ be a locally compact abelian group and let $\widehat{G}$ be the character group of $G$, the set of continuous group homomorphisms from $G$ to the unit circle in $\mathbb{C}$.
Let ${\rm d}g$ and ${\rm d}\chi$ be Haar measures of $G$ and $\widehat{G}$, respectively.
    A map $u:G\rightarrow{\mathbb C}$ is a {\em positive definite function} if $k(x,y):=u(x-y)$ is a positive definite kernel. 
We call $\H$ the RKHS associated with $u$.
Thanks to Bochner's theorem\cite{Katznelson-text-04}, a positive definite function on $G$ can be realized as a Fourier transform of a finite Borel measure. 
Namely, in the case where $u$ is continuous, $u$ is a positive definite function if and only if there exists a finite Borel measure $\mu$ on $\widehat{G}$ such that 
\[u(x)
=\widehat{\mu}(x)
:=\int_{\widehat{G}}\chi(x)\,d\mu(\chi).\]
Let us consider the case $\mu = w(\chi){\rm d}\chi$ where ${\rm d}\chi$ is the Haar measure of $\widehat{G}$ and $w \in L^1 \cap L^\infty \setminus \{0\}$ and $w\ge0$ almost everywhere. 
Then, if $k(x,y) = \widehat{\mu}(x-y)$, we have
\[ \H = \left\{h\in C^0\cap L^2(G): \widehat{h}\in L^p(\widehat{G}, w^{-1})\right\}.\]
Here, we define the Fourier transform $\widehat{h} \in L^2(\widehat{G})$ of $h \in L^2(G)$ by
\[\widehat{h}(\chi):= \int_G h(g)\overline{\chi(g)} {\rm d}g.\]
It is known that under certain conditions on $w$, no composition operator is compact on $\H$\cite{IIS20} in the case of $G = \mathbb{R}^d$.
We note that when $G=\mathbb{R}^d$, we usually regard $\widehat{G}$ as $\mathbb{R}^d$ via the correspondence
\[\mathbb{R}^d \ni x \mapsto [\xi \mapsto {\rm e}^{2\pi i x \cdot \xi}]. \]
The general description of the Fourier transform on the locally compact abelian group defined above is equivalent to the usual Fourier transform via the above correspondence when $G = \mathbb{R}^d$
\end{example}
}
We continue the previous example by considering shift invariant kernels, which include the popular setting of Gaussian kernel RKHS.

\begin{example}[Shift invariant kernels]\label{ex:ShiftInvariantKernel}
    A kernel $k$ on $\R^n$ (or any group) is called shift invariant if for all $x,y,a \in \R^n$ we have $k(x+a,y+a) = k(x,y)$. So shift invariant kernel are the kernels that are invariant kernels for all translation maps $f_a:\R^n 
    \rightarrow \R^n$ with $f_a(x) = x+a$.\\
    Kernels of the form
    \begin{equation}\label{equ:ExampleKernelInducedByH}
        k(x,y) = h(\|x-y\|)
    \end{equation}
    for some positive definite function $h$ are typical examples for shift invariant kernels. A function $h$ is called positive definite if the corresponding kernel (\ref{equ:ExampleKernelInducedByH}) is positive definite.
    For example the Gauss kernel with parameter $\sigma > 0$ given by
    \begin{equation}
        k(x,y) = \frac{1}{\sigma \sqrt{2\pi}} e^{-\frac{\|x-y\|^2}{2 \sigma^2}}
    \end{equation}
    is positive definite and the corresponding RKHS is dense in the space of continuous functions on $\R^n$ that vanish at infinity \cite{sriperumbudur2011universality}. For the Gaussian kernel RKHS the only dynamics that induce bounded Koopman and Perron-Frobenius operators are affine ones.\cite{IsaoBoundedCompOp} Nevertheless, Koopman analysis in this setting has been successfully applied to forecasting\cite{Kawahara16,alexander2020operator} and system identification\cite{RosenfeldSystemIdentification}
\end{example}

The next example treats some RKBS and RKHS that (densely) contain polynomials. This is of particular importance in case of polynomial dynamics $f$, because then the Koopman operator on those spaces is well-defined at least on the set of polynomials.

\begin{example}\label{ex:Polynomials}
    The easiest example of an RKBS containing polynomials is the space $\R[x]_d$ of polynomials up to a fixed degree $d \in \N$. Because this space is finite dimensional it can be made an RKBS and a kernel can be chosen as $k(x,y) := (1+x^Ty)^d$. This space is Koopman invariant only if the dynamics $f$ are affine. For polynomial dynamics $f$ of degree $\deg (f) = k$ we have $D(U_f) \supset \R[x]_{d-k}$. Among examples of RKBS which contain all polynomials are $\C(X)$ from Example \ref{ex:CXRKBS}, the Bragmann-Fock space\cite{rosenfeld2022dynamic} with kernel $k(x,y) := e^{\bar{x}^Ty}$ consisting of the holomorphic functions $g$ on $\mathbb{C}^n$ with finite integrals $\int\limits_{\mathbb{C}^n} g(z) e^{-\|z\|^2} \; dz$, the Bergman space $A^p(G)$ of $p$-integrable holomorphic functions on a domain $G \subset \mathbb{C}^n$ (with kernel $k(z,w) = \frac{1}{(1-\bar{z}w)^2}$ in the case where $G \subset \mathbb{C}$ is the unit disc) and Sobolev spaces (see Example \ref{ex:Sobolev}). \commentCS{In\cite{mezic2020spectrum}, the Koopman operator on the Bragmann-Fock space was investigated and boundedness of the Koopman operator was shown for a pullback kernel obtained via principal eigenfunctions. We refer to\cite{mezic2020spectrum} for more examples of RKHS appearing naturally for spectral expansions of the Koopman operators $U_f$.} In the above-mentioned examples the set of polynomials is even dense with respect to the corresponding topologies; on the contrary, the RKHSs corresponding to the Gaussian kernel and the Cauchy kernel don't contain any non-zero polynomial\cite{dette2021reproducing}.
\end{example}

The following example is important in signal processing, time-warping\cite{azizi1999preservation} and sampling\cite{bergner2006spectral} and is another example of an RKHS that contains the space of polynomials.

\begin{example}\label{ex:BandLimitedFunctions}
    We consider the space of band-limited functions  $PW_A := \{\hat{g} : g \in \L^2((-A,A))\}$ where $\hat{g}$ denotes the Fourier-transform of a function $g$ and $A \in (0,\infty)$ is the band width. The space $PW_A$ is an RKHS\cite{paulsen, SS16} with kernel $k(x,y) = \mathrm{sinc}(A(x-y)) := \frac{1}{\pi} \frac{\sin (2\pi A (x-y))}{x-y}$ for $x \neq y$ and $k(x,x):= 2A$. The Koopman operator appears in time warping, that is when for the incoming signal $f(x)$ only the signal $h(f(x)) = U_fh (x)$ is observed\cite{azizi1999preservation}. The bandwidth of the time-warped signal $h(f(x))$ determines the Nyquist sampling rate\cite{bergner2006spectral} and therefore the question of whether $h$ is in the domain of $U_f$, i.e. whether $U_fh$ still belongs to $PW_A$ is important. It was shown in \cite{mukherjee2011range} that only injective affine maps induce bounded Koopman operators on the space of band-limited functions on $\R^n$, and the same result holds for the larger function space $\cup_{A > 0}PW_A$\cite{lebedev2012functions}. The celebrated Payley-Wiener Theorem\cite{SS16} characterizes the functions in $PW_A$ by their exponential growth -- in particular, this shows that the space of polynomials is contained in $PW_A$.
\end{example}

Another class of examples arises from Sobolev spaces with enough regularity and we state an easy condition for which diffeomorphisms induce bounded Koopman and Perron-Frobenius operators on these spaces.

\begin{example}[Sobolev space]\label{ex:Sobolev} 
    For $\Omega \subset \R^n$ open and bounded with $C^1$ boundary. For $s \in \N$ and $p \in [2,\infty)$ we denote by $W^{s,p}(\Omega)$ the Sobolev space\cite{Brezis} of functions with $p$-integrable weak derivatives up to order $s$. If $s > \frac{n}{p}$ the Sobolev embedding\cite{Brezis} tells that $\mathrm{W}^{s,p}(\Omega)$ is a subspace of $\C(\overline{\Omega})$ and there is a constant $C$ with $\| g\|_\infty \leq C \|g\|_{W^{s,p}}$. In such cases for $q \in [1,\infty)$ with $\frac{1}{p} + \frac{1}{q} = 1$ we can turn $\B := \mathrm{W}^{s,p}(\Omega)$ into an RKBS with the universal property. To reduce notation we restrict to the one-dimensional case, i.e. $\Omega \subset \R$. The higher dimensional situation is analogous. We set $\B':= \mathrm{W}^{s,q}(\Omega)$ and the bilinear form 
    \begin{equation}\label{eq:bilinearformSobolevspace}
       \langle g,h\rangle := \sum\limits_{j = 0}^s \int\limits_{\Omega} g^{(j)}(x) h^{(j)}(x) \; dx
    \end{equation}
    where $g^{(j)}$ respectively $h^{(j)}$ denotes the $j$-th weak derivate of $g$ respectively $h$. The existence of a kernel $k$ follows because $W^{s,2}(\Omega)$ is an RKHS\cite{paulsen} and hence admits a kernel $k$ -- the same $k$ provides a kernel for $(\B,\B',\langle \cdot,\cdot\rangle)$. For $n = s = 1$ and $\Omega = (0,1)$ the kernel is given by\cite{paulsen}
    \begin{equation}\label{eq:SobolevKernel}
        k(x,y) = \begin{cases} (1-y)x, & x \leq y\\
                              (1-x)y, & x \geq y.
                 \end{cases}
    \end{equation}
    We can define the Perron-Frobenius operator but in this case, it is easier to check that the Koopman operator is bounded. By Theorem \ref{ElementaryPropertiesKoopmanRKHS} the boundedness of the Koopman operator is equivalent to $D(U_f) = \B$. We show that for $\Omega = (0,1) \subset \R$\new{, $s = 1$} and diffeomorphic  $f:(0,1)\rightarrow (0,1)$ such that $f'$ and $\frac{1}{f'}$ are bounded we indeed have $D(U_f) = \B$. For $g \in \B \cap \mathcal{C}^1(U)$ we get
    \begin{eqnarray*}
        \|U_fg\|_{\mathrm{L}^p}^p & = & \int\limits_0^1 g(f(x))^p \; dx\\
        & = & \int\limits_{f(0)}^{f(1)} g(y)^p \frac{1}{f'(f^{-1}(y))} \; dy \leq \|\frac{1}{f'}\|_\infty \|g\|_{\mathrm{L}^p}^p
    \end{eqnarray*}
    and from $(U_fg)' = (g \circ f)' = g' \circ f \cdot f'$ we get
    \begin{eqnarray*}
        \|(U_fg)'\|_{\mathrm{L}^p}^p & = & \int\limits_0^1 g'(f(x))^p f'(x)^p \; dx\\
        & = & \int\limits_{f(0)}^{f(1)} g'(y)^p f'(f^{-1}(y))^{p-1} \; dy \leq \|f'\|^{p-1}_\infty \|g'\|_{\mathrm{L}^p}^p.
    \end{eqnarray*}
    This shows that $U_f\big|_{\B \cap \mathcal{C}^1(U)}:\H \cap \mathcal{C}^1(U) \rightarrow \B$ is a bounded operator. Since $\B \cap \C^1(U)$ is dense in $\B$ we can uniquely extend $U_f\big|_{\B \cap \mathcal{C}^1(U)}$ to a bounded operator $T$ on $\B$. \new{ It remains to check that $T$ is nothing else then $U_f$. For that we use that $U_f$ is closed by Theorem \ref{ElementaryPropertiesKoopmanRKHS} and that $\C^1(U) \cap \B$ is dense in $\B$. Let $g \in \B$ and $g_m \in \C^1(U)\cap \B$ with $g_m \rightarrow g$ in $\B$ as $m \rightarrow \infty$. Because the operator $T$ is bounded we get $U_f g_m = Tg_m \rightarrow Tg$. From $U_f$ being closed it follows $g \in D(U_f)$ and $U_fg = T_g$, i.e. $T = U_f$. } Hence $U_f$ is bounded and it follows from Theorem \ref{ElementaryPropertiesKoopmanRKHS} 7. that $\Kf = U_f^*$ is bounded, too. \new{For $s > 1$ similar arguments show boundedness of $U_f$ on the RKBS $(\B := W^{s,p}(0,1),\B' := W^{s,q}(0,1),\langle\cdot,\cdot\rangle,k)$ for $\langle\cdot,\cdot\rangle$ from (\ref{eq:bilinearformSobolevspace}) and $k$ from (\ref{eq:SobolevKernel}). In this case, we extend the assumption that $\frac{1}{f'}$ and $f'$ are bounded to the assumption that $\frac{1}{f'}$ and all derivatives of $f$ up to order $s$ are bounded.} We refer to \cite{menovschikov2021composition} for detailed investigations of composition operators on Sobolev spaces.
   
\end{example}

Extending the previous example concerning Sobolev spaces we provide next a theoretically crucial example, the Besov space $B^{s}_{p,q}(\mathbb{R}^n)$ for $s>n/p$ and $p,q\in (0,\infty]$.
\begin{example}\label{ex:BesovSpace}
The Besov space $B^s_{p,q}$ plays a crucial role in partial differential equations and harmonic analysis and coincides with the Sobolev space $W^{s,p}(\mathbb{R}^n)$ when $p=q$.
We emphasize that the RKBS covers the Besov space as a special example.
Here, we provide a definition of the Besov space $B^s_{p,q}(\mathbb{R}^n)$ in the case where $0 < p,q\le \infty$ and $s > \max (0, 1/p -1)$. We always assume $s>n/p$ and $p,q\in [1,\infty]$, which implies $B^s_{p,q}(\mathbb{R}^n)\subset \C^0(\mathbb{R}^n)$. There are generalized definitions for any $s,p,q$ on a domain\cite{Triebelbook,Taniguchi19}. The Besov space $B^s_{p,q}(\mathbb{R}^n)$ is the collection of measurable functions $f$ on $\mathbb R$ such that 
\[
\|f\|_{B^s_{p,q}} 
:= 
\|f\|_{L^p}
+
\left(
\int_{|h|\le 1}
|h|^{-sq}
\|\Delta_h^mf\|_{L^p}^q
\frac{dh}{|h|}
\right)^\frac{1}{q} < \infty
\]
(with the usual modification for $q=\infty$), 
where $m\in \mathbb N$ with $m>s$.
Here, the difference operator $\Delta^m_h$ of order $m\in \mathbb N$ is defined by 
\[
\Delta^m_hf(x) := \sum_{j=0}^m(-1)^{m-j}
\begin{pmatrix}
m\\
j 
\end{pmatrix}
f(x + j h),\quad x, h\in \mathbb R
\]
and $\Delta^0_h$ is the identity operator.

As we mention above, the Besov space $B^{s}_{p,q}(\mathbb{R}^n)$ for $s>n/p$ is (continuously) included in the space of continuous functions, and thus the evaluation map is obviously continuous on $B^s_{p,q}$.
Therefore, the Besov space is an RKBS (Definition \ref{def of RKBS}).
The Koopman operators on the Besov space is promising machinery for the analysis of the transport equation.
The solution of the transport equation
\[
\begin{cases}
	\partial_t u (t,x) - b(t,x) \partial_x u(t,x) = 0,\\
	u(0,x) = u_0(x)
\end{cases}
\]
can be represented by the Koopman operator \cite{Xia-2019}:
\[u(t,x) = u_0 ( \chi_t (x) ) = K_{\chi_t} u_0(x),\]
where  $\chi_t(x) = \chi (t,x)$ is the solution of
\[
	\partial_t \chi(t,x) = b(t, \chi(t,x)),\quad 
	\chi(0,x) =x.
\]
Thus, the boundedness of Koopman operators implies continuous dependence of initial values\cite{Xia-2019}.
\end{example}

\section{Continuous time systems}\label{sec:ContinuousTime}

In this section, we treat continuous time systems. They are important for many applications, especially in engineering, where the system obeys an autonomous ordinary differential equation of the form $\dot{x} = f(x)$ for a vector field $f$. Often discrete time systems arise as discretizations of continuous time systems but even in that case, there are fundamental differences between those two cases. While for discrete time systems the evolution $x_k \mapsto f(x_{k+1})$ is explicit, for continuous time systems typically only the infinitesimal evolution $f$, i.e. $\dot{x} = f(x)$, is known.

In this section, we define the Koopman and Perron-Frobenius semigroups -- similar to the discrete case. We describe the infinitesimal generator of both semigroups and relate it to the vector field $f$. Finally, we give a geometric condition under which the Koopman and Perron-Frobenius semigroup are strongly continuous and consist of bounded operators.

\subsection{Koopman and Perron-Frobenius semigroup}

The definition of the Koopman and Perron-Frobenius semigroup is based on a semiflow $\varphi_t$, see (\ref{eq:SemiflowProp}), but whenever we investigate their generators we assume that the dynamical system is induced by an ordinary differential equation $\dot{x} = f(x)$ for a vector field $f$ on $X$. In that cases, we implicitly assume that $X$ is either a (compact) smooth manifold (with boundary) and $f$ a smooth vector field on $X$ or $X$ is a subset of $\R^n$ and $f:\R^n \rightarrow \R^n$ a locally Lipschitz continuous map.

For a given differential equation $\dot{x} = f(x)$ and $x(0) = x_0$ we denote its solution map by $\varphi$, i.e. $\varphi_t(x_0)$ denotes a solution at time time $t$ to the initial value $x_0$. If solutions to the differential equation are unique, the map $\varphi$ satisfies the following semiflow property
\begin{equation}\label{eq:SemiflowProp}
    \varphi_0(x) = x \text{ and } \varphi_{t+s}(x) = \varphi_t(\varphi_s(x)) \; \text{for all } t,s \in \R_+
\end{equation}
for $x \in X$ such that $\varphi_t(\varphi_s(x))$ and $\varphi_{t+s}(x)$ are elements of $X$.

\begin{assumption}\label{Assumption:DynSysContTime}
    We assume that the set $X$ is positively invariant with respect to the differential equation $\dot{x} = f(x)$, i.e. $\varphi_t(x) \in X$ for all $x \in X$ and $t \in \R_+$, and that $\varphi$ satisfies (\ref{eq:SemiflowProp}).
\end{assumption}

\begin{definition}[Koopman semigroup]
    Let $Y$ be a (normed) space of functions on $X$. Under Assumption \ref{Assumption:DynSysContTime} the Koopman semigroup associated to a semiflow $\varphi$ is a family of operators $(U_t)_{t \in \R_+}$, for each $t \in \R_+$, defined by
    \begin{equation}\label{eq:KoopmanContinuous}
        U_t:D(U_t) \rightarrow Y,\ \ U_tg := g \circ \varphi_t
    \end{equation}
    with $D(U_t) := \{g \in Y: g \circ \varphi_t \in Y\}$.
\end{definition}

\begin{remark}
    The Koopman operators $U_t$ are linear for all $t \in \R_+$ and satisfy the semigroup property, that is, for all $t,s \in \R_+$
    \begin{equation}\label{eq:Semigroup}
        U_tU_sg = U_{t+s} g
    \end{equation}
    for all $g \in D(U_s)$ such that $U_sg \in D(U_t)$ and $U_0 = \mathrm{Id}$.
\end{remark}

For discrete time systems the operator $U_f$ contains already all information about the dynamical system, i.e. the evolution $U_{f^n} = U_f^n$ for $n \in \N$. For continuous time systems this role is played be the infinitesimal generator.

\begin{definition}[Generator]
    The generator $G:D(G) \rightarrow Y$ of a semigroup $(T_t)_{t \in \R_+}$ on a normed space $Y$ is defined by
    \begin{equation}\label{eq:Generator}
        Gg := \lim\limits_{t \rightarrow 0} \frac{T_t g - g}{t}
    \end{equation}
    for $g \in D(G) := \{ g \in \bigcap\limits_{t \in \R_+} D(T_t): \lim\limits_{t \searrow 0} \frac{T_t g - g}{t} \text{ exists}\}$.
\end{definition}

The generator describes differentiability of the map $t \mapsto T_tg$ for $g \in D(G)$. A more fundamental property is continuity of this map. A semigroup $(T_t)_{t \in \R_+}$ on a space $V$ is called strongly continuous if $T_tv \rightarrow T_0v = v$ as $t \rightarrow 0$ for all $v \in V$.

\begin{remark}\label{rem:Perron-FrobSemigroupStronglyContonSpanx}
    If $\varphi$ and the map $x \mapsto k(x,\cdot)$ are continuous then the Perron-Frobenius semigroup $(K_t)_{t \in \R_+}$ is strongly continuous on $\SpanX$. This follows because for any $n \in \N$, $a_1,\ldots,a_n \in \R$ respectively $\mathbb{C}$ and $x_1,\ldots,x_n$ we have
    \begin{eqnarray*}
        \|(K_t - \mathrm{Id}) \left(\sum\limits_{i = 1}^n a_i k(x_i,\cdot) \right)\| & \leq & \sum\limits_{i = 1}^n |a_i| \|k(\varphi_t(x_i),\cdot) - k(x_i,\cdot)\|
    \end{eqnarray*}
    which converges to $0$ as $t \rightarrow 0$, due to continuity of $\varphi$ and $x \mapsto k(x,\cdot)$.
\end{remark}

For strongly continuous semigroups of bounded operators, the generator characterizes the semigroup uniquely.\cite{EngelNagel} Hence it allows to investigate the semigroup by investigating a single object. Strongly continuous semigroups $(T_t)_{t\in \R_+}$ of bounded operators satisfy $\frac{\mathrm{d}}{\mathrm{dt}}T_t = GT_t$ (in a certain sense\cite{EngelNagel}). So it is not surprising that there is a close connection between the ``generator" of the dynamics $f$ and the generator $A$ of the Koopman semigroup. When point evaluations are continuous then for each $g \in D(A)\cap \C^1(X)$ and $x \in X$ we get
\begin{eqnarray}\label{eq:GeneratorDirectionalDerivative}
    Ag(x) & = & \lim\limits_{t \rightarrow 0} \frac{U_tg(x) - g(x)}{t} = \lim\limits_{t \rightarrow 0} \frac{g(\varphi_t(x)) - g(x)}{t}\notag\\
    & = & \frac{\mathrm{d}}{\mathrm{dt}}\big|_{t = 0} g(\varphi_t(x)) = Dg(x) \frac{\mathrm{d}}{\mathrm{dt}}\big|_{t = 0} \varphi_t(x)\\
    & = & Dg(x) f(x).\notag
\end{eqnarray}

\begin{remark}\label{rem:GeneratorClosed}
    In the case that $Y$ is an RKHS consisting of continuously differentiable functions the generator $A$ from (\ref{eq:GeneratorDirectionalDerivative}) is a closed operator\cite{RosenfeldSystemIdentification} (see Theorem 4.2 in the literature).
\end{remark}

\subsection{Koopman and Perron-Frobenius semigroups on RKBS}

In the continuous time case, we have a family of Koopman respectively Perron-Frobenius operators -- the so-called Koopman semigroup respectively Perron-Frobenius semigroup. In contrast to the discrete time case, we want to investigate the infinitesimal generator in addition to the semigroup. First, we state some elements on which the infinitesimal generator of the Perron-Frobenius semigroup acts.\cite{RosenfeldSystemIdentification} Under additional regularity assumptions on the kernel, namely being $\C^1$, we show and use that the generator of the Perron-Frobenius semigroup acts on the kernel functions $k(x,\cdot)$ as well.

\begin{proposition}\label{prop:RKBSKoopmanSemigroup}
    Let $\RKBS$ be an RKBS with kernel $k$ and $(U_t)_{t\in \R_+}$ be the Koopman semigroup for a dynamical system with semiflow $\varphi_t$. Let $\{k(x,\cdot): x \in X\}$ be linearly independent. Then we define the Perron-Frobenius semigroup of linear operators $(K_t)_{t \in \R_+}$ with $K_t:\SpanX \rightarrow \SpanX$ for $t \in \R_+$ by linearly extending
    \begin{equation}
        K_tk(x,\cdot) = k(\varphi_t(x),\cdot).
    \end{equation}
    Further $K_t' = U_t$ for $t \in \R_+$.
\end{proposition}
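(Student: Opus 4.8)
The plan is to reduce everything to the discrete-time statements already established in Section \ref{sec:Koopman_Op_RKBS}, exploiting the fact that for each fixed $t \in \R_+$ the time-$t$ map $\varphi_t:X \rightarrow X$ is itself an admissible discrete dynamics. Concretely, I would first observe that $K_t$ as defined here is precisely the discrete Perron-Frobenius operator $K_{\varphi_t}$ from Definition \ref{def:KoopmanPerronFrobeniusRKBS} associated with $\varphi_t$, and likewise that the semigroup operator $U_t$ from (\ref{eq:KoopmanContinuous}) coincides with the discrete Koopman operator $U_{\varphi_t}$, since both its action $g \mapsto g \circ \varphi_t$ and its domain $\{g \in \B: g \circ \varphi_t \in \B\}$ match the discrete definition verbatim. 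Because $\{k(x,\cdot) : x \in X\}$ is assumed linearly independent (Assumption \ref{Assumption:SpanLinearIndep}), the assignment $k(x,\cdot) \mapsto k(\varphi_t(x),\cdot)$ extends unambiguously to a linear operator on $\SpanX$, exactly as in the discrete case; this yields well-definedness of each $K_t$ together with the fact that $K_t$ maps $\SpanX$ into itself.

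Next I would verify the semigroup structure on $\SpanX$, where $K_t$ is defined. The identity $K_0 = \mathrm{Id}$ follows from $\varphi_0(x) = x$ in (\ref{eq:SemiflowProp}). For the composition law, the semiflow property $\varphi_{t+s} = \varphi_t \circ \varphi_s$ combined with the functoriality statement $K_f K_{\tilde f} = K_{f \circ \tilde f}$ from Theorem \ref{ElementaryPropertiesKoopmanRKHS} (point 1) gives
\begin{equation*}
    K_t K_s = K_{\varphi_t} K_{\varphi_s} = K_{\varphi_t \circ \varphi_s} = K_{\varphi_{t+s}} = K_{t+s},
\end{equation*}
so that $(K_t)_{t \in \R_+}$ is indeed a one-parameter semigroup of operators on $\SpanX$.

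Finally, the adjoint relation is immediate from the corresponding single-map result. Since $K_t = K_{\varphi_t}$ and Assumption \ref{Assumption:SpanLinearIndep} holds for the fixed map $\varphi_t$, Lemma \ref{lem:PerronFrobeniusRKBS} applied to $f = \varphi_t$ shows that $K_{\varphi_t}$ is densely defined with respect to $\langle\cdot,\cdot\rangle$ and that its adjoint with respect to the bilinear form satisfies $K_{\varphi_t}' = U_{\varphi_t}$. Rewriting this in semigroup notation gives $K_t' = U_t$ for every $t \in \R_+$, as claimed.

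I do not expect a genuine obstacle here: the content is entirely a bookkeeping reduction to the fixed-map results of Lemma \ref{lem:PerronFrobeniusRKBS} and Theorem \ref{ElementaryPropertiesKoopmanRKHS}. The only point deserving care is to confirm that the continuous-time definitions of $U_t$ and $K_t$ literally instantiate their discrete-time counterparts for the map $\varphi_t$ — in particular that $D(U_t)$ coincides with $D(U_{\varphi_t})$ and that $\varphi_t$ maps $X$ into $X$ (which is guaranteed by the positive invariance in Assumption \ref{Assumption:DynSysContTime}) — so that those earlier results apply without modification.
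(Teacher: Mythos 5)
Your proposal is correct and follows essentially the same route as the paper: identify $U_t = U_{\varphi_t}$ and $K_t = K_{\varphi_t}$ for the fixed map $\varphi_t$ and invoke the discrete-time adjoint result (Lemma \ref{lem:PerronFrobeniusRKBS}). Your additional verification of the semigroup law via Theorem \ref{ElementaryPropertiesKoopmanRKHS} point 1 is a harmless bonus not required by the statement.
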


\begin{proof}
    For each $t$ the operator $U_t$ coincides with the composition operator $U_{\varphi_t}$ from Section \ref{sec:Koopman_Op_RKBS}. So the result follows from Definition \ref{def:KoopmanPerronFrobeniusRKBS} and Lemma \ref{lem:DenselyDefAdjoint}.
\end{proof}

\begin{remark}
    As mentioned in the proof of Proposition \ref{prop:RKBSKoopmanSemigroup} for each $t \in \R_+$ the operator $U_t$ coincides with $U_{\varphi_t}$ from Section \ref{sec:Koopman_Op_RKBS}. Similarly for $K_t = K_{\varphi_t}$. In particular Theorem \ref{ElementaryPropertiesKoopmanRKHS} holds for $U_t$ and $K_t$ for each $t \in \R_+$.
\end{remark}

In the following we investigate the infinitesimal generator $C$ of $(K_t)_{t \in \R_+}$, i.e.
\begin{equation}\label{eq:DefGeneratorK}
    Cg := \lim\limits_{t \rightarrow 0} \frac{1}{t} \left(K_tg - g\right)
\end{equation}
whenever the limit exists. In the RKHS case,  it was presented that certain path-integrals are elements on which the infinitesimal generator $C$ of $(K_t)_{t \in \R_+}$ acts naturally\cite{RosenfeldSystemIdentification}. 
Those path-integrals are defined in the following Definition \ref{Def:IntegralFunctional}.

\begin{definition}\label{Def:IntegralFunctional}
    Let $T > 0$ and $t\mapsto k(\varphi_t(x),\cdot) \in \B'$ be continuous on $[0,T]$. For $x \in X$ and $I_{T,x} \in \B^*$ be defined by
    \begin{equation}
        I_{T,x} g := \int\limits_0^T g(\varphi_t(x)) \; dt.
    \end{equation}
    We can identify $I_{T,x}$ with the element in $\B'$ given by
    \begin{equation}\label{eq:Defb'TX}
        b'_{T,x} := \int\limits_0^T k(\varphi_t(x),\cdot) \; dt.
    \end{equation}
\end{definition}

\begin{remark}
    The continuity assumption in Definition \ref{Def:IntegralFunctional} is used to guarantee that the (Riemann) integral (\ref{eq:Defb'TX}) exists. In order to weaken the regularity on the feature map $x \mapsto k(x,\cdot)$, Bochner's theorem on Bochner integrals can be evoked -- in our case of RKBS, this would typically require more regularity of the space $\B$ and $\B'$, such as reflexivity for example.
\end{remark}

\begin{remark}
    If $x \mapsto k(x,\cdot) \in \B'$ is continuous, $X \subset \R^n$ and $f$ is locally Lipschitz then the flow map $\varphi$ is continuous, and hence $t \mapsto k(\varphi_t(x),\cdot)$ is continuous.
\end{remark}

\begin{assumption}\label{Ass:Continuity}
    We assume that $\varphi$ and $x \mapsto k(x,\cdot) \in \B'$ are continuous.
\end{assumption}

We extend $K_t$ to $I_{T,x}$ as in Theorem \ref{ElementaryPropertiesKoopmanRKHS} 8. by
\begin{equation}\label{eq:KonIntegral}
    K_t I_{T,x} = \int\limits_{0}^T k(\varphi_{t+s}(x),\cdot) \; ds.
\end{equation}
To guarantee that (\ref{eq:KonIntegral}) is well defined we will use the universal property.

\begin{assumption}\label{Ass:UniversalProp}
    We assume that $X$ is compact and $\B$ satisfies the universal property from Definition \ref{Def:UnivProperty}, i.e. that $\B$ is dense in $\C(X)$.
\end{assumption}

\begin{lemma}\label{lem:ITxDomainKt}
    Under Assumptions \ref{Assumption:SpanLinearIndep}, \ref{Assumption:DynSysContTime}, \ref{Ass:Continuity} and \ref{Ass:UniversalProp} the term $K_tI_{T,x}$ from (\ref{eq:KonIntegral}) is well defined.
\end{lemma}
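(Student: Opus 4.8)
The plan is to recognize that $I_{T,x}$, identified with $b'_{T,x}\in\B'$ via (\ref{eq:Defb'TX}), already lies in the domain $D$ from (\ref{eq:DomainExtensionK}) on which the Perron-Frobenius operator was extended in Theorem \ref{ElementaryPropertiesKoopmanRKHS} point 8. Then (\ref{eq:KonIntegral}) is simply the value produced by that extension, and well-definedness is inherited rather than proved from scratch. The bridge is a pushforward measure: I would let $\mu_{T,x}\in M(X)$ be the image of the Lebesgue measure on $[0,T]$ under the path map $s\mapsto\varphi_s(x)$. By Assumption \ref{Assumption:DynSysContTime} the path remains in $X$, by Assumption \ref{Ass:Continuity} it is continuous, and its total mass is $T$; hence $\mu_{T,x}$ is a finite Borel measure supported on the compact image $\{\varphi_s(x):s\in[0,T]\}\subset X$.

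First I would establish the change-of-variables identity
\begin{equation*}
    b'_{T,x}=\int\limits_0^T k(\varphi_s(x),\cdot)\;ds=\int\limits_X k(y,\cdot)\;d\mu_{T,x}(y),
\end{equation*}
which exhibits $I_{T,x}$ as an element of $D$. The continuity of $t\mapsto k(\varphi_t(x),\cdot)$ demanded in Definition \ref{Def:IntegralFunctional} is furnished by Assumption \ref{Ass:Continuity}, so the (Riemann) integral exists in $\B'$.

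Next I would invoke Theorem \ref{ElementaryPropertiesKoopmanRKHS} point 8. Since $K_t=K_{\varphi_t}$, Assumption \ref{Ass:Continuity} places us in the situation of condition \ref{5b} (continuity of $x\mapsto k(x,\cdot)$ and of $\varphi_t$), and Assumption \ref{Ass:UniversalProp} supplies compactness of $X$ together with the universal property; thus the extension $\bar{K}_{\varphi_t}$ is a well-defined operator on $D$. Applying it to $I_{T,x}$ and then using the semiflow property $\varphi_t(\varphi_s(x))=\varphi_{t+s}(x)$ from Assumption \ref{Assumption:DynSysContTime} gives
\begin{equation*}
    \bar{K}_{\varphi_t}(I_{T,x})=\int\limits_X k(\varphi_t(y),\cdot)\;d\mu_{T,x}(y)=\int\limits_0^T k(\varphi_{t+s}(x),\cdot)\;ds,
\end{equation*}
which is precisely (\ref{eq:KonIntegral}). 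Hence $K_tI_{T,x}$ is well defined and coincides with the extension from Theorem \ref{ElementaryPropertiesKoopmanRKHS}.

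I expect the only genuine work to be the first step, namely verifying that $\mu_{T,x}$ is a bona fide element of $M(X)$ and that the pushforward identity places $I_{T,x}$ inside $D$. The delicate content of well-definedness, i.e. uniqueness of the representation $\int_X k(y,\cdot)\,d\mu(y)$ (proved in point 8 via the universal property and the Riesz-Markov theorem), I would reuse rather than reprove; so the substantive part here is the measure-theoretic identification of the path integral with an integral against a Borel measure on the compact set $X$.
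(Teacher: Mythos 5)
Your proposal is correct and follows essentially the same route as the paper: the paper's proof likewise identifies $I_{T,x}$ with $\int_X k(y,\cdot)\,d\mu(y)$ for the measure $\mu$ acting by $\int_X g\,d\mu = \int_0^T g(\varphi_s(x))\,ds$ (your pushforward $\mu_{T,x}$ is exactly this measure) and then concludes by Theorem \ref{ElementaryPropertiesKoopmanRKHS} point 8. Your version merely spells out the pushforward construction and the semiflow computation that the paper leaves implicit.
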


\begin{proof}
    We want to argue as in Theorem \ref{ElementaryPropertiesKoopmanRKHS} 8. Therefore, it suffices to note that $I_{T,x} = \int\limits_X k(y,\cdot) \; d\mu(y)$ for the measure $\mu$ given the action $\int\limits_X g \; d\mu = \int\limits_0^T g(\varphi_t(x)) \; dt$ for $g \in \C(X)$. The result follows from Theorem \ref{ElementaryPropertiesKoopmanRKHS} 8.
\end{proof}


\begin{remark}
    If $x \in X$ is a periodic point, i.e. there exists $P > 0$ with $\varphi_P(x) = x$, then $K_tI_{P,x} = I_{P,x}$ for all $t \in \R_+$. The discrete analog of this result holds for discrete systems.
\end{remark}

We can show that $I_{T,x}$ is contained in the domain of the generator of the Perron-Frobenius semigroup.\cite{Rosenfeld}

\begin{proposition}
    Under Assumptions \ref{Assumption:SpanLinearIndep}--4, for $T > 0$ and $x \in X$, we have $I_{T,x} \in D(C)$, where $C$ denotes the generator of the Perron-Frobenius semigroup. In particular, $C$ is densely defined.
\end{proposition}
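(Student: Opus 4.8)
The plan is to evaluate the generator limit (\ref{eq:DefGeneratorK}) directly on $I_{T,x}$, exploiting the explicit action of $K_t$ on these path integrals recorded in (\ref{eq:KonIntegral}), which is legitimate because $K_t I_{T,x}$ is well defined by Lemma \ref{lem:ITxDomainKt}. Writing $I_{T,x} = b'_{T,x} = \int_0^T k(\varphi_s(x),\cdot)\,ds$ and using (\ref{eq:KonIntegral}), I would first form the difference
\begin{equation*}
    K_t I_{T,x} - I_{T,x} = \int_0^T k(\varphi_{t+s}(x),\cdot)\,ds - \int_0^T k(\varphi_s(x),\cdot)\,ds .
\end{equation*}
Performing the change of variables $u = t+s$ in the first integral and telescoping the two integration ranges turns this into the two boundary contributions
\begin{equation*}
    K_t I_{T,x} - I_{T,x} = \int_T^{t+T} k(\varphi_u(x),\cdot)\,du - \int_0^t k(\varphi_u(x),\cdot)\,du .
\end{equation*}

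Next I would divide by $t$ and let $t \searrow 0$. Since $u \mapsto k(\varphi_u(x),\cdot) \in \B'$ is norm-continuous under Assumption \ref{Ass:Continuity}, the fundamental theorem of calculus for continuous $\B'$-valued (Riemann) integrands applies to each boundary term, yielding $\tfrac{1}{t}\int_T^{t+T} k(\varphi_u(x),\cdot)\,du \to k(\varphi_T(x),\cdot)$ and $\tfrac{1}{t}\int_0^t k(\varphi_u(x),\cdot)\,du \to k(\varphi_0(x),\cdot) = k(x,\cdot)$ in the norm of $\B'$. Hence the limit in (\ref{eq:DefGeneratorK}) exists and
\begin{equation*}
    C I_{T,x} = k(\varphi_T(x),\cdot) - k(x,\cdot),
\end{equation*}
which establishes $I_{T,x} \in D(C)$.

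For the density assertion I would argue with respect to $\langle\cdot,\cdot\rangle$, in the sense of (\ref{equ:DenseB'}), consistently with how density is used for $K_f$ in Lemma \ref{lem:PerronFrobeniusRKBS}. Suppose $g \in \B$ satisfies $\langle g, I_{T,x}\rangle = 0$ for all $T > 0$ and all $x \in X$. Unfolding the pairing through the identification in Definition \ref{Def:IntegralFunctional} gives $\langle g, I_{T,x}\rangle = I_{T,x}\,g = \int_0^T g(\varphi_t(x))\,dt = 0$. Dividing by $T$ and letting $T \searrow 0$, the continuity of $t \mapsto g(\varphi_t(x))$ forces $g(x) = g(\varphi_0(x)) = 0$ for every $x \in X$, so $g = 0$. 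Therefore $\{I_{T,x} : T > 0,\ x \in X\} \subset D(C)$ is dense with respect to $\langle\cdot,\cdot\rangle$, and a fortiori so is $D(C)$.

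The step I expect to require the most care is the passage to the limit \emph{in the norm of} $\B'$: one must guarantee that the two boundary integrals converge in $\B'$ rather than merely weakly, and this is precisely where Assumption \ref{Ass:Continuity} is indispensable, as it makes $u \mapsto k(\varphi_u(x),\cdot)$ a norm-continuous $\B'$-valued curve and thereby licenses the Riemann-integral form of the fundamental theorem of calculus. A secondary point worth flagging explicitly is that the telescoping identity above may only be written once $K_t I_{T,x}$ is known to be well defined, which is exactly the content of Lemma \ref{lem:ITxDomainKt} and rests on the universal property in Assumption \ref{Ass:UniversalProp}.
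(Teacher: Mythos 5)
Your computation of $C I_{T,x}$ is exactly the paper's argument: well-definedness of $K_tI_{T,x}$ via Lemma \ref{lem:ITxDomainKt}, the change of variables and telescoping of the integration ranges, and passage to the limit of the two boundary terms using norm-continuity of $u \mapsto k(\varphi_u(x),\cdot)$ to conclude $CI_{T,x} = k(\varphi_T(x),\cdot) - k(x,\cdot)$. Where you diverge is the density step: the paper observes that $\frac{1}{T}I_{T,x} \in D(C)$ and that $\frac{1}{T}I_{T,x} \to k(x,\cdot)$ as $T \searrow 0$, so the closure of $D(C)$ contains $\SpanX$ and density follows; you instead verify the annihilator condition (\ref{equ:DenseB'}) directly on the family $\{I_{T,x}\}$, showing $\langle g, I_{T,x}\rangle = \int_0^T g(\varphi_t(x))\,dt = 0$ for all $T,x$ forces $g = 0$ by a Lebesgue-point argument. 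Both are valid; your route has the small advantage of working purely at the level of the bilinear form (the notion of density the paper actually uses for operators on $\B'$) and only needs continuity of $t \mapsto g(\varphi_t(x))$, which holds since $\B \subset \C(X)$ under Assumption \ref{Ass:UniversalProp}, whereas the paper's route additionally yields the useful fact that every $k(x,\cdot)$ lies in the norm closure of $D(C)$.
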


\begin{proof}
    By Lemma \ref{lem:ITxDomainKt} the element $I_{T,x}$ is contained in the domain of $K_t$ for all $t \in \R_+$. For the generator of the Perron-Frobenius semigroup, we get as $t \rightarrow 0$
    \begin{eqnarray*}
        \frac{1}{t}\left(K_t I_{T,x} - I_{T,x}\right) & = & \frac{1}{t}\left( \int\limits_0^T k(\varphi_{t+s}(x),\cdot) - k(\varphi_{s}(x),\cdot) \; ds \right)\\
        & = & \frac{1}{t}\left( \int\limits_T^{T+t} k(\varphi_t(x),\cdot) \; ds - \int\limits_0^t k(\varphi_t(x),\cdot) \; ds \right)\\
        & \rightarrow & k(\varphi_T(x),\cdot) - k(x,\cdot),
    \end{eqnarray*}
    i.e. $I_{T,x} \in D(C)$ and $CI_{T,x} = k(\varphi_T(x),\cdot) - k(x,\cdot)$. To check that $C$ is densely defined let $x \in X$. Since $D(C)$ is a linear subspace we have $\frac{1}{T} I_{T,x} \in D(C)$ for all $T  > 0$ and by definition of $I_{T,x}$ we get $k(x,\cdot) = \lim\limits_{T \rightarrow 0} \frac{1}{T} I_{T,x} \in \overline{D(C)}$. Hence $C$ is densely defined.
\end{proof}

\begin{remark}
    To treat the Perron-Frobenius semigroup from a semigroup perspective we consider the closures of $K_t$ and $C$ (if they exist). Closedness or closeability of the operator $C$ is less accessible due to the explicit definition of $K_t$ only on the elements $k(x,\cdot)$. In particular generator theorems such as the Hille-Yosida theorem\cite{EngelNagel} provide a less accessible approach. On the contrary closedness of the generator of the Koopman semigroup is known for RKHS by Remark \ref{rem:GeneratorClosed} but -- as typical for the Koopman operator -- we lack a-priori information about the domain of the generator.
\end{remark}

For Proposition \ref{prop:LumerPhillipsDirectApplication} we restrict to RKHS $\H$ and fix the following preliminaries. Let $X \subset \R^n$ be open and let the kernel $k$ be $\C^1$. Let $\partial_{x_i}k$ denote the derivative of $k$ with respect to the first variable in direction of the $i$-th standard basis vector $e_i$. Then we have\cite{SS16} $\partial_{x_i} k(x,\cdot) \in \H$ for all $i = 1,\ldots,n$ and fixed $x$, and further
\begin{equation}\label{eq:convergencePartialDerivative}
    \partial_{x_i} k(x,\cdot) = \lim\limits_{h \rightarrow 0} \frac{1}{h}(k(x+he_i,\cdot) - k(x,\cdot))
\end{equation}
converges in $\H$ (see the literature in Theorem 2.5) -- in other words the feature map $x \mapsto k(x,\cdot) \in \H$ is $\C^1$. In particular we get that for fixed $x$ the map $t \mapsto K_tk(x,\cdot) = k(\varphi_t(x),\cdot)$ is continuously differentiable if $\varphi$ is the flow map for $\dot{y} = f(y)$ for a locally Lipschitz continuous vector field $f = (f_1,\ldots,f_n)$. Hence we get that $k(x,\cdot) \in D(C)$ for the generator $C$ of the Perron-Frobenius semigroup and
\begin{align}\label{eq:PerronFrobGeneratork(x)}
    Ck(x,\cdot) &= \frac{\mathrm{d}}{\mathrm{dt}}\bigg|_{t = 0} K_tk(x,\cdot) = \frac{\mathrm{d}}{\mathrm{dt}}\bigg|_{t = 0} k(\varphi_t(x),\cdot)\notag\\
    &= \sum\limits_{l = 1}^n \partial_{x_l} k(x,\cdot) f_l(x) \in \SpanX \subset \H.
\end{align}
On the other hand, for the generator $A$ of the Koopman semigroup it is not clear whether $k(x,\cdot)$ is in the domain of $A$. If so, it acts on $k(x,\cdot)$ by $Ak(x,\cdot) = \sum\limits_{i = 1}^n \partial_{y_i} k(x,\cdot) f_i(\cdot)$, where $\partial_{y_i}$ denotes the derivative of $k$ with respect to the second variable in the direction of $e_i$. Hence it is not a-priori clear whether $Ak(x,\cdot)$ is an element of $\H$. Therefore, in Proposition \ref{prop:LumerPhillipsDirectApplication} we argue via the Perron-Frobenius semigroup.

\begin{proposition}\label{prop:LumerPhillipsDirectApplication}
    Assume Assumption \ref{Assumption:SpanLinearIndep} and Assumption \ref{Assumption:DynSysContTime}. Let $X \subset \R^n$ be open and $\H$ be an RKHS on $X$ with kernel $k \in \C^1(X\times X)$. Then for $\omega > 0$ the following are equivalent
    \begin{enumerate}
        \item \label{enum:KoopmansemigroupBounded} The Koopman semigroup is a strongly continuous semigroup with $\|U_t\| \leq e^{\omega t}$ for all $t \in \R_+$
        \item \label{enum:PerronFrobsemigroupBounded} The Perron-Frobenius semigroup can be extended to a strongly continuous semigroup $(\bar{K}_t)_{t \in \R_+}$ of bounded operators on $\H$ with $\|\bar{K}_t\| \leq e^{\omega t}$ for all $t \in \R_+$
        \item \label{enum:GeneratorDissipative} For all $n \in \N$, $a_1,\ldots,a_n \in \R$ respectively $\mathbb{C}$ and $x_1,\ldots,x_n \in X$ we have
        \vspace{-3mm}
    \begin{equation}\label{eq:LumerPhillipsDissipative}
       \hspace{8.1mm} \mathrm{Re} \sum\limits_{i,j,l} a_i \overline{a}_j f_l(x_i) \partial_{x_l} k(x_i,x_j) \leq \omega \sum\limits_{i,j = 1}^n a_i \overline{a}_j k(x_i,x_j).
    \end{equation}
    \end{enumerate}
\end{proposition}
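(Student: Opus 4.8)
The plan is to read condition (\ref{eq:LumerPhillipsDissipative}) as the quasi-dissipativity of the Perron-Frobenius generator $C$ and then run the Lumer-Phillips mechanism, using the adjoint relation $U_t = K_t'$ from Proposition \ref{prop:RKBSKoopmanSemigroup} to pass between the two semigroups. First I would make the connection to $C$ explicit. For $g = \sum_i a_i k(x_i,\cdot) \in \SpanX$ the reproducing property gives $\|g\|^2 = \sum_{i,j} a_i \overline{a}_j k(x_i,x_j)$, and since $\langle \partial_{x_l} k(x_i,\cdot), k(x_j,\cdot)\rangle = \partial_{x_l} k(x_i,x_j)$, formula (\ref{eq:PerronFrobGeneratork(x)}) yields
\[
\langle Cg,g\rangle = \sum_{i,j,l} a_i \overline{a}_j f_l(x_i)\, \partial_{x_l} k(x_i,x_j).
\]
Taking real parts shows that condition \ref{enum:GeneratorDissipative} is exactly the statement that $\mathrm{Re}\langle Cg,g\rangle \leq \omega\|g\|^2$ for all $g$ in the dense subspace $\SpanX$, i.e. that $C - \omega\,\mathrm{Id}$ is dissipative.

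For \ref{enum:GeneratorDissipative} $\Rightarrow$ \ref{enum:PerronFrobsemigroupBounded} I would exploit that $\SpanX$ is invariant under $K_t$ and that, by (\ref{eq:PerronFrobGeneratork(x)}) together with the chain rule for $t \mapsto k(\varphi_t(x),\cdot)$, one has $\frac{\mathrm{d}}{\mathrm{dt}} K_t g = C K_t g$ for $g \in \SpanX$. The dissipativity estimate then gives $\frac{\mathrm{d}}{\mathrm{dt}}\|K_t g\|^2 = 2\,\mathrm{Re}\langle C K_t g, K_t g\rangle \leq 2\omega\|K_t g\|^2$, so Gronwall's inequality yields $\|K_t g\| \leq e^{\omega t}\|g\|$ on $\SpanX$. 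As $\SpanX$ is dense in $\H$, each $K_t$ extends to a bounded operator $\bar{K}_t$ with $\|\bar{K}_t\| \leq e^{\omega t}$; the semigroup law and, via Remark \ref{rem:Perron-FrobSemigroupStronglyContonSpanx} together with the uniform bound on compact time intervals, strong continuity extend from the dense subspace to all of $\H$. Here the explicit availability of $K_t$ on $\SpanX$ lets me bypass the usual separate verification of the range condition $\mathrm{Range}(\lambda - C) = \H$.

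The converse \ref{enum:PerronFrobsemigroupBounded} $\Rightarrow$ \ref{enum:GeneratorDissipative} is the standard fact that the generator $\bar{C}$ of a quasi-contraction semigroup is quasi-dissipative: from $|\langle \bar{K}_t g, g\rangle| \leq e^{\omega t}\|g\|^2$ one obtains $\mathrm{Re}\langle \bar{C} g, g\rangle \leq \omega\|g\|^2$ in the limit $t \searrow 0$, and since $\bar{K}_t k(x,\cdot) = k(\varphi_t(x),\cdot)$ shows $\bar{C}$ agrees with $C$ on $\SpanX$, this is precisely \ref{enum:GeneratorDissipative}. For \ref{enum:KoopmansemigroupBounded} $\Leftrightarrow$ \ref{enum:PerronFrobsemigroupBounded} I would use $U_t = K_t'$: in the Hilbert space $\H$ the operator $U_t$ is the adjoint of $\bar{K}_t$, so $\|U_t\| = \|\bar{K}_t\|$ gives equivalence of the norm bounds, while reflexivity of $\H$ ensures that the adjoint of a strongly continuous semigroup is again strongly continuous, giving equivalence of strong continuity.

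I expect the main obstacle to be the \ref{enum:GeneratorDissipative} $\Rightarrow$ \ref{enum:PerronFrobsemigroupBounded} direction: showing that the explicit, merely densely defined generator $C$ genuinely produces a semigroup on all of $\H$. The delicate points are the identity $\frac{\mathrm{d}}{\mathrm{dt}}K_t g = C K_t g$ on $\SpanX$, which relies on the $\C^1$ regularity of the feature map from (\ref{eq:convergencePartialDerivative}), and the transfer of strong continuity from $\SpanX$ to $\H$, for which the uniform bound $\|\bar{K}_t\| \leq e^{\omega t}$ on bounded time intervals is essential.
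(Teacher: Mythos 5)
Your proposal is correct and follows essentially the same route as the paper: the identity $\tfrac{\mathrm{d}}{\mathrm{dt}}\|K_tg\|^2 = 2\,\mathrm{Re}\langle CK_tg,K_tg\rangle$ on the $K_t$-invariant dense subspace $\SpanX$, Gronwall plus density to get the bounded strongly continuous extension, differentiation of the norm inequality at $t=0$ for the converse (your "quasi-dissipativity of the generator" phrasing is the same computation), and the adjoint relation with reflexivity of $\H$ for the equivalence of \ref{enum:KoopmansemigroupBounded} and \ref{enum:PerronFrobsemigroupBounded}. No gaps worth flagging.
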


\begin{proof}
Since $U_t = K_t^*$ for all $t \in \R_+$, i.e. the Koopman semigroup is the adjoint semigroup of the Perron-Frobenius semigroup, the strong continuity of one semigroup implies the strong continuity of the other\cite{EngelNagel} (page 9 in the literature) because $\H$ is reflexive. In the rest of the proof, we show that \ref{enum:PerronFrobsemigroupBounded}. and \ref{enum:GeneratorDissipative}. are equivalent. The essential observation is that for $g \in \SpanX$ we have by (\ref{eq:PerronFrobGeneratork(x)}) for all $t \in \R_+$
\begin{eqnarray}
    \frac{\mathrm{d}}{\mathrm{dt}} \|K_tg\|^2 & = &  \frac{\mathrm{d}}{\mathrm{dt}} \langle K_tg, K_tg\rangle = \langle \frac{\mathrm{d}}{\mathrm{dt}} K_tg,K_tg\rangle + \langle K_tg,\frac{\mathrm{d}}{\mathrm{dt}}K_tg\rangle\notag\\
    & = & \langle CK_tg,K_t\rangle + \langle K_tg,CK_tg\rangle\\
    & = & 2 \mathrm{Re}\langle CK_tg,K_t\rangle\notag.
\end{eqnarray}
Representing $g$ as $g:= \sum\limits_{i = 1}^n a_i k(x_i,\cdot))$ and evaluating in $t = 0$ gives
\begin{eqnarray}\label{eq:derivNormKt^2}
       \frac{\mathrm{d}}{\mathrm{dt}} \|K_tg\|^2\big|_{t = 0} & = & 2\mathrm{Re} \langle \sum\limits_{i = 1}^n a_i \sum\limits_{l = 1}^n \partial_{x_l} k(x_i,\cdot) f_l(x_i), \sum\limits_{j = 1}^n a_jk(x_j,\cdot)\rangle \notag\\
       & = & 2\mathrm{Re} \sum\limits_{i,j,l} a_i \overline{a}_j f_l(x_i) \partial_{x_l} k(x_i,x_j).
\end{eqnarray}
Equation (\ref{eq:derivNormKt^2}) is the central object connecting \ref{enum:PerronFrobsemigroupBounded}. and \ref{enum:GeneratorDissipative}. We begin by showing that \ref{enum:GeneratorDissipative} implies \ref{enum:PerronFrobsemigroupBounded}. To do so, we first show that $\|K_tg\| \leq e^{\omega t} \|g\|$ for all $g \in \SpanX$. Condition (\ref{eq:LumerPhillipsDissipative}) implies for $g = \sum\limits_{i = 1}^n a_i k(x_i,\cdot)$ by (\ref{eq:derivNormKt^2})
\begin{equation}\label{eq:derivKtgomega}
    \frac{\mathrm{d}}{\mathrm{dt}} \|K_tg\|^2\big|_{t = 0} \leq 2 \omega \sum\limits_{i,j = 1}^n a_i\overline{a}_j k(x_i,x_j) = 2 \omega \|g\|^2.
\end{equation}
Because $g \in \SpanX$ was arbitrary in (\ref{eq:derivKtgomega}), $\SpanX$ is $K_t$ invariant for all $t \in \R_+$ and $(K_t)_{t \in \R_+}$ is a semigroup we get for the map $u:\R_+ \rightarrow \R_+, u(t) := \|K_tg\|^2$ that
\begin{eqnarray*}
    \dot{u}(t) & = & \frac{\mathrm{d}}{\mathrm{dt}} \|K_tg\|^2 =\frac{\mathrm{d}}{\mathrm{ds}} \|K_sK_tg\|^2\big|_{s = 0} \leq 2 \omega \|K_tg\|^2 = 2 \omega u(t).
\end{eqnarray*}
By Gronwall's lemma it follows $u(t) \leq e^{2 \omega t} u(0) = e^{2 \omega t} \|g\|^2$, i.e. $\|K_tg\| = \sqrt{u(t)} \leq e^{\omega t} \|g\|$ for all $g \in \SpanX$. That shows $\|K_t\|\leq e^{\omega t}$. Further, $K_t$ is strongly continuous on $\SpanX$ by Remark \ref{rem:Perron-FrobSemigroupStronglyContonSpanx}. The continuity conditions in Remark \ref{rem:Perron-FrobSemigroupStronglyContonSpanx} are satisfied due to $k$ being continuous and $f$ being locally Lipschitz continuous.  Because $\SpanX$ is dense in $\H$, it follows then that $(K_t)_{t\in \R_+}$ can be extended to a strongly continuous semigroup on $\H$\cite{EngelNagel} with the desired growth bound (see Proposition 1.3 in the literature). For the remaining implication, \ref{enum:PerronFrobsemigroupBounded}. implies \ref{enum:GeneratorDissipative}., we argue similarly. From $\|K_t\| \leq e^{\omega t}$ we get
\begin{equation}\label{eq:AuxiliaryFunction}
 \|K_tg\|^2 \leq e^{2\omega t} \|g\|^2
\end{equation}
for all $g \in \SpanX$. Evaluating (\ref{eq:AuxiliaryFunction}) in $t = 0$ we see that both sides are equal. For the derivative with respect to $t$ in $t = 0$ this implies
\begin{equation}\label{eq:derivativeInequality}
    \frac{\mathrm{d}}{\mathrm{dt}} \|K_tg\|^2\big|_{t = 0} \leq \frac{\mathrm{d}}{\mathrm{dt}} e^{2\omega t} \|g\|^2\big|_{t = 0} = 2 \omega \|g\|^2.
\end{equation}
Choosing $g = \sum\limits_{i = 1}^n a_i k(x_i,\cdot)$ the inequality (\ref{eq:derivativeInequality}) coincides with (\ref{eq:LumerPhillipsDissipative}) by (\ref{eq:derivNormKt^2}). 
\end{proof}

\begin{remark}\label{rem:LumerPhillipsRKBSProblem}
    In Proposition \ref{prop:LumerPhillipsDirectApplication}, we made strong use of the explicit computation of
    \begin{equation}\label{eq:NormKx-Ky}
    \|\sum\limits_{i = 1}^n a_ik(x_i,\cdot)\|^2 = \sum\limits_{i = 1}^n a_i \overline{a}_j k(x_i,x_j).
    \end{equation}
    Such an explicit expression of (\ref{eq:NormKx-Ky}) is not available in RKBS in general. Hence a similar result on RKBS $\RKBS$ would require further explicit knowledge of expressing the norm in $\B'$ by $k$.
\end{remark}

The condition (\ref{eq:LumerPhillipsDissipative}) is a geometric condition that connects the dynamics $f$ with the kernel $k$. For $\omega = 0$ and $n = 1$ and real RKHS $\H$ it resembles a Lyapunov condition and at first only states that $k(\varphi_t(x),x)$ is decreasing in time. Due to the symmetry of $k$, it follows for all $x \in X$,
\begin{equation*}
    \frac{\mathrm{d}}{\mathrm{dt}}\bigg|_{t = 0} k(\varphi_t(x),\varphi_t(x)) = \nabla_x k(x,x) f(x) + \nabla_x k(x,x) f(x) \leq 0
\end{equation*}
which means that $V(x) := k(x,x) = \|k(x,\cdot)\|^2$ is a Lyapunov function. The condition (\ref{eq:LumerPhillipsDissipative}) for $\omega = 0$ extends this concept to the full RKHS because it states that $\hat{V}(g) := \frac{1}{2}\|g\|^2$ is decaying in time since for all $t \in \R_+$, we have $\frac{\mathrm{d}}{\mathrm{dt}} \hat{V}(K_tg) =\mathrm{Re} \langle CK_tg,K_tg\rangle \leq 0$, that is just a reformulation of the Perron-Frobenius semigroup being contractive on $\H$.

\section{Symmetry and sparsity patterns}\label{sec:SymmetrySparsity}

Symmetry and sparsity of dynamical systems are useful concepts to gain further insight into the evolution of a dynamical system. They describe certain invariants of the dynamical system. Knowledge of such invariants allows to deduce properties of the dynamical systems of related objects of interest, as for example attractors and invariant sets.\cite{SparsePaper, Perron-FrobSymmetry}
Furthermore, knowledge of symmetry and sparsity can and should be exploited in computations\cite{Symmetry, SparseKoopman, Perron-FrobSymmetry} for the Koopman and Perron-Frobenius operator. Particularly when the task at hand shows large computational complexity, reduction techniques are useful to reduce running time or memory limitations.

Compared to working with data directly and searching for symmetries or sparse structures in the data, we assume here the a-priori knowledge of those patterns for a dynamical system. For computations that allow to incorporate the knowledge of these patterns directly into the method (if possible) without any loss of accuracy.

In this section, we describe how the symmetry concept\cite{Symmetry} for Koopman operators carry over to RKBS. Similarly for the concept of factor systems\cite{OperatorTheoreticAspectsOfErgodicTheory} which is the notion of sparsity that we work with.

\begin{definition}[Symmetry]
    A map $\psi:X \rightarrow X$ is called a symmetry for the (discrete) dynamics induced by $f:X \rightarrow X$ if $\psi \circ f = f\circ \psi$.
\end{definition}

\begin{remark}
    Typically, the map $\psi$ is assumed to be invertible. In that case, we have 
$\psi^{-1} \circ f \circ \psi = f$.
For continuous time systems, symmetry means that $\psi \circ \varphi_t = \varphi_t \circ \psi$ for all $t \in \R_+$. In other words, $\psi$ maps solutions of the dynamical system to, again, solutions of the dynamical system. If the continuous time dynamical system is induced by the differential equation $\dot{x} = f(x)$ then an invertible smooth map $\psi:X \rightarrow X$ is a symmetry if $f = D\psi^{-1} \circ \psi \cdot f\circ \psi$. 

\end{remark}

The next proposition states that symmetries induce a commutation relation between the Koopman and Perron-Frobenius operators and their corresponding operators induced by the symmetry map.

\begin{proposition}\label{prop:Symmetry}
	Let $f:X \rightarrow X$ be the (discrete) dynamics, $\RKBS$ be an RKBS on $X$ with kernel $k$ and $\psi$ a symmetry for $f$. Let $U_\psi$ and $K_\psi$ be the Koopman and Perron-Frobenius operator with respect to $\psi$ on the RKBS. Then, the relation
	\begin{equation}\label{eq:SymmetryKoopman}
	    U_fU_\psi = U_\psi U_f
	\end{equation}
	holds on the set
	\begin{equation*}
	    \{g \in \B: g \in D(U_f) \cap D(U_\psi), U_fg \in D(U_\psi), U_\psi g \in D(U_f)\}
	\end{equation*}
	and
	\begin{equation}\label{eq:CommutingPerronFrob}
	    K_\psi K_f = K_f K_\psi \;  \text{ on } \SpanX.   
	\end{equation}
\end{proposition}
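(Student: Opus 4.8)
The plan is to handle the two commutation relations separately, since the Koopman relation is a pointwise computation on functions while the Perron-Frobenius relation lives on $\SpanX$ and reduces at once to the functorial property already established.

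For the Koopman relation (\ref{eq:SymmetryKoopman}), I would argue directly from the definition $U_fg = g \circ f$ together with associativity of composition. For $g$ in the prescribed domain set, both $U_fU_\psi g$ and $U_\psi U_f g$ are well-defined elements of $\B$: the conditions $g \in D(U_\psi)$ and $U_\psi g \in D(U_f)$ make the left-hand composition land in $\B$, while $g \in D(U_f)$ and $U_f g \in D(U_\psi)$ do the same for the right-hand side. One then computes $U_fU_\psi g = (g \circ \psi)\circ f = g \circ (\psi \circ f)$ and $U_\psi U_f g = (g\circ f)\circ \psi = g \circ (f \circ \psi)$, and the symmetry hypothesis $\psi \circ f = f \circ \psi$ forces the two to coincide. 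The only real care needed here is the domain bookkeeping, which is precisely what the somewhat elaborate domain set in the statement is designed to supply.

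For the Perron-Frobenius relation (\ref{eq:CommutingPerronFrob}), the cleanest route is to invoke the first statement of Theorem \ref{ElementaryPropertiesKoopmanRKHS}, the functorial identity $K_f K_{\tilde f} = K_{f\circ \tilde f}$. Applying it with the pair $(\psi, f)$ gives $K_\psi K_f = K_{\psi \circ f}$, and with $(f,\psi)$ gives $K_f K_\psi = K_{f\circ \psi}$; the symmetry condition $\psi \circ f = f \circ \psi$ then identifies the two composite dynamics, hence the two operators, on all of $\SpanX$. Equivalently, one can verify the identity directly on the spanning kernel functions: since $K_\psi K_f k(x,\cdot) = k((\psi\circ f)(x),\cdot)$ and $K_f K_\psi k(x,\cdot) = k((f\circ\psi)(x),\cdot)$, the symmetry assumption yields equality on each $k(x,\cdot)$, and linearity extends it to $\SpanX$; here Assumption \ref{Assumption:SpanLinearIndep} is what makes these linear extensions unambiguous. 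I do not anticipate a genuine obstacle in either part, as the entire content of the result is carried by associativity of composition and the single algebraic hypothesis $\psi \circ f = f \circ \psi$.
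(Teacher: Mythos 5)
Your proposal is correct and follows essentially the same route as the paper: the paper proves the Perron--Frobenius relation by exactly your direct computation on the kernel functions $k(x,\cdot)$ (with Assumption \ref{Assumption:SpanLinearIndep} ensuring the linear extension is well defined), and leaves the Koopman part to the associativity-of-composition argument you spell out. Your alternative derivation of (\ref{eq:CommutingPerronFrob}) from the functorial identity $K_fK_{\tilde f}=K_{f\circ\tilde f}$ of Theorem \ref{ElementaryPropertiesKoopmanRKHS} is an equally valid shortcut but amounts to the same computation.
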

\begin{proof}
This follows directly from the definition of symmetry. We only show it for the Perron-Frobenius operator. For $x \in X$ we have
\begin{align*}
K_\psi K_f k(x,\cdot) & = & K_\psi k(f(x),\cdot) = k(\psi(f(x)),\cdot) = k(f(\psi(x)),\cdot)\\
& = & K_fk(\psi(x),\cdot) = K_fK_\psi k(x,\cdot).  \hspace{20mm}\qedhere
\end{align*}
\end{proof}

Formula (\ref{eq:SymmetryKoopman}) in Proposition \ref{prop:Symmetry} is particularly useful when the domains of $U_\psi$ and $U_f$ are known. The easiest case is when both $U_\psi$ and $U_f$ induce bounded operators, i.e. when $D(U_\Psi)$ and $D(U_f)$ are the whole RKBS.

For sparsity, we follow a similar approach based on a specific sparsity pattern, i.e. factor systems\cite{OperatorTheoreticAspectsOfErgodicTheory} (see page 15 in the literature), and its application to Koopman operators.\cite{SparseKoopman}

\begin{definition}[Factor system]
    Let $f:X\rightarrow X$ be a discrete dynamical system on $X$ and $\RKBS$ be an RKBS. We call a triple $(Y,\Pi,F)$ a factor system if $Y$ is a set, $\Pi:X \rightarrow Y$ and $F:Y \rightarrow Y$ such that
    \begin{equation}
        \Pi \circ f = F \circ \Pi.    
    \end{equation}
\end{definition}

By similar arguments to the symmetry case, we get the following proposition.

\begin{proposition}\label{prop:Sparse}
    Let $f:X\rightarrow X$ be a discrete dynamical system, $(Y,\Pi,F)$ be a factor system, $\RKBS$ be an RKBS on $X$ and $(\B_Y,\B'_Y,\langle\cdot,\cdot\rangle_Y,k_Y)$ be an RKBS for $Y$. Let $K_\Pi:\SpanX \rightarrow \mathrm{Span}\{k_Y(y,\cdot): y \in Y\}$ defined by linear extension of $K_\Pi k(x,\cdot) := k_Y(\Pi(x),\cdot)$. Then
    \begin{equation}\label{eq:SparseIntertwine}
        K_\Pi K_f = K_F K_\Pi.
    \end{equation}
    For the Koopman operators $U_f$ and $U_F$ corresponding to $f$ and $F$ and $U_\Pi:D(U_\Pi) \rightarrow \B$ defined by $U_\Pi g:= g \circ \Pi$ on $D(U_\Pi) := \{g \in \B_Y: g \circ \Pi \in \B\}$ we have $U_fU_\Pi = U_\Pi U_F$ on
    \begin{equation}\label{eq:KoopmanSparseDomain}
        \{g \in \B: g \in D(U_\Pi) \cap D(U_F), D(U_\Pi) \in D(U_f), U_Fg \in D(U_\Pi)\}.
    \end{equation}
\end{proposition}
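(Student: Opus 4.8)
The plan is to mirror the proof of Proposition \ref{prop:Symmetry}, since both asserted identities collapse to the defining factor relation $\Pi \circ f = F \circ \Pi$ once it is applied to the generating elements of the relevant spaces. As a preliminary step I would record that all three operators in play are well-defined. The map $K_\Pi$ extends $k(x,\cdot) \mapsto k_Y(\Pi(x),\cdot)$ linearly over $\SpanX$, which is unambiguous precisely because Assumption \ref{Assumption:SpanLinearIndep} makes $\{k(x,\cdot): x \in X\}$ a basis of its span; similarly $K_F$ is well-defined on $\mathrm{Span}\{k_Y(y,\cdot): y \in Y\}$ by the same assumption applied to the RKBS on $Y$. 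With these in hand, the identity (\ref{eq:SparseIntertwine}) need only be checked on a generating kernel section.

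For the Perron-Frobenius identity I would fix $x \in X$ and compute both composites directly. On one side $K_\Pi K_f\, k(x,\cdot) = K_\Pi\, k(f(x),\cdot) = k_Y(\Pi(f(x)),\cdot)$, and on the other $K_F K_\Pi\, k(x,\cdot) = K_F\, k_Y(\Pi(x),\cdot) = k_Y(F(\Pi(x)),\cdot)$. The single decisive step is the factor relation, which yields $\Pi(f(x)) = F(\Pi(x))$ and hence equality of the two kernel sections; linear extension then gives (\ref{eq:SparseIntertwine}) on all of $\SpanX$. This is entirely parallel to the symmetry computation and requires no topology or estimates.

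For the Koopman identity I would unwind the definitions on the admissibility set (\ref{eq:KoopmanSparseDomain}), reading it so that the argument ranges over $g \in \B_Y$ with $g \in D(U_\Pi)\cap D(U_F)$, $U_\Pi g \in D(U_f)$ and $U_F g \in D(U_\Pi)$. For such $g$ one has $U_f U_\Pi g = (g \circ \Pi)\circ f = g \circ (\Pi \circ f)$ and $U_\Pi U_F g = (g \circ F)\circ \Pi = g \circ (F \circ \Pi)$, and the factor relation again equates the two as elements of $\B$. I expect the only genuine obstacle to be this domain bookkeeping: unlike the Perron-Frobenius operators, which act transparently on the span of kernel sections, the Koopman domains are governed by the implicit invariance conditions $h \circ f \in \B$ and $h \circ \Pi \in \B$, so the four membership requirements in (\ref{eq:KoopmanSparseDomain}) are exactly what guarantee that each intermediate function $U_\Pi g$ and $U_F g$ lands in the correct Banach space and can be composed further. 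Once that admissibility is verified, the identity itself is immediate.
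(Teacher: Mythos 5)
Your proposal is correct and matches the paper's proof: both identities are verified by applying the factor relation $\Pi \circ f = F \circ \Pi$ directly to the kernel sections $k(x,\cdot)$ (resp.\ to $g$ on the stated domain) and extending linearly, exactly as in the symmetry case. Your added remarks on well-definedness of $K_\Pi$, $K_F$ via linear independence and your corrected reading of the domain set (\ref{eq:KoopmanSparseDomain}) are sensible but do not change the argument.
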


\begin{proof}
    The proof is similar to the proof of Proposition \ref{prop:Symmetry}. For all $x \in X$ we have
    \begin{align*}
        K_\Pi K_f k(x,\cdot) &= K_\Pi k(f(x),\cdot) = k(\Pi(f(x)),\cdot) = k(F(\Pi(x)),\cdot)\\
        &= K_FK_\Pi k(x,\cdot)
    \end{align*}
    and we get (\ref{eq:SparseIntertwine}). Similarly for the Koopman operator we have for all $g$ in the set given in (\ref{eq:KoopmanSparseDomain})
    \begin{equation*}
        U_fU_\Pi g = U_f \left(g\circ \Pi\right) = g \circ \Pi \circ f = g \circ F \circ \Pi = U_\Pi U_F g. \qedhere
    \end{equation*}
\end{proof}

The commutation and intertwining relations in Propositions \ref{prop:Symmetry} and \ref{prop:Sparse}, even though being similar, should be interpreted differently. For symmetries the commutation relation (\ref{eq:CommutingPerronFrob}) implies that the operators share eigenspaces - which can be exploited for dynamic mode decomposition (DMD).\cite{Symmetry, Perron-FrobSymmetry} 
Sparsity on the other hand intends to reduce the dynamical system to another (lower dimensional) one. If the object of interest can be fully observed by the factor system doing so is computationally beneficial if $Y$ is of lower dimension than $X$ and/or $F$ and $\Pi$ are not computationally complex.\cite{SparseKoopman}


\section{Conclusion}

We present a framework for Koopman and Perron-Frobenius operators on reproducing kernel Banach spaces which naturally includes the reproducing kernel Hilbert space situation. Due to the close relation between these operators, we deduce results for the Koopman operator based on the Perron-Frobenius operator and vice versa. We extended known results on general properties of Koopman and Perron-Frobenius operators on RKHS to RKBS and added new results on this topic concerning boundedness, closeness, and the domain of these operators.

We treat discrete time and continuous time dynamical systems. The Koopman (resp. Perron-Frobenius) operators for those systems share some common properties, but while for the discrete time case, the evolution operator $U_f$ (resp. $K_f$) is sufficient to describe the evolution of the system we turn to the
infinitesimal generator of the Koopman respectively Perron-Frobenius semigroup in the continuous time case. We investigated the domains of the infinitesimal generators and state a generator result based on a geometric condition on the kernel and the dynamics.

Since the notion of RKBS is very general we did not expect strong results, and it is clear that detailed investigations for specific RKBS -- such as Fock spaces, Hardy spaces, Sobolev spaces among others -- and dynamics remain a challenging task.\cite{IIS20, DKL17, CMS, CompOpAnalyticFunctions}
This is related to the problem of kernels adapted to the dynamics. We think that the power of the Koopman resp. Perron-Frobenius operator and the RKBS structure are fully released only if the RKBS is chosen according to the dynamics. At its core, this addresses the domain of the Koopman operator on the RKBS. A negative example of Koopman operators on RKBS can be obtained easily via Example \ref{example:RKHSRn} viewing $\R^n$ as the RKBS of linear forms and using non-linear dynamics $f$. This easily leads to the trivial domain $D(U_f) = \{0\}$ for the Koopman operator. An elaborate example of analytic dynamics on $\mathbb{C}$ states that under mild assumptions on a function space of entire functions, the Koopman operator can be bounded only if the dynamics are affine.\cite{IsaoBoundedCompOp}
Hence, we advertise a fruitful combination of Koopman theory and RKBS via kernels adapted to the dynamics; one such way is to consider invariant kernels (Remark \ref{rem:InvariantKernel}). This particular approach of invariant kernels leads to stronger results but is fairly restrictive.\cite{Das} Less restrictive examples -- but examples of kernels adapted to the dynamics -- are for example composition operators on the Hardy space for analytic dynamics or composition operators on Sobolev spaces for sufficiently regular dynamics.

We show that symmetry and sparsity transfer conceptually from well known cases to the RKBS setting. This yields certain commutation and intertwining relations for Koopman resp. Perron-Frobenius operators also in the RKBS setting. This aims at applications where those structures can be used to reduce computational complexity.

Future work might investigate applications of Koopman operators on RKBS to data science as for RKHS with applications to forecasting, (optimal) control, and stability analysis.\cite{Kawahara16, Rosenfeld, DasSpectrum}  We think that inspirations from the path of understanding the Koopman (or composition) operators on specific or general (reproducing) function spaces will give further insight into the RKBS situation.\cite{OperatorTheoreticAspectsOfErgodicTheory, DKL17}

\section*{Acknowledgements}
The authors want to greatly thank Igor Mezi\'{c}. The authors have met when they were independently visiting Igor Mezi\'{c}'s group. Their understanding of Koopman theory and several parts of this text benefited a lot from his deep insights into Koopman theory. We are thankful for meeting and discussing with Igor Mezi\'{c} and his group!

The first and second authors have been supported by  JST CREST Grant, Number JPMJCR1913, Japan.
The second author has been supported by JST ACTX Grant, Number JPMJAX2004, Japan.
The third author has been supported by European Union’s Horizon 2020 research and innovation programme under the Marie Skłodowska-Curie Actions, grant agreement 813211 (POEMA).

\appendix
\section{Further notions with respect to the bilinear form}


We devote this section to notions for RKBS $\RKBS$ concerned with the bilinear form $\langle \cdot,\cdot\rangle$. That includes the adjoint operator and hence plays an important rule for connecting the Koopman and the Perron-Frobenius operator.

\begin{definition}[Universal property]\label{Def:UnivProperty}
	We say $(\B,\B',\langle \cdot,\cdot\rangle)$ has the universal property if $\B$ embeds densely into $\C(X)$, by $i:\B \rightarrow \C(X)$, $g \mapsto g$.
\end{definition}

\begin{remark}\label{rem:i*Injective}
    The universal property states that the adjoint $i^*:\C(X) \rightarrow \B^*$ is injective.
\end{remark}

\begin{definition}[Annihilator]
	Let $\B$ and $\B'$ be Banach spaces and $\langle\cdot,\cdot\rangle$ be a continuous bilinear form on $\B \times \B'$. Let $A \subset \B$ then the annihilator $A^\perp$ of $A$ is defined by
	\begin{equation}\label{equ:AnnihilatorB}
		 A^\perp := \{ b' \in \B' : \langle a,b'\rangle = 0 \text{ for all } a \in A\} \subset \B'
	\end{equation}
	and for $A \subset \B'$ then the annihilator $A_\perp$ of $A$ is defined by
	\begin{equation}\label{equ:Annihilatorperp}
		 A_\perp := \{ b \in \B : \langle b,a\rangle = 0 \text{ for all } a \in A\} \subset \B.
	\end{equation}
\end{definition}

\begin{lemma}\label{lem:Aperp}
	Let $A \subset \B$ and $C \subset \B'$ then $A^\perp \subset \B$ and $C_\perp \subset \B'$ are closed. Further $(A^\perp)_\perp \supset A$ and $(C_\perp)^\perp \supset C$.
\end{lemma}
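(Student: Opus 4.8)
The plan is to read off both conclusions directly from the annihilator definitions, using only the continuity of the bilinear form $\langle\cdot,\cdot\rangle$. Note that according to the definitions the natural memberships are $A^\perp \subset \B'$ for $A \subset \B$ and $C_\perp \subset \B$ for $C \subset \B'$; I will prove closedness in the respective spaces and the two double-annihilator inclusions in that reading.

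For closedness, the key observation is that continuity of the bilinear form makes each partial evaluation a continuous linear functional. Concretely, since there is $M > 0$ with $|\langle b, b'\rangle| \le M \|b\|_\B \|b'\|_{\B'}$, for every fixed $a \in A$ the map $b' \mapsto \langle a, b'\rangle$ is a bounded linear functional on $\B'$, so its kernel $\{b' \in \B' : \langle a, b'\rangle = 0\}$ is closed. The first step is then to write $A^\perp = \bigcap_{a \in A} \{b' \in \B' : \langle a, b'\rangle = 0\}$ and conclude that $A^\perp$, as an intersection of closed sets, is closed. The symmetric step treats $C_\perp = \bigcap_{c \in C}\{b \in \B : \langle b, c\rangle = 0\}$, where for each fixed $c \in C$ the partial map $b \mapsto \langle b, c\rangle$ is a bounded linear functional on $\B$, again with closed kernel, so that $C_\perp$ is closed.

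The inclusions are immediate from unwinding the definitions. For $(A^\perp)_\perp \supset A$, I take an arbitrary $a \in A$ and check membership in $(A^\perp)_\perp = \{b \in \B : \langle b, b'\rangle = 0 \text{ for all } b' \in A^\perp\}$: by the very definition of $A^\perp$, every $b' \in A^\perp$ satisfies $\langle a, b'\rangle = 0$, hence $a \in (A^\perp)_\perp$. The argument for $(C_\perp)^\perp \supset C$ is identical with the roles of $\B$ and $\B'$ exchanged, since any $c \in C$ is annihilated by every $b \in C_\perp$ and therefore lies in $(C_\perp)^\perp$.

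There is essentially no serious obstacle here; the only point that requires care is the passage from joint continuity of $\langle\cdot,\cdot\rangle$ to continuity of the one-variable evaluations, which I handle via the bound above, together with keeping track of which Banach space each annihilator lives in so that the closedness statements are asserted in the correct topology.
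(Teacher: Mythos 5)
Your proof is correct and follows essentially the same route as the paper's: closedness via writing each annihilator as an intersection of kernels of the continuous partial evaluations $\langle a,\cdot\rangle$ (resp.\ $\langle\cdot,c\rangle$), and the double-annihilator inclusions by unwinding the definitions. You also rightly note that, given the definitions, the memberships should read $A^\perp \subset \B'$ and $C_\perp \subset \B$ (the lemma statement has the spaces swapped), and you prove closedness in the correct spaces.
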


\begin{proof}
	By continuity of the bilinear form it follows that the set
	\begin{equation*}
		 A^\perp := \{ b' \in \B' : \langle a,b'\rangle = 0 \text{ for all } a \in A\} \bigcap\limits_{a \in A} (\langle a,\cdot\rangle)^{-1}(\{0\})
	\end{equation*}
	is closed. And similarly for $C$. For any element $a \in A$ we have by definition $\langle a,a^\perp\rangle$ for all $a^\perp \in A^\perp$, i.e. $a \in (A^\perp)_\perp$. A similar argument shows the corresponding statement for $C$.
\end{proof}

\begin{definition}[Hahn-Banach property]
	We say a triple $(\B,\B',\langle\cdot,\cdot\rangle)$ of Banach spaces $\B$ and $\B'$ and a continuous bilinear form $\langle\cdot,\cdot\rangle$ has the Hahn-Banach property if for all subspaces $A \subset \B$ we have $(A^\perp)_\perp = \overline{A}$.
\end{definition}

\begin{corollary}\label{cor:HBProp}
	If $(\B,\B',\langle\cdot,\cdot\rangle)$ has the Hahn-Banach property then $\B'$ is dense in $\B'$ with respect to $\langle\cdot,\cdot\rangle$.
\end{corollary}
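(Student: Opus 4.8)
The plan is to unwind the two definitions involved and reduce the statement to a single instance of the Hahn-Banach property, applied to the trivial subspace $A = \{0\} \subset \B$. First I would recall what the conclusion means: ``$\B'$ is dense in $\B'$ with respect to $\langle\cdot,\cdot\rangle$'' is the instance $W' = \B'$ of condition (\ref{equ:DenseB'}), namely that $\langle v,w'\rangle = 0$ for all $w' \in \B'$ forces $v = 0$. In the annihilator notation of (\ref{equ:Annihilatorperp}) this says exactly $(\B')_\perp = \{0\}$, so it suffices to show that the subspace $(\B')_\perp \subset \B$ is trivial.

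The key step is then to exhibit $\B'$ itself as an annihilator. Taking $A = \{0\} \subset \B$ in the definition (\ref{equ:AnnihilatorB}), every $b' \in \B'$ trivially satisfies $\langle 0,b'\rangle = 0$, so $\{0\}^\perp = \B'$. Now I would invoke the Hahn-Banach property for the subspace $A = \{0\}$, which gives $(\{0\}^\perp)_\perp = \overline{\{0\}} = \{0\}$. Chaining the two identities yields $(\B')_\perp = (\{0\}^\perp)_\perp = \{0\}$, which is precisely the density condition (\ref{equ:DenseB'}) with $W' = \B'$ that we wanted.

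There is essentially no analytic obstacle here; the whole content is bookkeeping. The only points that need a moment of care are (i) correctly translating ``$\B'$ dense in $\B'$'' into the annihilator equality $(\B')_\perp = \{0\}$, keeping the two density conditions (\ref{equ:DenseB})--(\ref{equ:DenseB'}) and the two annihilator operations $(\cdot)^\perp$ and $(\cdot)_\perp$ straight, and (ii) observing that the Hahn-Banach property, although stated for general subspaces $A \subset \B$, is in particular valid for the trivial subspace $\{0\}$. Once these are noted, the proof is a two-line computation.
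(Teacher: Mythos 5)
Your proof is correct and is essentially identical to the paper's: both arguments observe that $\{0\}^\perp = \B'$ and apply the Hahn-Banach property to the trivial subspace to get $(\B')_\perp = (\{0\}^\perp)_\perp = \overline{\{0\}} = \{0\}$, which is exactly the density condition (\ref{equ:DenseB'}) for $W' = \B'$. No further comments are needed.
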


\begin{proof}
	If $\langle b,b'\rangle = 0$ for all $b' \in \B'$ that means $b \in \B'_\perp$. Since $\B' = \{0\}^\perp$ we get by the Hahn-Banach property $b \in \B'_\perp = (\{0\}^\perp)_\perp = \overline{\{0\}} = \{0\}$, i.e. $b = 0$. That means $\B'$ is dense in $\B'$ with respect to $\langle\cdot,\cdot\rangle$ according to (\ref{equ:DenseB'}).
\end{proof}

%

\section{Adjoint operators}\label{app:AdjointOperators}
We recall the notion of an adjoint operator. Let $B$ and $B'$ be Banach spaces with continuous bilinear form $\langle \cdot,\cdot\rangle:\B \times \B' \rightarrow \mathbb{C}$ and $T$ be a densely defined (with respect to the bilinear form as in (\ref{equ:DenseB}) operator $T:B\supset D(T)\rightarrow B$. We call $T'$ the adjoint operator of $T$ (with respect to the bilinear from) if
\begin{equation}
    \langle Tx,y\rangle = \langle x,T'y\rangle
\end{equation}
for all $x \in D(T)$ and $y \in D(T')$ for
\begin{equation*}
    D(T') := \{y \in Y : \exists z \in Z \text{ with } \langle Tx,y\rangle = \langle x,z\rangle \text{ for all } x \in D(T)\}
\end{equation*}
We recall the notion of a closed operator. Let $X$ and $Y$ be Banach spaces. A operator $(A,D(A))$ for a linear subspace $D(A) \subset X$ with $A:D(A) \rightarrow Y$ is called closed (with respect to a topology $\mathcal{T}$, in this text that is the weak, weak* or norm topology) if
\begin{equation*}
    D(A) \ni x_n \rightarrow x \text{ and } Ax_n \rightarrow y \text{ implies } x \in D(A) \text{ and } Ax = y
\end{equation*}
where the limits are with respect to $\mathcal{T}$. The operator $A:\B \supset D(A) \rightarrow \B$ is called closed with respect to $\langle \cdot,\cdot\rangle$ if $D(A) \ni \langle x_n,z_1\rangle \rightarrow \langle x,z_1\rangle$ and $\langle Ax_n,z_2\rangle \rightarrow \langle y,z_2\rangle$ for all $z_1,z_2 \in X$ implies $x \in D(A)$ and  $Ax = y$. The definition of an operator $A:\B' \supset D(A) \rightarrow \B'$ being closed with respect to $\langle \cdot,\cdot\rangle$ is analogue. We call an operator $(A,D(A))$ closable if it has an extension $(\bar{A},D(\bar{A}))$ which is closed, i.e. $(\bar{A},D(\bar{A}))$ is a closed operator with $\bar{A} \supset A \text{ which denotes } D(A) \subset D(\bar{A}) \text{ and } \bar{A}x = Ax \text{ for all } x \in D(A)$.

We will state the following lemma which is well known for the case when $\langle \cdot,\cdot\rangle$ induces a Hilbert space.

\begin{lemma}\label{Lem:AdjointClosed}
    Let $T:B \supset D(T) \rightarrow B$ be a densely defined linear operator. Then $T'$ is uniquely defined and a closed (with respect to the norm topology as well with respect to $\langle\cdot,\cdot\rangle$) operator.
\end{lemma}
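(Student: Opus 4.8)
The plan is to verify, in order, three claims: that the assignment $y \mapsto T'y$ is unambiguous, that it is linear, and that the resulting operator is closed both in the norm topology and with respect to $\langle\cdot,\cdot\rangle$. The only structural inputs beyond the definitions are that $D(T)$ is dense with respect to $\langle\cdot,\cdot\rangle$ in the sense of (\ref{equ:DenseB}) and that the bilinear form is continuous.

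First I would settle well-definedness. Fix $y \in D(T')$ and suppose $z_1, z_2 \in \B'$ both satisfy $\langle Tx, y\rangle = \langle x, z_i\rangle$ for every $x \in D(T)$. Subtracting gives $\langle x, z_1 - z_2\rangle = 0$ for all $x \in D(T)$, and since $D(T)$ is dense with respect to $\langle\cdot,\cdot\rangle$, property (\ref{equ:DenseB}) forces $z_1 = z_2$. Hence $T'y := z$ is well defined. This is exactly the step where density of the domain is indispensable: without it the adjoint would be multivalued. Linearity of $T'$ then follows at once from the bilinearity of $\langle\cdot,\cdot\rangle$ together with the same uniqueness argument.

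The core of the argument is closedness with respect to $\langle\cdot,\cdot\rangle$. Here I would take a sequence $y_n \in D(T')$ with $\langle b, y_n\rangle \to \langle b, y\rangle$ and $\langle b, T'y_n\rangle \to \langle b, w\rangle$ for every $b \in \B$, and aim to show $y \in D(T')$ with $T'y = w$. The decisive computation is to test against an arbitrary $x \in D(T)$: starting from the defining relation $\langle Tx, y_n\rangle = \langle x, T'y_n\rangle$ and passing to the limit on both sides (which is legitimate because both $Tx$ and $x$ lie in $\B$ and are therefore admissible test elements) yields $\langle Tx, y\rangle = \langle x, w\rangle$ for all $x \in D(T)$. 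By the definition of $D(T')$ this says precisely that $y \in D(T')$, and by the uniqueness already established, $T'y = w$.

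Finally, closedness in the norm topology is a corollary. Since the bilinear form is continuous, the norm convergences $y_n \to y$ and $T'y_n \to w$ imply $\langle b, y_n\rangle \to \langle b, y\rangle$ and $\langle b, T'y_n\rangle \to \langle b, w\rangle$ for all $b \in \B$, so the hypotheses of the bilinear-form closedness are met and the same conclusion applies; in effect, $\langle\cdot,\cdot\rangle$-closedness is the stronger statement because its convergence notion is weaker than norm convergence. I do not expect a genuine obstacle — this is the standard proof that an adjoint is closed, transported to the bilinear-form setting. The one point that truly requires care is bookkeeping which space each test element inhabits, so that every pairing $\langle Tx, y_n\rangle$ and $\langle x, T'y_n\rangle$ is well-formed and every limit passage is justified by the correct convergence hypothesis.
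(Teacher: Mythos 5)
Your proposal is correct and follows essentially the same route as the paper's proof: uniqueness of $T'y$ from density of $D(T)$ with respect to $\langle\cdot,\cdot\rangle$, closedness with respect to the bilinear form by passing to the limit in $\langle Tx,y_n\rangle=\langle x,T'y_n\rangle$, and norm closedness as a consequence since norm convergence implies the weaker $\langle\cdot,\cdot\rangle$-convergence. No gaps.
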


\begin{proof}
    For all $y \in D(T')$ we have for all $x \in D(T)$ that $\langle Tx,y\rangle = \langle x,T'y\rangle$. Since $D(T)$ is dense in $\B$ with respect to $\langle\cdot,\cdot\rangle$ the element $T'y$ is uniquely determined. To check closedness it suffices to show that $T'$ is closed with respect to $\langle\cdot,\cdot\rangle$ because the norm topology is stronger than the one induced by $\langle\cdot,\cdot\rangle$. Let $x_n \in D(T')$ with $\langle z_1,x_n\rangle \rightarrow \langle z_1,x\rangle$ and $\langle z_2,T'x_n\rangle \rightarrow \langle z_2,y\rangle$ for all $z_1,z_2 \in \B$. For all $v \in D(T)$ we have
    \begin{eqnarray}
    \langle v,y\rangle & \leftarrow & \langle v,T'x_n\rangle = \langle Tv,x_n\rangle \rightarrow \langle Tv,x\rangle.
    \end{eqnarray}
    Since $T$ is densely defined (with respect to $\langle\cdot,\cdot\rangle$) it follows $x \in D(T')$ and $T'x = y$.
\end{proof}

\begin{lemma}\label{lem:DenselyDefAdjoint}
    Let $T:B \supset D(T) \rightarrow B$ be a densely defined operator. If $T'$ is densely defined then $T$ is closable.
\end{lemma}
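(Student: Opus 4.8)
The plan is to exhibit a closed extension of $T$ by passing to the second adjoint $T'' := (T')'$. Since $T$ is densely defined with respect to $\langle\cdot,\cdot\rangle$, its adjoint $T':\B'\supset D(T')\to\B'$ exists and is unique by Lemma \ref{Lem:AdjointClosed}. The hypothesis that $T'$ is itself densely defined with respect to $\langle\cdot,\cdot\rangle$ is exactly what is needed to form the adjoint of $T'$ in turn: interpreting $T'$ as a densely defined operator on $\B'$ and pairing via $\langle\cdot,\cdot\rangle$ from the other side, the analogue of Lemma \ref{Lem:AdjointClosed} for operators acting on $\B'$ yields that $T''=(T')'$ is a well-defined, single-valued operator $\B\supset D(T'')\to\B$ which is closed with respect to $\langle\cdot,\cdot\rangle$ (hence also with respect to the norm topology).

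It then remains to check that $T''$ extends $T$. First I would recall that $T''$ is characterised by the relation $\langle T''x, y\rangle = \langle x, T'y\rangle$ for $x \in D(T'')$ and $y \in D(T')$, with $D(T'')$ consisting of those $x \in \B$ for which there exists $w \in \B$ with $\langle w, y\rangle = \langle x, T'y\rangle$ for all $y \in D(T')$. Now fix $x \in D(T)$. For every $y \in D(T')$ the defining property of the adjoint $T'$ gives $\langle x, T'y\rangle = \langle Tx, y\rangle$. Thus the element $w := Tx$ witnesses $x \in D(T'')$ and $T''x = Tx$, so $T \subset T''$. Since $T''$ is a closed extension of $T$, the operator $T$ is closable, which is the claim.

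The point I expect to require the most care is the well-definedness (single-valuedness) of $T''$: the witness $w$ above must be unique. This is precisely where the hypothesis that $T'$ is densely defined enters — if $\langle w_1, y\rangle = \langle w_2, y\rangle$ for all $y \in D(T')$, then $\langle w_1 - w_2, y\rangle = 0$ for all $y \in D(T')$, and density of $D(T')$ with respect to $\langle\cdot,\cdot\rangle$ (condition (\ref{equ:DenseB'})) forces $w_1 = w_2$. The remaining delicate bookkeeping is to confirm that the construction of the adjoint and the closedness conclusion of Lemma \ref{Lem:AdjointClosed} transfer verbatim to operators acting on $\B'$ (the excerpt already asserts that these definitions and statements are symmetric in $\B$ and $\B'$). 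Once those two points are in place, the extension property $T \subset T''$ is immediate from the definition of the adjoint, and no further estimates are needed.
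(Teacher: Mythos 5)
Your argument is correct and follows essentially the same route as the paper: form the second adjoint $T''=(T')'$ with respect to the reversed bilinear form, invoke the closedness of adjoints (Lemma \ref{Lem:AdjointClosed}), and observe that $\langle Tx,y\rangle=\langle x,T'y\rangle$ for $y\in D(T')$ exhibits $T''$ as a closed extension of $T$. Your extra remark on why density of $D(T')$ is needed for single-valuedness of $T''$ is a point the paper leaves implicit, but the proof is the same.
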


\begin{proof}
If $T'$ is densely defined we can build its adjoint with respect to the bilinear form $\langle \cdot,\cdot\rangle' : \B' \times \B \rightarrow \mathbb{K}$ defined by $\langle b',b\rangle := \langle b,b'\rangle$. Then the adjoint $T''$ of $T'$ is closed by Lemma \ref{Lem:AdjointClosed}. We claim that $T''$ is an extension of $T$. Let $x \in D(T)$, i.e. we have for each $y \in D(T')$ that $\langle Tx,y\rangle = \langle x,T'y\rangle = \langle T'y, x\rangle'$. But this exactly states that $x \in D(T'')$ with $T''x = Tx$. 
\end{proof}

The converse result is a classical result in Hilbert spaces. We present the adapted proof here in several steps.

We denote by $\Gamma(T) := \{(x,Tx) : x \in D(T)\}$ the graph of $T$. Further we define the following function $V$ on vector spaces $X,Y$.
\begin{equation}\label{equ:DefV}
    V:X \times Y \rightarrow Y \times X, V(x,y) := (-y,x).
\end{equation}

\begin{remark}
	Let $\B$ and $\B'$ be Banach spaces and $\langle\cdot,\cdot\rangle:\B \times \B' \rightarrow \mathbb{K}$ be a continuous bilinear form. Then we have a natural bilinear form $\langle\cdot,\cdot\rangle_s:(\B \times \B) \times (\B' \times \B') \rightarrow \mathbb{K}$ defined by
	\begin{equation}\label{DefBilinearSquare}
		\langle (b_1,b_2),(b_1',b_2')\rangle_s := \langle b_1,b_1'\rangle + \langle b_2,b_2'\rangle.
	\end{equation}
\end{remark}

\begin{lemma}\label{lem:GrVPerp}
    We have $\Gamma (T') = V(\Gamma (T)^\perp)$ with respect to the bilinear form $\langle\cdot,\cdot\rangle_s$.
\end{lemma}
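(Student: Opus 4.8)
The plan is to prove the set identity by double inclusion, but in fact both inclusions will drop out of a single chain of equivalences once every object is made explicit. The four ingredients to unwind are the graph $\Gamma(T)$, the annihilator taken with respect to $\langle\cdot,\cdot\rangle_s$ from (\ref{DefBilinearSquare}), the map $V$, and the definition of the adjoint $T'$ recalled in this appendix. Since $\Gamma(T)\subset\B\times\B$, its annihilator lives in $\B'\times\B'$, and the map $V$ is applied with $X=Y=\B'$, so that $V(b_1',b_2')=(-b_2',b_1')$; both $V(\Gamma(T)^\perp)$ and $\Gamma(T')$ are therefore subsets of $\B'\times\B'$, which is the common ambient space in which the equality is to be checked.

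First I would record what the annihilator of the graph means. A pair $(b_1',b_2')\in\B'\times\B'$ lies in $\Gamma(T)^\perp$ precisely when $\langle(x,Tx),(b_1',b_2')\rangle_s=0$ for every $x\in D(T)$, i.e.
\[
\langle x,b_1'\rangle + \langle Tx,b_2'\rangle = 0 \quad\text{for all } x\in D(T).
\]
Next I would rewrite this as $\langle Tx,b_2'\rangle = \langle x,-b_1'\rangle$ for all $x\in D(T)$ and observe that this is exactly the defining condition for $b_2'\in D(T')$ with $T'b_2'=-b_1'$. Thus membership in $\Gamma(T)^\perp$ is equivalent to the two simultaneous conditions $b_2'\in D(T')$ and $b_1'=-T'b_2'$.

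Then I would apply $V$. Writing $y:=-b_2'$, the equivalence from the previous step gives $y\in D(T')$ and $T'y=-T'b_2'=b_1'$, whence $V(b_1',b_2')=(-b_2',b_1')=(y,T'y)\in\Gamma(T')$; this yields $V(\Gamma(T)^\perp)\subset\Gamma(T')$. Reading the same chain backwards establishes the reverse inclusion: starting from an arbitrary $(y,T'y)\in\Gamma(T')$, the adjoint relation $\langle Tx,y\rangle=\langle x,T'y\rangle$ rearranges to $\langle x,T'y\rangle+\langle Tx,-y\rangle=0$, so $(T'y,-y)\in\Gamma(T)^\perp$ and $V(T'y,-y)=(y,T'y)$. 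Since $V$ is a bijection, the two inclusions give the claimed equality.

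I expect no genuine obstacle here; the proof is purely a matter of unwinding definitions. The only point demanding care is the sign bookkeeping introduced by $V$ and by the two slots of $\langle\cdot,\cdot\rangle_s$, so that the term $-b_1'$ ends up in the correct argument of the adjoint pairing and the minus sign in $V(x,y)=(-y,x)$ is matched against it consistently.
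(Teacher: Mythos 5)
Your proof is correct and follows essentially the same route as the paper's: both are a direct unwinding of the definitions of $\Gamma(T)^\perp$ under $\langle\cdot,\cdot\rangle_s$, of $V$, and of the adjoint, with the only difference being that you characterize $\Gamma(T)^\perp$ first and then push it through $V$, while the paper runs the equivalent chain starting from $(x,y)\in\Gamma(T')$. Your sign bookkeeping (the identification $b_2'\in D(T')$, $T'b_2'=-b_1'$, and $V(b_1',b_2')=(-b_2',b_1')$) checks out, so nothing further is needed.
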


\begin{proof}
    We have for $x,y \in B'$
    \begin{eqnarray*}
        (x,y) \in \Gamma (T') & \Leftrightarrow & \langle x,Tz\rangle = \langle y,z\rangle \text{ for all } z \in D(T)\\
        & \Leftrightarrow & \langle x,Tz\rangle - \langle y,w\rangle = 0 \text{ for all } (z,w) \in\Gamma (T)\\
        & \Leftrightarrow & \langle (x,y),(-w,z) \rangle = 0 \text{ for all } (z,w) \in \Gamma (T)\\
        & \Leftrightarrow & (x,y) \in V(\Gamma (T)^\perp)
    \end{eqnarray*}
\end{proof}

For Hilbert spaces the previous lemma is used in the proof of the statement that the adjoint of a closed densely defined operator is densely defined. The main argument makes use of the Hahn-Banach theorem which is particularly used to show that $\Gamma(T)^{\perp\perp} = \overline{\Gamma(T)}$, which is why we restrict to RKBS with the Hahn-Banach property. One of such RKBS are reflexive RKBS.

\begin{definition}[Reflexive RKBS]\label{def:RKBSReflexive}
	Let $\B$ and $\B'$ be Banach spaces and $\langle\cdot,\cdot\rangle$ be a continuous bilinear form on $\B \times \B'$. We call $(\B,\B',\langle\cdot,\cdot\rangle)$ a dual pairing if $\B'$ is isomorphic to $\B^*$ via the map $b' \mapsto \langle \cdot,b'\rangle$. We call $(\B,\B',\langle\cdot,\cdot\rangle)$ reflexive if it is a dual pairing and $\B$ is reflexive.
\end{definition}

\begin{remark}
    If the RKBS is given by an RKHS then the RKBS is reflexive according to Definition \ref{def:RKBSReflexive}.
\end{remark}

\begin{lemma}\label{lem:Reflexive}
	If $(\B,\B',\langle\cdot,\cdot\rangle)$ is a dual pairing then it has the Hahn-Banach property. If $(\B,\B',\langle\cdot,\cdot\rangle)$ is reflexive then $(\B',\B,\langle\cdot,\cdot\rangle')$ is reflexive for $\langle b',b\rangle' := \langle b,b'\rangle$, and $(\B \times \B,\B '\times \B',\langle\cdot,\cdot\rangle_s)$ is relfexive as well, where $\langle\cdot,\cdot\rangle_s$ denotes the bilinear form (\ref{DefBilinearSquare}).
	\end{lemma}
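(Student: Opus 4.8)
The plan is to establish the three assertions in turn, using throughout the map $\phi:\B' \to \B^*$, $h \mapsto \langle\cdot,h\rangle$ from (\ref{eq:DefInducedMapB'B*}), which by the definition of a dual pairing is a topological isomorphism. The organising observation is that $\phi$ carries the bilinear-form annihilators (\ref{equ:AnnihilatorB}) and (\ref{equ:Annihilatorperp}) onto the classical annihilators in $\B^*$; the first claim then reduces to the standard Hahn-Banach consequence, and the last two reduce to the standard facts that the dual of a reflexive space and a finite product of reflexive spaces are again reflexive.

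For the Hahn-Banach property, fix a subspace $A \subset \B$. First I would show $\phi(A^\perp) = A^\circ$, where $A^\circ := \{f \in \B^* : f(a) = 0 \text{ for all } a \in A\}$ is the classical annihilator: indeed $b' \in A^\perp$ says $\langle a, b'\rangle = \phi(b')(a) = 0$ for all $a \in A$, i.e. $\phi(b') \in A^\circ$, and bijectivity of $\phi$ gives equality. Tracing $(A^\perp)_\perp$ through $\phi$ in the same way, for $b \in \B$ the condition $\langle b, a'\rangle = 0$ for all $a' \in A^\perp$ becomes $f(b) = 0$ for all $f \in \phi(A^\perp) = A^\circ$, so $(A^\perp)_\perp = {}^\circ(A^\circ)$, the pre-annihilator of $A^\circ$. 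The classical Hahn-Banach theorem gives ${}^\circ(A^\circ) = \overline{A}$, hence $(A^\perp)_\perp = \overline{A}$ and the Hahn-Banach property holds.

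For the second claim, assume $(\B,\B',\langle\cdot,\cdot\rangle)$ is reflexive. Then $\B' \cong \B^*$ is reflexive, being the dual of the reflexive space $\B$, so it remains to check that $(\B',\B,\langle\cdot,\cdot\rangle')$ is a dual pairing, i.e. that $b \mapsto \langle\cdot,b\rangle'$ is an isomorphism $\B \to (\B')^*$. Composing the canonical isomorphism $J:\B \to \B^{**}$ from (\ref{eq:ReflexiveMap}) with the adjoint isomorphism $\phi^*:\B^{**} \to (\B')^*$, the resulting map sends $b$ to the functional whose value at $b'$ is $J(b)(\phi(b')) = \phi(b')(b) = \langle b,b'\rangle = \langle b',b\rangle'$; this is exactly the required map and, as a composition of isomorphisms, is one. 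Hence $(\B',\B,\langle\cdot,\cdot\rangle')$ is reflexive. The third claim is proved identically: $\B \times \B$ is reflexive as a finite product of reflexive spaces, and under the canonical identification $(\B \times \B)^* \cong \B^* \times \B^*$ together with the isomorphism $\phi \times \phi:\B' \times \B' \to \B^* \times \B^*$, the map $(b_1',b_2') \mapsto \langle\cdot,(b_1',b_2')\rangle_s$ becomes $(b_1,b_2) \mapsto \phi(b_1')(b_1) + \phi(b_2')(b_2) = \langle(b_1,b_2),(b_1',b_2')\rangle_s$, i.e. the canonical isomorphism; thus $(\B \times \B,\B' \times \B',\langle\cdot,\cdot\rangle_s)$ is a dual pairing and therefore reflexive.

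I expect the only real obstacle to be the careful bookkeeping in the first claim: matching the two flavours of annihilator (the bilinear-form $(\cdot)^\perp$ and $(\cdot)_\perp$ against the classical $A^\circ$ and its pre-annihilator ${}^\circ(\cdot)$) under $\phi$, and invoking precisely the Hahn-Banach consequence ${}^\circ(A^\circ) = \overline{A}$ valid for an arbitrary subspace $A \subset \B$. The reflexivity arguments are then routine once the isomorphisms are written out; the one point requiring attention is to confirm that the composite isomorphisms agree with the specific maps singled out in Definition \ref{def:RKBSReflexive}.
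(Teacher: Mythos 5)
Your proof is correct and follows essentially the same route as the paper: both hinge on the dual-pairing isomorphism $\phi:\B'\to\B^*$ to convert the bilinear-form annihilators into classical ones (the paper runs the Hahn--Banach separation argument directly by contradiction, while you outsource it to the standard identity ${}^\circ(A^\circ)=\overline{A}$, which is the same application of Hahn--Banach), and both reduce the two reflexivity claims to the standard facts about duals and finite products of reflexive spaces via the composite isomorphisms $J$ and $\phi^*$. Your packaging of the second claim as a single composition of isomorphisms is slightly tidier than the paper's separate boundedness/injectivity/surjectivity check, but the substance is identical.
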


\begin{proof}
	To show the Hahn-Banach property it follows from Lemma \ref{lem:Aperp} that we only need to show $(A^\perp)_\perp \subset \overline{A}$ for any subspace $A$. Assume there exists a $x \in (A^\perp)_\perp \setminus \overline{A}$. By the Hahn-Banach theorem we can find a $b^* \in\B^*$ with $b^*\big|_{\overline{A}} = 0$ and $b^*(x) = 1$. If $(\B,\B',\langle\cdot,\cdot\rangle,k)$ is a dual pairing there exists a $b' \in \B'$ with $b^*(b) = \langle b,b'\rangle$ for all $b \in \B$. In particular we get for all $a \in A$ that $\langle a,b'\rangle = b^*(a) = 0$, i.e. $b^* \in A^\perp$. But since $x \in (A^\perp)_\perp$ it follows $0 = \langle x,b'
	\rangle$ which contradicts $\langle x,b'\rangle = b^*(x) = 1$. Now assume that $(\B,\B',\langle\cdot,\cdot\rangle,k)$ is reflexive. That means that $\B$ is reflexive and hence $\B'$ is reflexive. So it remains to show that the map
\begin{equation}\label{equ:EmbeddingBB'*}	
	e:\B \rightarrow (\B')^*, \; 
	b \mapsto \langle \cdot,b\rangle' = \langle b,\cdot\rangle \in (\B')^*
\end{equation}
is isomorphic. Due to boundedness of the bilinear form the map $e$ is bounded. Further it is injective due to the Hahn-Banach theorem and the assumption that $b' \mapsto \langle \cdot, b'\rangle$ is an isomorphism between $\B'$ and $\B^*$. It is also surjective by this reason, because due to the isomorphism between $\B'$ and $\B^*$ we have that $(\B')^* \cong \B^{**} \cong \B$ since $\B$ is reflexive. Hence let $y \in (\B')^*$, i.e. we can find a $b\in \B$ such that we can represent $y$ by $y = \langle b,\cdot\rangle = \langle \cdot,b\rangle'$, which is what remained to be shown. For the last statement note that we have $(V\times W)^* \cong V^* \times W^*$ for any topological vector spaces. So if $\B$ is reflexive so is $\B \times \B$ and any element $y \in (\B \times \B)^*$ can be written as $y = b_1^* + b_2^*$ for elements $b_1^*,b_2^* \in \B^*$, namely by $y(b_1,b_2) = y(b_1,0) + y(0,b_2) =: b_1^*(b_1) + b_2^*(b_2)$. Since $\B^* \cong \B'$ there exist elements $b_1',b_2' \in \B'$ with $b_i^* = \langle \cdot,b_i'\rangle$ for $i = 1,2$. In total we get
$y(b_1,b_2) = b_1^*(b_1) + b_2^*(b_2) = \langle b_1,b_1'\rangle + \langle b_2,b_2'\rangle = \langle (b_1,b_2),(b_1',b_2')\rangle_2$ for all $(b_1,b_2) \in \B$. So we see that $(b_1',b_2') \mapsto \langle \cdot,(b_1',b_2')\rangle$ defines a (continuous) surjective map from $\B'\times \B'$ to $(\B \times \B)^*$. Injectivity again follows from $\B$ being reflexive and the Hahn-Banach theorem.
\end{proof}

\begin{proposition}\label{prop:AdjointDenselyDefined}
	If $(\B,\B',\langle\cdot,\cdot\rangle)$ is reflexive and $T:D(T) \subset \B\rightarrow \B$ be a densely defined closable operator then $T'$ is densely defined.
\end{proposition}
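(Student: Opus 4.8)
The plan is to mimic the classical Hilbert-space argument that the adjoint of a densely defined closable operator is densely defined, using the graph calculus already prepared in Lemma \ref{lem:GrVPerp}. By (\ref{equ:DenseB'}), saying that $T'$ is densely defined means that $D(T') \subset \B'$ is dense with respect to $\langle\cdot,\cdot\rangle$; concretely, I must show that whenever $v \in \B$ satisfies $\langle v, y\rangle = 0$ for all $y \in D(T')$, then $v = 0$. So I would fix such a $v$ and aim to force it to vanish.

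First I would translate the orthogonality of $v$ to $D(T')$ into a statement about the product pairing $\langle\cdot,\cdot\rangle_s$ from (\ref{DefBilinearSquare}). Consider the pair $(v, 0) \in \B \times \B$. For every $y \in D(T')$ one has $\langle (v,0), (y, T'y)\rangle_s = \langle v, y\rangle + \langle 0, T'y\rangle = 0$, so $(v,0)$ lies in the annihilator $\Gamma(T')_\perp$ of the graph of $T'$. By Lemma \ref{lem:GrVPerp}, $\Gamma(T') = V(\Gamma(T)^\perp)$ with $V$ from (\ref{equ:DefV}), hence $(v,0) \in (V(\Gamma(T)^\perp))_\perp$.

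Next I would exploit that $V$ is skew with respect to $\langle\cdot,\cdot\rangle_s$, i.e. $\langle V a, b\rangle_s = -\langle a, V b\rangle_s$ for $a \in \B\times\B$ and $b \in \B'\times\B'$, which is an immediate computation from the definitions. This lets me move $V$ across the pairing: from $\langle (v,0), V s\rangle_s = 0$ for all $s \in \Gamma(T)^\perp$ I obtain $\langle V(v,0), s\rangle_s = 0$ for all such $s$, that is $V(v,0) = (0, v) \in (\Gamma(T)^\perp)_\perp$. Since $(\B,\B',\langle\cdot,\cdot\rangle)$ is reflexive, Lemma \ref{lem:Reflexive} guarantees that the product triple $(\B\times\B, \B'\times\B', \langle\cdot,\cdot\rangle_s)$ is reflexive, hence a dual pairing, hence enjoys the Hahn-Banach property. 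As $T$ is linear, $\Gamma(T)$ is a linear subspace of $\B\times\B$, so the Hahn-Banach identity $(\Gamma(T)^\perp)_\perp = \overline{\Gamma(T)}$ applies, the closure being the norm closure in $\B \times \B$. Thus $(0, v) \in \overline{\Gamma(T)}$.

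Finally I would invoke closability: since $T$ is closable, its norm closure $\overline{\Gamma(T)}$ is the graph of a single-valued linear operator, which by definition means that $(0, w) \in \overline{\Gamma(T)}$ forces $w = 0$. Applied to $w = v$ this yields $v = 0$, completing the proof that $D(T')$ is dense in $\B'$. The only genuinely delicate points are bookkeeping: verifying the skew relation for $V$ on the correct factors, and making sure that the closure appearing in $(\Gamma(T)^\perp)_\perp = \overline{\Gamma(T)}$ is exactly the norm closure whose being a graph encodes closability. Both are routine once the product pairing is shown reflexive via Lemma \ref{lem:Reflexive}, which is precisely where the reflexivity hypothesis is used.
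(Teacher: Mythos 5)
Your proof is correct and follows essentially the same route as the paper's: both rest on Lemma \ref{lem:GrVPerp}, the interaction of $V$ with annihilators, the Hahn--Banach property of the product pairing supplied by Lemma \ref{lem:Reflexive}, and closability of $T$ to conclude that $(0,v)\in\overline{\Gamma(T)}$ forces $v=0$. The only differences are cosmetic: you argue directly for density with respect to $\langle\cdot,\cdot\rangle$ and move $V$ across the pairing via its skewness, while the paper argues by contradiction through a Hahn--Banach functional and the identity $V(A_\perp)=(VA)_\perp$ --- the underlying computation is identical.
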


\begin{proof}
	By Lemma \ref{lem:GrVPerp} we have $\Gamma(T') = (V\Gamma(T))^\perp$. Next we will show that $V$ commutes with the annihilator, i.e. $V(A_\perp) = (VA)_\perp$ for any $A \subset \B' \times \B'$. We have $(b_1,b_2) \in (VA)_\perp$ if and only if $0 = \langle (b_1,b_2),(a_1,a_2)\rangle$ for all $(a_1,a_2) \in A$, and we get
	\begin{eqnarray}\label{equ:VCommutesPerp}
		(b_1,b_2) \in (VA)_\perp & \Leftrightarrow & \langle (b_1,b_2),(-a_2,a_1)\rangle_s = 0 \; \; \; \forall (a_1,a_2) \in A\notag\\
		& \Leftrightarrow & 0 = -\langle b_1,a_2\rangle + \rangle b_2,a_1\rangle \; \; \; \forall (a_1,a_2) \in A\notag\\
		& \Leftrightarrow & 0 = \langle (-b_2,b_1),(a_1,a_2)\rangle\; \; \; \forall (a_1,a_2) \in A\notag \\
		& \Leftrightarrow & (-b_2,b_1) \in A_\perp\\
		& \Leftrightarrow & (b_1,b_2) \in V(A_\perp).  
	\end{eqnarray}
	Now assume $D(T')$ is not dense, by the Hahn-Banach theorem there exists a $b^* \in (\B')^* \setminus\{0\}$ with $b^* = 0$ on $D(T')$. Since $(\B,\B',\langle\cdot,\cdot\rangle)$ is reflexive there exists a $b\in \B\setminus \{0\}$ with $b^*(b') = \langle b,b'\rangle$ for all $b' \in \B$, in particular $\langle b,b'\rangle = 0$ for all $b' \in D(T')$, i.e. $b \in D(T')_\perp$. That implies $(b,0) \in \Gamma(T')_\perp$ and by Lemma \ref{lem:GrVPerp}
	\begin{align*}
		\Gamma(T')_\perp  &\overset{\text{Lemma \ref{lem:GrVPerp}}}{=} (V(\Gamma(T)^\perp))_\perp \overset{(\ref{equ:VCommutesPerp})}{=}V(\Gamma(T)^\perp_\perp)\\
		&\overset{\text{Lemma \ref{lem:Reflexive}}}{=} V(\overline{\Gamma(T)}).
	\end{align*}
	That shows that $(0,b) \in \overline{\Gamma(T)}$. Since $T$ is closable $\overline{\Gamma(T)}$ is contained in the graph $\Gamma(\bar{T})$ of the closure $\bar{T}$ of $T$, in particular we get $-b = \bar{T}0 = 0$, which contradicts $b \neq 0$.
\end{proof}

\section{Auxiliary lemma for measures}

The following Lemma was used in Proposition \ref{ElementaryPropertiesKoopmanRKHS} 5. to show that the Perron-Frobenius operator is not closed when the RKBS $\B$ on $X$ has the universal property and $X$ contains infinitely many points.

\begin{lemma}\label{AppendixLemmaLinearCombination}
    Let $X$ be compact and contain infinitely many elements. Then there exists an element in $\C(X)^*$ that is not a finite linear combination of dirac measures. In other words the set of dirac measures is not a basis of $\C(X)^*$.
\end{lemma}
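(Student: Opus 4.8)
The plan is to exhibit one explicit measure with infinitely many atoms and to rule it out of the span by comparing point masses. Since $X$ is infinite I would fix a sequence $(x_n)_{n\in\N}$ of pairwise distinct points of $X$ and set
\[
\mu := \sum_{n=1}^\infty 2^{-n}\,\delta_{x_n}.
\]
The first task is to verify that $\mu$ is a genuine element of $\C(X)^*=M(X)$. This is immediate: the partial sums are Cauchy in total-variation norm (the tail has norm $\sum_{n>N}2^{-n}\to 0$), so the series converges to a bounded Borel measure with $\|\mu\|\le 1$; equivalently, $g\mapsto\sum_n 2^{-n}g(x_n)$ defines a bounded linear functional on $\C(X)$, since $\bigl|\sum_n 2^{-n}g(x_n)\bigr|\le\|g\|_\infty$.

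Next I would compute the point masses and compare. Because the $x_n$ are pairwise distinct, $\mu(\{x_k\})=2^{-k}>0$ for every $k$. On the other hand, an arbitrary finite combination of Diracs $\nu=\sum_{i=1}^m a_i\delta_{y_i}$ is an atomic measure whose support lies in the finite set $\{y_1,\dots,y_m\}$, so $\nu(\{z\})=0$ for every $z\notin\{y_1,\dots,y_m\}$. As $\{x_n:n\in\N\}$ is infinite while $\{y_1,\dots,y_m\}$ is finite, there is an index $k$ with $x_k\notin\{y_1,\dots,y_m\}$, and then $\nu(\{x_k\})=0\ne 2^{-k}=\mu(\{x_k\})$. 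Hence $\mu\ne\nu$, and since $\nu$ was arbitrary, $\mu$ is not a finite linear combination of Dirac measures.

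The one step that deserves care -- and essentially the only place where compactness enters, via the Riesz--Markov identification $\C(X)^*\cong M(X)$ already adopted in the paper -- is the passage from ``$\mu$ equals a finite linear combination of Diracs as a functional on $\C(X)$'' to ``$\mu$ equals that atomic measure as a Borel measure,'' which is what makes the comparison of point masses legitimate. This is exactly the uniqueness in the Riesz--Markov representation; alternatively one can argue directly that distinct point evaluations are linearly independent in $\C(X)^*$ (by Urysohn's lemma, separate any $y_{i_0}$ from the remaining $y_i$ by a continuous function), so that every element of $\mathrm{Span}\{\delta_x:x\in X\}$ has a well-defined finite support and it suffices to note that $\mu$ does not. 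I do not expect a real obstacle here: the substance of the lemma is simply that $\mathrm{Span}\{\delta_x:x\in X\}$ contains only finitely supported measures, a property the infinite-atom measure $\mu$ manifestly violates.
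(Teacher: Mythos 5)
Your proof is correct and follows essentially the same route as the paper: both exhibit the measure $\mu=\sum_n 2^{-n}\delta_{x_n}$ and show it cannot lie in the span of finitely many Diracs. The only cosmetic difference is the final step — you compare point masses using the Riesz--Markov identification, while the paper tests against an explicit Urysohn function (the alternative you yourself mention); these are interchangeable.
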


\begin{proof}
    Let $(x_n)_{n \in \N} \subset X$ be a sequence of pairwise disjoint elements. Define
    \begin{equation}
        \mu := \sum\limits_{n = 1}^\infty \frac{1}{2^n} \delta_{x_n}.
    \end{equation}
    Since $\|\delta_{x_n}\| \leq 1$ the sum converges absolute and hence $\mu$ exists in $\C(X)'$.\\
    Assume $\mu$ is a finite linear combination of dirac measures. Then there exists a $m \in \N$ and $y_1,\ldots,y_m \in X$ and coefficients $a_1,\ldots,a_m \in \R$ with
    \begin{equation}\label{ProofAppendixMuDelta}
        \mu = \sum\limits_{k = 1}^m a_k \delta_{y_k}.
    \end{equation}
    Let $j_1 \in \N$ be the first index with $x_{j_1}\notin \{y_1,\ldots,y_m\}$. By Urysohn's lemma there exists a function $f \in \C(X)$ with the following properties $0\leq f \leq 1$, $f(y_k) = 0$ for all $k = 1,\ldots$ and $f(x_{j_1}) = 1$. Then we get the following contradiciton
    \begin{align*}
        0 &< \frac{1}{2^{j_1}} + \sum\limits_{n = j_1 + 1} \frac{1}{2^n} f(x_n) = \mu(f) = \left(\sum\limits_{k = 1}^m a_k \delta_{y_k}\right) (f)\\
        &=\sum\limits_{k = 1}^m a_k f(y_k) = 0. \qedhere
    \end{align*}
\end{proof}

\nocite{*}
\bibliography{refs_journal}
\end{document}